\numberwithin{equation}{section}
\newcommand{\Path}{\mathrm{Path}}
\newcommand{\Lie}{\mathrm{Lie}}
\newcommand{\Met}{\mathrm{Met}}
\newcommand{\Ker}{\mathrm{Ker}}
\newcommand{\Ad}{\mathrm{Ad}}
\newcommand{\ad}{\mathrm{ad}}
\newcommand{\id}{\mathrm{Id}}
\newcommand{\GL}{\mathrm{GL}}
\newcommand{\SL}{\mathrm{SL}}
\newcommand{\U}{\mathrm{U}}
\providecommand{\norm}[1]{\lVert#1\rVert}
\newcommand{\Id}{\mathrm{Id}}
\newcommand{\End}{\mathrm{End}}
\newcommand{\Hom}{\mathrm{Hom}}
\newcommand{\Aut}{\mathrm{Aut}}
\newcommand{\aut}{\mathrm{aut}}
\newcommand{\Conj}{\mathrm{Conj}}
\newcommand{\ssigma}{\pmb{\sigma}}
\newcommand{\sm}{\mathrm{ss}}
\newcommand{\irr}{\mathrm{irr}}
\newcommand{\K}{\mathrm{K}}
\newcommand{\hol}{\mathrm{hol}}
\newcommand{\Sym}{\mathrm{Sym}}
\newcommand{\Hh}{\mathcal{H}}
\newcommand{\Mm}{\mathcal{M}}
\newcommand{\Oo}{\mathcal{O}}
\newcommand{\Rr}{\mathcal{R}}
\newcommand{\CC}{\mathbb{C}}
\newcommand{\R}{\mathbb{R}}
\newcommand{\Z}{\mathbb{Z}}
\newcommand{\VV}{\mathbb{V}}
\newcommand{\EE}{\mathbb{E}}
\newcommand{\pP}{\mathfrak{p}}
\newcommand{\gG}{\mathfrak{g}}
\newcommand{\hH}{\mathfrak{h}}
\newcommand{\kK}{\mathfrak{k}}
\newcommand{\zZ}{\mathfrak{z}}
\newcommand{\lL}{\mathfrak{l}}
\newcommand{\uU}{\mathfrak{u}}
\newcommand{\mM}{\mathfrak{m}}
\newcommand{\lieg}{\mathfrak{g}}
\newcommand{\lieh}{\mathfrak{h}}
\newcommand{\liem}{\mathfrak{m}}
\newcommand{\AAA}{\curly{A}}
\newcommand{\CCC}{\curly{C}}
\newcommand{\GGG}{\curly{G}}
\newcommand{\HHH}{\curly{H}}
\newcommand{\KKK}{\curly{K}}
\newcommand{\SSS}{\curly{S}}
\newcommand{\TTT}{\curly{T}}
\newcommand{\XXX}{\curly{X}}
\newcommand{\PPP}{\curly{P}}
\newcommand{\Mg}{\mathcal{M}^{\operatorname{gauge}}}
\newcommand{\dbar}{\overline{\partial}}
\newcommand{\cM}{\mathcal{M}}
\newcommand{\calR}{\mathcal{R}}
\newcommand{\mapnormal}[5]{\begin{array}{cccc} #5:& #1 & \stackrel{ }{\longrightarrow} & #2 \\ \,\,\,\,&#3 & \longmapsto & #4 \end{array}}
\newcommand{\map}[5]{\begin{array}{ccc} #1 & \stackrel{#5}{\longrightarrow} & #2 \\ #3 & \longmapsto & #4 \end{array}}
\DeclareFontFamily{OT1}{rsfs}{}
 \DeclareFontShape{OT1}{rsfs}{n}{it}{<->rsfs10}{}
 \DeclareMathAlphabet{\curly}{OT1}{rsfs}{n}{it}
\newtheorem{The}{Theorem}[section]
\newtheorem{Lem}[The]{Lemma}
\newtheorem{Pro}[The]{Proposition}
\newtheorem{Cor}[The]{Corollary}
\newtheorem{Def}[The]{Definition}
\theoremstyle{remark}
\newtheorem{Rem}[The]{Remark}
\newtheorem{Exam}[The]{Example}
\begin{document}

\subjclass{53B35, 14F05}

\title[Real Higgs pairs and non-abelian Hodge correspondence]{Real Higgs pairs and 
non-abelian Hodge correspondence on a Klein surface}

\author[I. Biswas]{Indranil Biswas}

\address{Indranil Biswas, School of Mathematics, Tata Institute of Fundamental
Research, Homi Bhabha Road, Mumbai 400005, India}

\email{indranil@math.tifr.res.in}

\author[L. A. Calvo]{Luis \'Angel Calvo}

\address{Luis Angel Calvo, Universidad Pontificia de Comillas, ICAI, Alberto Aguilera, 25, 28015 Madrid, Spain}

\email{lacalvo@icai.comillas.edu}

\author[O. Garc\'{\i}a-Prada]{Oscar Garc\'{\i}a-Prada}

\address{Oscar Garc\'ia-Prada, Instituto de Ciencias Matem\'aticas, CSIC-UAM-UC3M-UCM, Nicol\'as
Cabrera, 13--15, Campus Cantoblanco, 28049 Madrid, Spain}

\email{oscar.garcia-prada@icmat.es}

\keywords{$G$-Higgs bundle, Hermite--Einstein--Higgs equation, non-abelian Hodge theory, real 
bundle, anti-holomorphic involution.}

\subjclass[2010]{14H60, 14D20, 14H52} 

\date{30 September 2020}

\begin{abstract}
We introduce real structures on $L$-twisted Higgs pairs over a compact connected Riemann surface $X$
equipped with an anti-holomorphic involution, where $L$ is a holomorphic line bundle on $X$ with a real
structure, and prove a Hitchin--Kobayashi correspondence for the $L$-twisted Higgs pairs.
Real $G^\R$-Higgs bundles, where $G^\R$ is a real form of a connected semisimple complex
affine algebraic group $G$, constitute
a particular class of examples of these pairs. In this case,
the real structure of the moduli space of $G$-Higgs pairs is defined using
a conjugation of $G$ that commutes with the one defining the real form $G^\R$ and a compact conjugation of $G$
preserving $G^\R$. We establish a
homeomorphism between the moduli space of real $G^\R$-Higgs bundles and the moduli space of 
representations of the fundamental group of $X$ in $G^\R$ that can be extended to a representation of the orbifold 
fundamental group of $X$ into a certain enlargement of $G^\R$ with quotient $\Z/2\Z$. Finally, we show how 
real $G^\R$-Higgs bundles appear naturally as 
fixed points of certain anti-holomorphic involutions of the moduli space of $G^\R$-Higgs bundles,
constructed using the real structures on $G$ and $X$. A similar result is proved 
for the representations of the orbifold fundamental group.
 \end{abstract}
\maketitle

\tableofcontents

\section{Introduction}\label{Section1}

In recent years, much attention has been paid to the theory of Higgs pairs. The study of these 
objects is primarily motivated by various moduli problems arising from gauge theory, algebraic 
geometry, symplectic geometry and topology. To recall the definition of a Higgs pair, let $X$ be a 
compact connected Riemann surface, $G$ a connected reductive complex affine algebraic group, $\VV$ 
a complex vector space, $\rho:G\longrightarrow \GL(\VV)$ a holomorphic representation and $L$ a 
holomorphic line bundle over $X$. A $L$-\textbf{twisted Higgs pair} $(E,\,\varphi)\,$\textbf{ of 
type} $\rho$ is a pair consisting of a holomorphic principal $G$-bundle $E$ over $X$ and a 
holomorphic section $\varphi$ of $V\otimes L$, where $V\,:=\,E(\VV)$ is the holomorphic vector bundle over $X$ 
associated to $E$ via $\rho$. In \cite{GGM:21}, the notion of $\alpha$-polystability for these 
pairs was introduced, where $\alpha$ is an element of the center of the Lie algebra $\kK$ of a
fixed maximal compact subgroup $K\,\subset\, G$. The \textbf{Hitchin-Kobayashi correspondence} in this 
context, also proved in \cite{GGM:21}, has the following formulation:

Once we fix a K\"ahler form $\omega$ of $X$, a $L$-twisted Higgs pair $(E,\varphi)$ of type $\rho$ 
is $\alpha$-polystable if and only if there is a reduction $h$ of structure group of the principal 
$G$-bundle $E$, to the maximal compact subgroup $$K\, \subset\, G\, ,$$ that satisfies the 
Hermite--Einstein--Higgs equation
\begin{equation}\label{eq:1}
\Lambda\,F_h+\mu_h(\varphi)\,=\,-\sqrt{-1}\,\alpha\,,
\end{equation} 
where $F_h$ is the curvature of the unique connection, on the principal $K$-bundle $E_K\,\subset\, 
E$ corresponding to the reduction $h$, which is compatible with the holomorphic structure of $E$, 
while $\Lambda$ denotes the contraction of differential forms on $X$ using the K\"ahler form
$\omega$, and $\mu_h$ is a moment map that depends on $h$.

The construction of the above mentioned moment map $\mu_h$ requires fixing a 
bi-invariant inner product $B$ on the Lie algebra $\kK\, =\, \text{Lie}(K)$, a $K$-invariant Hermitian 
product $\langle\, ,\,\rangle$ on $\VV$ as well as a Hermitian metric $h_{L}$ on $L$. Examples of 
twisted Higgs pairs are quiver bundles, Higgs bundles and Hodge bundles among other objects (see 
\cite{LAC, bradlowtriples, bradlowtriples1, GGM:21, GPR} and references therein for examples and more on Higgs pairs).

One of the main goals of this paper is to extend the above correspondence to the context where all
the objects are equipped with real structures. By these we mean
anti-holomorphic involutions $\sigma_{X}$ and $\sigma_{G}$ on $X$ and $G$, respectively, an
anti-linear involution $\sigma_{\VV}$ on $\VV$ such that the holomorphic representation $\rho$ is
compatible with respect to $\sigma_{G}$ and $\sigma_{\VV}$, as well as an anti-holomorphic
involution $\sigma_{L}$ on $L$ over $\sigma_X$, which is anti-linear on the fibers. 
A $(\sigma_{X},\sigma_{G},c)$-real structure on a holomorphic principal $G$-bundle
$E$ over $X$ is an anti-holomorphic automorphism $\sigma_{E}$ of $E$ over
the involution $\sigma_X$ of $X$, such that 
$\sigma_{E}(eg)\,=\,\sigma_{E}(e)\, \sigma_{G}(g)$ and $\sigma^2_E(e)\,=\,e\,c$, for all
$e\,\in\, E$ and $g\,\in\, G$, where $c\,\in\, Z^{\sigma_G}_{2}\cap \Ker(\rho)$ with $Z^{\sigma_G}_{2}$
being the subgroup
of the center $Z(G)$ of $G$ consisting of all the elements of order two invariant under
the involution $\sigma_G$.
With these real structures in place, set $$\ssigma\,=\,(\sigma_{X},\,\sigma_{G},\,c,\, \sigma_{L},\, \sigma_{\VV},\,\pm)\, .$$
A \textbf{$\ssigma$-real $L$-twisted Higgs pair of type $\rho$} is a triple of the form
$(E,\,\varphi, \,\sigma_E)$, where
\begin{itemize}
\item $(E,\,\varphi)$ is a $L$-twisted Higgs pair of type $\rho$,

\item $\sigma_E$ is a $(\sigma_{X},\sigma_{G},c)$-real structure
on $E$, and

\item $\varphi$ satisfies the condition $(\sigma_{V}\otimes\sigma_{L})(\varphi)\,=\,
\pm\sigma_{X}^{*}\varphi$, with $\sigma_{V}$ being the involution of
$V\,=\,E(\VV)$ induced by $\sigma_{E}$ and $\sigma_{\VV}$.
\end{itemize}

A reduction of structure group $E_{K} \, \subset\, E$ 
of $E$ to the maximal compact subgroup $K$ is said to be $\sigma_{E}$-compatible 
if it is preserved by the self-map $\sigma_{E}$ of $E$.
We prove the following Hitchin--Kobayashi correspondence (see Theorem 
\ref{th:Hitchin--Kobayashi}):

\begin{The}\label{thmi}
A $\ssigma$-real $L$-twisted Higgs pair $(E,\,\varphi,\,\sigma_{E})$ of type $\rho$ is
polystable if and only if there is a $\sigma_{E}$-compatible reduction $h$ of the
structure group of $E$, from $G$ to $K$, that satisfies equation \eqref{eq:1}.
\end{The}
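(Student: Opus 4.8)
The plan is to reduce the statement to the Hitchin--Kobayashi correspondence for ordinary $L$-twisted Higgs pairs established in \cite{GGM:21}, and then to upgrade an arbitrary solution to a $\sigma_E$-compatible one by an averaging argument built from the involution $\sigma_E$. First I would relate the two notions of stability: every $\ssigma$-real pair $(E,\varphi,\sigma_E)$ has an underlying pair $(E,\varphi)$, and the real pair is polystable precisely when $(E,\varphi)$ is $\alpha$-polystable in the sense of \cite{GGM:21}. The nontrivial implication follows by averaging: any subobject of $(E,\varphi)$ violating the required inequality can be replaced by its sum with the image under $\sigma_E$, which is $\sigma_E$-invariant and has the same slope, so it suffices to test polystability against $\sigma_E$-invariant subobjects. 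Granting this, the ``if'' direction is immediate, since a $\sigma_E$-compatible reduction $h$ solving \eqref{eq:1} is in particular a reduction solving \eqref{eq:1}, hence $(E,\varphi)$ is $\alpha$-polystable and the real pair is polystable.

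For the ``only if'' direction, suppose $(E,\varphi,\sigma_E)$ is polystable. By \cite{GGM:21} there is a reduction $h$ of $E$ to $K$ solving \eqref{eq:1}, but a priori $h$ need not be preserved by $\sigma_E$. The involution acts on the space of reductions to $K$: composing $h$ with $\sigma_E$ and using the compatibility of $\sigma_G$ with the compact conjugation of $G$ that preserves $K$, together with the compatibility of $\sigma_{\VV}$, $\sigma_L$ and $\sigma_G$ with the auxiliary data $\langle\,,\,\rangle$, $h_L$ and $B$, I would verify that the transformed reduction $\sigma_E\cdot h$ is again a solution of \eqref{eq:1}. Since $\sigma_E^2$ equals right multiplication by the central element $c\in \Ker(\rho)$, which acts trivially on the space of reductions, the assignment $h\mapsto\sigma_E\cdot h$ is an involution of the set of solutions.

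It then remains to pass from the two solutions $h$ and $\sigma_E\cdot h$ to a single $\sigma_E$-invariant one. For a polystable pair the solutions of \eqref{eq:1} form one orbit of the complex reductive group $\Aut(E,\varphi)$, realized as a totally geodesic copy of the nonpositively curved symmetric space $\Aut(E,\varphi)/U$, with $U$ the compact stabilizer of a fixed solution, and the map $h\mapsto\sigma_E\cdot h$ acts on this space as an isometric involution. A complete simply connected space of nonpositive curvature is $\mathrm{CAT}(0)$, so such an involution has a fixed point, namely the midpoint of the geodesic segment joining $h$ and $\sigma_E\cdot h$; concretely this midpoint is obtained by correcting $h$ with the positive square root of the automorphism carrying $h$ to $\sigma_E\cdot h$. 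This fixed point is the desired $\sigma_E$-compatible solution of \eqref{eq:1}.

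I expect the main obstacle to be this last step. One must show that the solution set is genuinely a complete, totally geodesic, nonpositively curved subspace of the space of reductions on which $h\mapsto\sigma_E\cdot h$ is an isometric involution, so that the Cartan midpoint argument applies and the fixed point produced is still an honest solution of \eqref{eq:1}. Verifying that $\sigma_E\cdot h$ solves \eqref{eq:1} is the other delicate point, as it rests entirely on the chosen $B$, $\langle\,,\,\rangle$ and $h_L$ being compatible with $\sigma_G$, $\sigma_{\VV}$ and $\sigma_L$.
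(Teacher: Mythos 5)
Your overall strategy --- deduce everything from the ordinary Hitchin--Kobayashi correspondence of \cite{GGM:21} and then symmetrize a solution --- is genuinely different from the paper's, but it has a gap at its foundation. To invoke \cite{GGM:21} in the ``only if'' direction you need to know that polystability of the $\ssigma$-real pair $(E,\varphi,\sigma_E)$ implies $\alpha$-polystability of the underlying pair $(E,\varphi)$. This implication is precisely the nontrivial direction of Corollary \ref{Cor:realpairspairs}, which the paper obtains only \emph{as a consequence} of Theorem \ref{th:Hitchin--Kobayashi}: the real stability condition tests only those parabolic reductions $E_{P_s}$ satisfying $\sigma_{\Ad(E)}(\Ad(E_{P_s}))=\Ad(E_{P_s})$, so it is a priori strictly weaker than stability of the underlying pair, and an arbitrary destabilizing reduction need not satisfy that invariance. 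Your proposed fix --- replace a destabilizing subobject by ``its sum with the image under $\sigma_E$, which is $\sigma_E$-invariant and has the same slope'' --- does not work as stated: for a general reductive $G$ a reduction to a parabolic $P_s$ has no meaningful ``sum'' with its $\sigma_E$-image (which is a reduction to the different parabolic $\sigma_G(P_s)$), and even in the vector bundle case the sum of a subsheaf with its conjugate does not have the same slope. Without an independent proof of this implication your argument is circular.

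The paper avoids this entirely: it first proves a Jordan--H\"older reduction (Theorem \ref{th:JHreduction}) to pass from the polystable to the stable case, characterizes the infinitesimal automorphisms of a stable real pair (Proposition \ref{pro:aut}) to get simplicity, and then runs the variational argument of \cite{MIR:98, GGM:21} directly over $\sigma$-compatible metrics, symmetrizing a minimizing sequence by $s\mapsto \tfrac{1}{2}(s+\sigma_{\GGG}(s))$ in the linear space $\Omega^0(E(\sqrt{-1}\,\kK))$, where averaging is unproblematic. Your second symmetrization step --- viewing the solution set of \eqref{eq:1} as an orbit of $\Aut(E,\varphi)$ modulo its compact stabilizer and taking the Cartan geodesic midpoint of $h$ and $\sigma_E\cdot h$ --- is a legitimate and attractive alternative to that analysis (it trades the hard $C^0$ estimate of Proposition \ref{Pro:ctes} for the uniqueness statement in the ordinary correspondence plus nonpositive curvature of the space of metrics), and verifying that $\sigma_E\cdot h$ is again a solution is routine given the compatibility hypotheses on $B$, $h_{\VV}$ and $h_L$. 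But as written the proposal cannot get off the ground, because the input it feeds to \cite{GGM:21} is exactly the statement the paper can only prove after the theorem is established.
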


In order to prove Theorem \ref{thmi}, we first adapt the arguments in \cite{GGM:21} to reduce the 
proof to the case of stable Higgs pairs. The main difficulty here is to characterize the space of infinitesimal 
automorphisms for a stable $\ssigma$-real $L$-twisted Higgs pair $(E,\,\varphi,\,\sigma_{E})$, taking 
into account our stability condition that involves a condition on the adjoint bundle $\Ad(E)$. 
Once that is achieved, we adapt the arguments in \cite{BGMR:23} to prove the above theorem in the stable 
case. This approach involves finding a minimizing sequence of $\sigma_{E}$-compatible metrics for 
the integral of the moment map (\ref{eq:1}), converging weakly to the 
solution we are seeking.

Now, let $G^\R$ be a real form of a connected semisimple complex affine algebraic group $G$. 
Let $H^\R\subset G^\R$ be a maximal compact subgroup, and $\lieg^\R=\lieh^\R\oplus \liem^\R$ be 
the Cartan decomposition of $\lieg^\R$, the Lie algebra of $G^\R$, where $\lieh^\R$ is the Lie algebra of 
$H^\R$ and $\liem^\R$ is its orthogonal complement with respect to the Killing form
on $\lieg^\R$. Let $H$ and $\liem$ be the 
complexifications of $H^\R$ and $\liem^\R$ respectively.

A particular class of $L$-twisted Higgs pairs are the \textbf{$G^\R$-Higgs bundles}. For a $G^\R$-Higgs 
bundle, the above line bundle $L$ is the holomorphic cotangent bundle $K_X$ of $X$, the structure group 
is $H$, while $\rho$ is the adjoint representation $\iota\,:\, 
H\,\longrightarrow \,\GL(\liem)$. In other words, a $G^\R$-Higgs bundle is a pair $(E,\varphi)$, where $E$ is a
holomorphic principal $H$-bundle on $X$ and $\varphi$ is a holomorphic section of the
holomorphic vector bundle $E(\liem)\otimes K_X$. We recall that the non-abelian 
Hodge correspondence for $G^\R$-Higgs bundles produces a homeomorphism between the 
moduli space $\Mm(G^\R)$ of polystable $G^\R$-Higgs bundles on $X$ and the character variety $\Rr(G^\R)$ of equivalence 
classes of reductive representations of the fundamental group of $X$ in $G^\R$ (see \cite{GGM:21}).

Another main result of this paper is the extension of the non-abelian Hodge correspondence to the 
context of \textbf{real $G^\R$-Higgs bundles}, where $G^\R$ is, as above, a real form of a connected 
semisimple complex affine algebraic group $G$. For this set-up, we need to consider a real 
structure $\sigma_{G}$ on $G$ which satisfies the condition that it commutes with a fixed compact 
conjugation $\tau$ and the conjugation $\mu$ of $G$ defining the real form
 $G^\R$. Then 
$\sigma_{G}$ preserves $H$ while $d\sigma_{G}$ preserves $\mM$. Moreover, $\iota$ is a 
representation compatible with $\sigma_{G}$ and $d\sigma_{G}$. Real $G^\R$-Higgs bundles are 
$\ssigma$-real $K_X$-twisted Higgs pairs of type $\iota$, where $\sigma_{\VV}\,=\,d\sigma_{G}$, 
while $\sigma_{L}\,=\,\sigma_{K_X}$ is the natural anti-holomorphic involution of $L\,=\,K_{X}$ 
induced by $\sigma_{X}$. Note that the case of real $G^\R$-Higgs bundles generalizes the case of real Higgs bundles for a complex group 
studied in \cite{BGH:98}, \cite{BGH:99} and \cite{BH:11}, since a complex semisimple Lie group $G$ 
can be viewed as a real form of $G\times\overline{G}$, where the anti-holomorphic involution is
$(x,\, y)\, \longmapsto\, (y,\, x)$.

To describe the other side of the non-abelian Hodge correspondence,
let $\Gamma(X,\,x)$ be the orbifold fundamental group of $(X,\,\sigma_X)$ for a base point $x\,\in\, X$. Let 
$$c\,\in\, Z^{\sigma_{G}}_{2}(H)\cap \Ker(\iota)\cap Z(G^\R)\, ,$$ where $Z(H)$ and 
$Z(G^\R)$ are the centers of $H$ and $G^\R$ respectively, and $Z^{\sigma_{G}}_{2}(H)$ is the subgroup of elements of order 2 in $Z(H)$ invariant under $\sigma_G$.
Also, let
$\widehat{G}^\R_\pm=\widehat{G}^\R_\pm(\sigma_G,c)$ be the group whose underlying set is $G^\R\times(\mathbb{Z}/2\mathbb{Z})$ and
the group operation is given by the rule
$$(g_1,\,e_1)(g_2,\,e_2)\,=\,(g_{1}(\sigma_{G}\tau^{\frac{1}{2}\mp \frac{1}{2}})^{e_1}(g_2) c^{e_{1}+ e_{2}},
\,e_{1}+e_{2})\, .$$

A representation $\widehat{\rho}\,:\,
\Gamma(X,\, x)\,\longrightarrow\, \widehat{G}^\R_\pm$ 
is called $(\sigma_X,\sigma_{G},c,\pm)$-\textbf{compatible} 
if it is an extension of a representation 
$\rho\,:\, \pi_1(X,\, x)\,\longrightarrow\, G^\R$ 
fitting in a commutative diagram
of homomorphisms
\begin{equation}
\xymatrix{
0\ar[r] &\pi_1(X,\,x)\ar[r]^{i}\ar[d]^{\rho} &\Gamma(X,\, x)\ar[r]^{q}\ar[d]^{\widehat{\rho}} &\mathbb{Z}/2\mathbb{Z}\ar[r]\ar[d]^{\Id}&0\\
0\ar[r] &G^\R\ar[r]^{i'} &\widehat{G}^\R_\pm\ar[r]^{q'} &\mathbb{Z}/2\mathbb{Z}\ar[r]&0\,,
}
\end{equation}
where $i$ and $i'$ are the inclusion maps and $q$ and $q'$ are the corresponding
projections.

Let $\Rr(G^\R,\sigma_X,\sigma_{G},c,\pm)$ be the variety consisting of
$G^\R$-conjugacy classes of $(\sigma_X,\sigma_{G},c,\pm)$-compatible 
representations 
$\widehat{\rho}\,:\,
\Gamma(X,\, x)\,\longrightarrow\, \widehat{G}^\R_\pm$ 
whose restriction to $\pi_1(X,\,x)$ is reductive, that is,
its conjugacy class is an element in $\calR(G^\R)$.

We prove the following (see Theorem \ref{th:non-abelian}):

\begin{The}\label{thmi2}
There is a canonical homeomorphism
between $\Rr(G^\R,\sigma_X,\sigma_{G},c,\pm)$
and the moduli space $\Mm(G^\R,\sigma_{X},\sigma_{G},c,\pm)$ of polystable $(\sigma_{X},\sigma_{G},c,\pm)$-real $G^\R$-Higgs bundles.
\end{The}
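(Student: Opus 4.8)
The plan is to combine the real Hitchin--Kobayashi correspondence of Theorem~\ref{thmi}, specialized to $G^\R$-Higgs bundles (where $L=K_X$, the structure group is $H$, and $\rho=\iota$), with the ordinary non-abelian Hodge correspondence $\Mm(G^\R)\xrightarrow{\,\cong\,}\calR(G^\R)$ of \cite{GGM:21}. The goal is to show that the extra datum of a real structure $\sigma_E$ on a polystable $G^\R$-Higgs bundle corresponds, under the ordinary correspondence, exactly to an extension of the associated $\pi_1(X,x)$-representation to a $(\sigma_X,\sigma_G,c,\pm)$-compatible representation of the orbifold fundamental group $\Gamma(X,x)$, and that this assignment is a bijection respecting equivalence classes and topologies.

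For the forward direction, start from a polystable $(\sigma_X,\sigma_G,c,\pm)$-real $G^\R$-Higgs bundle $(E,\varphi,\sigma_E)$. By Theorem~\ref{thmi} there is a $\sigma_E$-compatible reduction $h$ of the structure group of $E$ to the maximal compact $H^\R\subset H$ solving the Hermite--Einstein--Higgs equation \eqref{eq:1}. From $h$, $\varphi$ and the holomorphic structure one forms, in the standard way, a flat $G^\R$-connection whose monodromy is a reductive representation $\rho\,:\,\pi_1(X,x)\longrightarrow G^\R$, giving the class in $\calR(G^\R)$. Since $h$ is $\sigma_E$-compatible and $\varphi$ obeys the reality condition $(\sigma_V\otimes\sigma_L)(\varphi)=\pm\,\sigma_X^{*}\varphi$, the involution $\sigma_E$ is compatible with this flat connection over the anti-holomorphic involution $\sigma_X$, and therefore descends the associated flat bundle to the quotient orbifold $[X/\sigma_X]$. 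Choosing a lift $\delta\in\Gamma(X,x)$ of $\sigma_X$ over the nontrivial element of $\Z/2\Z$ and transporting $\sigma_E$ along the flat structure assigns to $\delta$ an element $\widehat\rho(\delta)\in\widehat{G}^\R_\pm$; the requirements that $\sigma_E$ be semilinear through $\sigma_G$ and satisfy $\sigma_E^{2}=c$ translate into the multiplication rule of $\widehat{G}^\R_\pm$, with twist $\sigma_G\tau^{\frac12\mp\frac12}$ and the appearance of $c$, so that $\widehat\rho\,:\,\Gamma(X,x)\longrightarrow\widehat{G}^\R_\pm$ is a well-defined $(\sigma_X,\sigma_G,c,\pm)$-compatible representation.

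For the reverse direction, given a $(\sigma_X,\sigma_G,c,\pm)$-compatible $\widehat\rho$ whose restriction $\rho=\widehat\rho|_{\pi_1(X,x)}$ is reductive, the ordinary non-abelian Hodge correspondence applied to $\rho$ produces a polystable $G^\R$-Higgs bundle $(E,\varphi)$ with its harmonic metric $h$. The image $\widehat\rho(\delta)$ of a lift of $\sigma_X$ then determines, by parallel transport, an anti-holomorphic self-map $\sigma_E$ of $E$ covering $\sigma_X$. One checks that $\sigma_E$ is a genuine $(\sigma_X,\sigma_G,c)$-real structure --- anti-holomorphic over $\sigma_X$, semilinear via $\sigma_G$, with $\sigma_E^{2}=c$ forced by the class of $\widehat\rho(\delta)$ in $\widehat{G}^\R_\pm$ --- and that $\varphi$ satisfies the corresponding $\pm$-reality condition. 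Uniqueness of the harmonic metric (for reductive $\rho$) makes both assignments canonical and mutually inverse. Finally, $G^\R$-gauge equivalence of real $G^\R$-Higgs bundles matches $G^\R$-conjugacy of the orbifold representations, since a gauge transformation commuting with $\sigma_E$ descends to an element of $G^\R$ conjugating $\widehat\rho(\delta)$ accordingly; together with continuity of the ordinary non-abelian Hodge homeomorphism, restricted to the real loci with their subspace topologies, this yields the asserted homeomorphism.

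The main obstacle is the precise bookkeeping of how the three group-theoretic ingredients --- the conjugation $\sigma_G$, the compact conjugation $\tau$, and the central element $c\in Z^{\sigma_G}_2(H)\cap\Ker(\iota)\cap Z(G^\R)$ --- interact across the correspondence, and in particular pinning down the exact power $\tau^{\frac12\mp\frac12}$. On the representation side everything is expressed through the flat connection built from holomorphic data and the harmonic metric, so composing the anti-holomorphic $\sigma_E$ with the Hermitian adjoint $*_h$ inevitably brings in $\tau$; the sign $\pm$ in the reality condition on $\varphi$ is precisely what records whether this compact twist is present (the $-$ case, $\tau^{1}$) or absent (the $+$ case, $\tau^{0}$). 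Matching this sign exactly, verifying that reductivity is preserved in both directions, and controlling the descent of the flat bundle at the fixed locus of $\sigma_X$ (where $[X/\sigma_X]$ acquires boundary or orbifold points) together constitute the delicate technical core of the argument.
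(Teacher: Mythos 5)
Your proposal follows essentially the same route as the paper: the real Hitchin--Kobayashi correspondence (Theorem \ref{thmi}) to pass to solutions of the Hitchin equations, a real version of the Donaldson--Corlette theorem in which the uniqueness of the harmonic metric forces its compatibility with $\sigma_E$, and the construction of the extension $\widehat\rho$ of the holonomy representation to $\Gamma(X,x)$ by parallel transport of $\sigma_E$ along the flat connection, with $c$ and the twist $\sigma_G\tau^{\frac12\mp\frac12}$ accounting for the multiplication law of $\widehat G^\R_\pm$. The only cosmetic difference is that the paper never descends to the quotient $[X/\sigma_X]$ (it picks a base point with $\sigma_X(x)\neq x$ and works with paths from $x$ to $\sigma_X(x)$), so the worry about fixed points of $\sigma_X$ does not arise.
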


The proof of Theorem \ref{thmi2} crucially uses Theorem 
\ref{thmi} and an appropriate version of the Donaldson--Corlette
theorem on the existence of harmonic metrics. It may be mentioned that Theorem \ref{thmi2} 
could be the starting point to identify higher Teichm\"uller spaces in this real context, as it is done in the 
usual theory of $G^\R$-Higgs bundles (see \cite{prada} for a review).

Real $G^\R$-Higgs bundles appear in a natural way as fixed points of the involutions of the moduli space $\Mm(G^\R)$
of $G^\R$-Higgs bundles defined by
\begin{equation}\label{eq:ecuacionesp1}
\mapnormal{\Mm(G^\R)}{\Mm(G^\R)}{(E,\,\varphi)}{(\sigma_{X}^{*}\sigma_{G}E,\pm\sigma_{X}^{*}\sigma_{G}
\varphi)\,.}{\iota_{\Mm}(\sigma_{X},\,\sigma_{G})^{\pm}}
\end{equation}
 
The obvious forgetful map induces a map from $\Mm(G^\R,\sigma_{X},\sigma_{G},c,\pm)$ 
to $\Mm(G^\R)$, and we prove the following (see Proposition \ref{pro:pseudoreales}):

\begin{Pro}\label{propi}
The image of $\Mm(G^\R,\sigma_{X},\sigma_{G},c,\pm)$ in $\Mm(G^\R)$ is contained in the fixed point set 
$\Mm(G^\R)^{\iota_{\Mm}(\sigma_{X},\sigma_{G})^{\pm}}$. Furthermore, if we restrict the
involution $\iota_{\Mm}(\sigma_{X},\sigma_{G})^{\pm}$
in \eqref{eq:ecuacionesp1} to the subvariety of $\Mm_{\sm}(G^\R)\subset \Mm(G^\R)$, consisting of stable and 
simple $G^\R$-Higgs bundles,
then $\Mm_\sm(G^\R)^{\iota_{\Mm}(\sigma_{X},\sigma_{G})^{\pm}}$ is contained in the image in $\Mm(G^\R)$ of
the union of moduli spaces $\Mm(G^\R,\sigma_{X},\sigma_{G},c,\pm)$
parameterized by elements $c\,\in\, Z^{\sigma_{G}}_{2}(H)\cap \Ker(\iota)$, where $Z(H)$ is the
center of $H$ and $Z^{\sigma_{G}}_{2}(H)$ is the subgroup of elements of order two in $Z(H)$ invariant under $\sigma_G$.
\end{Pro}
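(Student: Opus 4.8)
The two assertions are the two directions of a correspondence between real structures and fixed points, and I would prove them separately.

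\emph{The easy inclusion.} First I would recall the standard dictionary: an anti-holomorphic map $\sigma_{E}\,:\,E\,\to\, E$ lying over $\sigma_{X}$ and satisfying $\sigma_{E}(eg)=\sigma_{E}(e)\sigma_{G}(g)$ is the same datum as a holomorphic isomorphism of $H$-bundles $E\,\cong\,\sigma_{X}^{*}\sigma_{G}E$. Under the induced isomorphism $E(\mM)\otimes K_{X}\,\cong\,\sigma_{X}^{*}\sigma_{G}\bigl(E(\mM)\otimes K_{X}\bigr)$, the defining condition $(\sigma_{V}\otimes\sigma_{L})(\varphi)=\pm\sigma_{X}^{*}\varphi$ of a $\ssigma$-real $G^{\R}$-Higgs bundle says precisely that $\varphi$ is carried to $\pm\sigma_{X}^{*}\sigma_{G}\varphi$. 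Hence $(E,\varphi)$ is isomorphic to $(\sigma_{X}^{*}\sigma_{G}E,\,\pm\sigma_{X}^{*}\sigma_{G}\varphi)=\iota_{\Mm}(\sigma_{X},\sigma_{G})^{\pm}(E,\varphi)$, so its class is fixed. Since the forgetful map and $\iota_{\Mm}(\sigma_{X},\sigma_{G})^{\pm}$ are both defined on polystable objects, this gives the first inclusion.

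\emph{Producing a real structure from a fixed point.} Conversely, let $(E,\varphi)$ be stable, simple and fixed by $\iota_{\Mm}(\sigma_{X},\sigma_{G})^{\pm}$. By definition of the involution there is an isomorphism $f\,:\,(E,\varphi)\,\cong\,(\sigma_{X}^{*}\sigma_{G}E,\,\pm\sigma_{X}^{*}\sigma_{G}\varphi)$, which via the dictionary above is the same as an anti-holomorphic $\sigma_{E}\,:\,E\,\to\, E$ over $\sigma_{X}$ with $\sigma_{E}(eg)=\sigma_{E}(e)\sigma_{G}(g)$ and with $\varphi$ satisfying $(\sigma_{V}\otimes\sigma_{L})(\varphi)=\pm\sigma_{X}^{*}\varphi$. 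The composite $\sigma_{E}^{2}$ is then a holomorphic automorphism of $E$ over $\id_{X}$; applying the $\varphi$-compatibility twice (the sign is squared) shows that $\sigma_{E}^{2}$ preserves $\varphi$, so $\sigma_{E}^{2}\in\Aut(E,\varphi)$. Simplicity identifies this automorphism group with the constant central gauge transformations $Z(H)\cap\Ker(\iota)$, whence $\sigma_{E}^{2}(e)=e\,c$ for a unique $c\in Z(H)\cap\Ker(\iota)$. A short computation, comparing the two ways of writing $\sigma_{E}^{3}=\sigma_{E}\circ\sigma_{E}^{2}=\sigma_{E}^{2}\circ\sigma_{E}$ and using $\sigma_{G}^{2}=\id$, forces $\sigma_{G}(c)=c$.

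\emph{The main obstacle: normalizing $c$ to order two.} It remains to arrange $c\in Z^{\sigma_{G}}_{2}(H)$, that is $c^{2}=1$, and this is the delicate point. The map $\sigma_{E}$ is unique only up to composition with an element of $\Aut(E,\varphi)=Z(H)\cap\Ker(\iota)$, and replacing $\sigma_{E}$ by its composition with translation by an element $z$ changes $c$ into $c\,z\,\sigma_{G}(z)$. Writing $A=Z(H)\cap\Ker(\iota)$ and $N(z)=z\,\sigma_{G}(z)$, the class of $c$ is therefore well defined in the Tate cohomology group $\widehat{H}^{0}(\Z/2\Z,\,A)=A^{\sigma_{G}}/N(A)$. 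Because $c=\sigma_{G}(c)$ one has $c^{2}=c\,\sigma_{G}(c)=N(c)\in N(A)$, so the class of $c$ is $2$-torsion there. The plan is then to solve $N(c\,z^{2})=1$ for some $z\in A$, equivalently to realise the $2$-torsion class of $c$ by an element of the order-two subgroup $Z^{\sigma_{G}}_{2}(H)\cap\Ker(\iota)$; I expect this step, which amounts to surjectivity of $Z^{\sigma_{G}}_{2}(H)\cap\Ker(\iota)$ onto the $2$-torsion of $A^{\sigma_{G}}/N(A)$ and is handled through the divisibility of the identity component of $A$ together with an analysis of its component group, to be the crux of the argument. With such a $c$ in hand, $(E,\varphi,\sigma_{E})$ is a $(\sigma_{X},\sigma_{G},c,\pm)$-real $G^{\R}$-Higgs bundle, and it is stable, hence polystable, so its class lies in the image of $\Mm(G^{\R},\sigma_{X},\sigma_{G},c,\pm)$, which completes the second inclusion.
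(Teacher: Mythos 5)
Your overall route is the same as the paper's. The first inclusion is argued exactly as in the paper: the real structure $\sigma_{E}$ is traded for a holomorphic isomorphism $f\colon E\to\sigma_{X}^{*}\sigma_{G}(E)$ carrying $\varphi$ to $\pm\sigma_{X}^{*}\sigma_{G}(\varphi)$, so the class of $(E,\varphi)$ is fixed by $\iota_{\Mm}(\sigma_{X},\sigma_{G})^{\pm}$. For the converse on the stable and simple locus your first two steps also coincide with the paper's: a fixed point yields $f$, the composition $\sigma_{X}^{*}\sigma_{G}(f)\circ f$ lies in $\Aut(E,\varphi)=Z(H)\cap\Ker(\iota)$ by simplicity, and commuting $f$ past this central element gives $\sigma_{G}(c)=c$.

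The divergence, and the gap, is at the order-two claim. The paper at this point simply writes ``therefore $c\in Z_{2}^{\sigma_{G}}\cap\Ker(\iota)$''; you are right that $\sigma_{G}(c)=c$ alone does not give $c^{2}=1$, and right that the only freedom available is to replace $f$ by $zf$, which changes $c$ into $c\,z\,\sigma_{G}(z)$. But your proposal does not close this step: you only announce a plan to solve $N(cz^{2})=1$ and appeal to ``divisibility of the identity component'' of $A=Z(H)\cap\Ker(\iota)$. That mechanism is unavailable here. Since $\iota$ is the restriction of the adjoint representation and $\mathfrak{g}=\mathfrak{h}\oplus\mathfrak{m}$, any element of $Z(H)\cap\Ker(\iota)$ acts trivially on all of $\mathfrak{g}$ and hence lies in $Z(G)$, which is finite because $G$ is semisimple; so $A$ is a finite group with no identity component to exploit. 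Worse, the purely group-theoretic statement you would need --- that every $\sigma_{G}$-fixed $c$ with $c^{2}\in N(A)$ is congruent modulo norms to an element of order at most two --- is false for a general finite abelian group with involution: for $A\cong\mathbb{Z}/4\mathbb{Z}$ with trivial involution and $c$ a generator, the coset $c\,N(A)=\{c,c^{3}\}$ contains no such element, and this situation is not vacuous (e.g.\ $Z(\mathrm{SL}(4,\mathbb{C}))$ with $\sigma_{G}$ an inner twist of $g\mapsto\overline{g}^{\,-t}$ acts trivially on the center). So the normalization of $c$ to order two requires an input beyond the Tate cohomology of $A$ --- for instance a geometric or topological argument excluding the bad central classes for stable simple fixed points --- and as written your third step is an unproved assertion resting on a strategy that cannot work in the form described. (For what it is worth, the paper's own proof does not supply this argument either; it passes from $\sigma_{G}(c)=c$ to $c\in Z_{2}^{\sigma_{G}}$ without comment, so you have correctly isolated the one genuinely delicate point, but you have not resolved it.)
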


We also prove a similar result for $\Rr(G^\R,\sigma_X,\sigma_{G},c,\pm)$, where the involutions of the
character variety $\Rr(G^\R)$ of the fundamental group of $X$ in $G^\R$ are given by
\begin{equation}\label{eq:ecuacionesr}
 \mapnormal{\Rr(G^\R)}{\Rr(G^\R)}{\rho}{\sigma_{G} \circ \tau^{\frac{1}{2}\mp\frac{1}{2}}\circ\rho \circ
(\sigma_{X})_{*}\,.}{\iota_{\Rr}(\sigma_{X},\sigma_{G})^{\pm}}
\end{equation}
(See Proposition \ref{rep-involutions}.)

When the group $G^\R$ is complex, the moduli space $\Mm(G^\R)$ is a hyper-K\"ahler manifold and the fixed point sets
of the involutions in 
(\ref{eq:ecuacionesp1}) and (\ref{eq:ecuacionesr}) define branes, in the sense of \cite{KapustinWitten}. These branes 
have been studied in \cite{Baraglia}, \cite {elliptic}, \cite{BGP}, \cite{francoana}, \cite{GPR},
\cite{garcia-prada&wilkin} and \cite{BGH3b}. We note that their importance stems from their close relation with 
mirror symmetry and the Langlands correspondence. 

In some sense, in this paper, we are considering {\em doubly real} Higgs bundles since we are studying real 
structures on Higgs bundles whose structure group is already real. This is perfectly possible since, although the 
group $G^\R$ is a real form of a complex group $G$, $G^\R$-Higgs 
bundles are holomorphic objects on $X$ even if $G^\R$ does not have a complex structure,
and hence the reality conditions can be defined by choosing another conjugation 
of $G$ preserving $G^\R$ as well as a conjugation of $X$. As we have explained above, in the paper we go beyond Higgs 
bundles, studying real structures on more general Higgs pairs.

The article is organized as follows. First, we review in Section \ref{Section2} the notions of real structures for 
the main objects that will be used in the subsequent sections, and we also analyze the Chern correspondence between 
holomorphic structures and connections in the presence of real structures. In Sections \ref{Section3} and 
\ref{sec4n} we prove a Hitchin--Kobayashi correspondence for $\ssigma$-real $L$-twisted Higgs pairs (here we follow 
\cite{GGM:21} closely). Firstly, after defining these pairs, we give some examples and recall the 
Hermite--Einstein--Higgs equation. We then prove that a polystable $\ssigma$-real $L$-twisted Higgs pair that is 
not stable admits a Jordan--H\"older reduction and, as a consequence of it, we reduce the proof to the stable case. 
Following that, we reformulate our problem in terms of finding a metric on which the integral of the moment map attains a 
minimum. Finally, we prove the converse which says that the existence of solutions of the Hermite--Einstein--Higgs 
equation implies polystability. In Section \ref{Section4}, we prove the non-abelian Hodge correspondence for real 
$G^\R$-Higgs bundles, where $G^\R$ is a real form of a connected semisimple complex Lie group $G$. We start by 
proving a bijective correspondence between $\Mm(G^\R,\sigma_{X},\sigma_{G},c,\pm)$ and the moduli space of triples 
consisting of a real structure on a $C^{\infty}$ principal $H^\R$-bundle, a compatible connection, and a real Higgs field 
satisfying the Hitchin equations. We also prove an appropriate Donaldson--Corlette correspondence, showing the 
bijection between the moduli space of solutions to Hitchin equations and the moduli space of compatible reductive 
flat $G^\R$-connections, which in turn is in bijective correspondence with $\Rr(G^\R,\sigma_X,\sigma_{G},c,\pm)$. 
Finally, in Section \ref{Section5}, we describe the relationship between the fixed-point of the involutions in 
\eqref{eq:ecuacionesp1} and the moduli spaces $\Mm(G^\R,\sigma_{X},\sigma_{G},c,\pm)$, where $c$ varies in 
$Z^{\sigma_{G}}_{2}(H)\cap \Ker(\iota)$. By the non-abelian Hodge correspondence, proved in the previous section, 
with the additional condition that $c\in Z^{\sigma_{G}}_{2}(H)\cap \Ker(\iota)\cap Z(G^\R)$, we obtain a similar 
description for the fixed points of the involutions in \eqref{eq:ecuacionesr} in terms of the moduli spaces 
$\Rr(G^\R,\sigma_X,\sigma_{G},c,\pm)$.

\section{Real structures}\label{Section2}

\subsection{Real structures on complex Lie groups and their representations}

We recall that a {\bf real structure} or {\bf conjugation} on a reductive complex affine algebraic group $G$ 
is an anti-holomorphic involution of $G.$ We denote by $\Conj(G)$ the set of conjugations on 
$G$, and also denote by $\Aut_{2}(G)$ the set of holomorphic
automorphisms of $G$ of order two (and the identity map). In $\Aut_{2}(G)$ and 
$\Conj(G)$ we define the equivalence relation $ \sigma \,{\sim}\,\sigma'$ if and only if there 
is some $\alpha\,\in\, \Aut(G)$ with the property that $\sigma'\,=\,\alpha^{-1}\circ \sigma \circ \alpha$. 
Cartan proved in \cite{Cartan} that there is a bijection
\begin{equation}\label{CT}
 \map{\Conj(G)/\!\!\sim}{\Aut_2(G)/\!\!\sim\,}{\left[\sigma\right]}{\left[\theta\right]\,,}{ }
\end{equation}
which is constructed as follows: Fix a compact conjugation $\tau$ of $G$; now given any
$$\sigma \,\in\, \Conj(G)\, ,$$ there is an element
$\sigma'\in \Conj(G)$ such that $\sigma'\,\sim\,\sigma$ and $\theta\,:=\,\sigma'\circ\tau\,\in\, \Aut_2(G)$.

\begin{Pro}\label{max}
Given any $\sigma\,\in\, \Conj(G),$ there is a maximal compact subgroup $$K\,\subset\, G$$ such that $\sigma(K)\,=\,K.$
\end{Pro}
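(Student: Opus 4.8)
The plan is to reduce the statement to the existence of a \emph{compact} conjugation $\tau$ of $G$ that commutes with $\sigma$. Indeed, if such a $\tau$ is found, then $K\,:=\,G^{\tau}$, the fixed-point subgroup of $\tau$, is a maximal compact subgroup of $G$ (the compact real form attached to the compact conjugation $\tau$), and for $g\in K$ the identity $\tau(\sigma(g))\,=\,\sigma(\tau(g))\,=\,\sigma(g)$ shows $\sigma(g)\in K$; applying $\sigma$ once more gives $\sigma(K)\,=\,K$. Thus everything comes down to producing a compact conjugation commuting with the given $\sigma$.

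For the existence of such a $\tau$, I would work with the space $\TTT$ of all compact conjugations of $G$, on which $G$ acts by $g\cdot\kappa\,=\,c_{g}\circ\kappa\circ c_{g}^{-1}$, where $c_{g}$ denotes conjugation by $g$. Since any two compact real forms are conjugate under inner automorphisms, $\TTT$ is a single $G$-orbit and, after fixing a base point $\tau_{0}$ with fixed group $U_{0}=G^{\tau_{0}}$, is identified with the homogeneous space $G/U_{0}$; the Killing form $B$ of $\gG$ endows it with the structure of a Riemannian symmetric space of noncompact type, hence a complete simply connected manifold of nonpositive curvature (a Hadamard manifold). The conjugation $\sigma$ acts on $\TTT$ by $\kappa\,\longmapsto\,\sigma\circ\kappa\circ\sigma$, which is again a compact conjugation, and a direct check gives the equivariance $\sigma\cdot(g\cdot\kappa)=\sigma(g)\cdot(\sigma\cdot\kappa)$; since $\sigma$ preserves $B$ up to complex conjugation of values, this action is by isometries and is an involution. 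The finite group $\langle\sigma\rangle\cong\Z/2\Z$ therefore acts isometrically on a Hadamard manifold, so by the Cartan fixed point theorem (the circumcenter of any bounded orbit is fixed) it has a fixed point $\tau\in\TTT$. By construction $\sigma\circ\tau\circ\sigma=\tau$, that is, $\tau$ is a compact conjugation commuting with $\sigma$, which is exactly what the reduction requires.

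Two points need care. First, for a reductive (rather than semisimple) $G$ one splits $G$ as an almost direct product of its central torus and its semisimple part: the maximal compact subgroup of the central torus is unique, hence automatically $\sigma$-stable, while the symmetric-space argument above applies to the semisimple part, whose associated space is genuinely of noncompact type. Second, one must verify that $\sigma$ really acts isometrically; this is where the compatibility $B(d\sigma\, X,\,d\sigma\, Y)=\overline{B(X,Y)}$ is used, together with the description of the symmetric-space metric through $-B$.

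Alternatively, the commuting compact conjugation can be produced purely algebraically at the level of $\gG$: starting from any compact conjugation $\tau_{0}$, the automorphism $N:=d\sigma\circ d\tau_{0}$ of $\gG$ admits a polar-type decomposition with respect to the Hermitian inner product determined by $\tau_{0}$, and a suitable real power of its positive part conjugates $\tau_{0}$ into a compact conjugation $\tau$ commuting with $\sigma$ (this is the standard argument; see Knapp or Helgason). I expect the one genuinely nontrivial step to be precisely this existence of a $\sigma$-commuting compact conjugation --- equivalently the fixed-point statement --- since the passage from it to the desired $\sigma$-stable maximal compact subgroup $K$ is formal.
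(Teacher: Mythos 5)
Your argument is correct and follows the same strategy as the paper's proof: reduce everything to producing a compact conjugation $\tau$ commuting with $\sigma$, and then take $K=G^{\tau}$, which is $\sigma$-stable by the commutation relation. The only difference lies in how the commuting $\tau$ is obtained — the paper transports a fixed compact conjugation through the Cartan bijection \eqref{CT} (and also sketches a second proof via a maximal compact subgroup of $G\rtimes(\Z/2\Z)$, which is essentially your Cartan fixed-point argument in disguise), whereas you prove the existence directly via the fixed-point theorem on the space of compact real forms or the polar decomposition; all of these routes are valid.
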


\begin{proof}
Given a compact conjugation $\tau$, by the bijection in (\ref{CT}) there is a conjugation
$$\sigma'\,:=\,\alpha^{-1}\circ \sigma \circ \alpha\, ,$$ for some $\alpha\in \Aut(G)$, such that $\sigma'\circ\tau\,=\,
(\sigma'\circ\tau)^{-1}$, and therefore we have
\begin{equation}\label{eq:commut}
\tau\circ\sigma'\,=\,\sigma'\circ\tau\, ,
\end{equation}
because $\tau$ and $\sigma'$ are both involutions. 
Consider $\tau'\,:=\,\alpha\circ \tau \circ \alpha^{-1}$; then $K\,:=\,G^{\tau'}$ is a maximal compact subgroup of $G$.
For any $x\, \in\, G^{\tau'}$, we have
$$
\tau'(\sigma(x)) \,=\, \alpha\circ \tau \circ \alpha^{-1}\circ \alpha\circ \sigma' \circ \alpha^{-1}(x)
\,=\, \alpha\circ \tau \circ\sigma'\circ\alpha^{-1}(x)
$$
$$
=\, \alpha\circ\sigma'\circ\alpha^{-1}\circ\alpha\circ\tau
\circ\alpha^{-1}(x)\,=\, \sigma(\tau'(x))
$$
using Equation (\ref{eq:commut}). This implies that $\sigma(G^{\tau'})\,=\, G^{\tau'}$.

An alternative proof: Using the involution $\sigma$ of $G$, construct the semi-direct
product $G\rtimes ({\mathbb Z}/2{\mathbb Z})$. Let $\widetilde K$ be a maximal compact
subgroup of $G\rtimes ({\mathbb Z}/2{\mathbb Z})$. Then $\widetilde{K}\cap G$ is a
maximal compact subgroup of $G$ which is preserved by $\sigma$.
\end{proof}

Let $G$ be a complex reductive affine algebraic group equipped with a real structure $\sigma_G$, and let 
$\VV$ be a complex vector space equipped with a real structure 
$\sigma_{\mathbb{V}}\,:\,\VV\,\longrightarrow\, \VV$, that is, an anti-linear involution. A 
holomorphic representation $\rho\,:\, G\,\longrightarrow\, \GL(\mathbb{V})$ is 
$(\sigma_G,\,\sigma_{\VV})$-\textbf{compatible} if for every $g\,\in\, G$ and $v\,\in\,\mathbb{V},$ we have
$$\sigma_\mathbb{V}(\,\rho(g)(v))\,=\,\rho(\sigma_G(g))(\sigma_{\mathbb{V}}(v))\,.$$

\begin{Exam}[{The adjoint representation}]\label{adjointrep}
Let $G$ be a complex group equipped with a real structure $\sigma_G$. Then the corresponding 
$\mathbb R$--linear automorphism $d\sigma_G\, :\, \gG\, \longrightarrow\, \gG$ 
is a real structure on the Lie algebra $\gG$ of $G$. Since the adjoint action $\Ad\,:\,G\,\longrightarrow 
\,\Aut(G)$ commutes with $\sigma_G$, it follows that $\Ad\,:\,G\,\longrightarrow\, \GL(\gG)$ is 
a $(\sigma_G,\,d\sigma_G)$-compatible representation.
\end{Exam}

\begin{Exam}[The isotropy representation]\label{isotropyrep}
Let $G^\R$ be a real form of a connected reductive Lie group $G$, and let
$\tau \,\in\, \Conj(G)$ be a compact conjugation. Denote $G^\tau$ by $K$. Let
$\mu\,\in \,\Conj(G)$ be a conjugation such that
the fixed point set $(G)^{\mu}$ is $G^\R$. A conjugation $\sigma\,\in\, \Conj(G)$ is
called $(\mu,\,\tau)$-\textbf{compatible} if
\begin{enumerate}
\item $\sigma\circ\mu\,=\,\mu\circ \sigma\,$, and
\item $\sigma\circ\tau\,=\,\tau\circ \sigma\,.$
\end{enumerate}
If $\sigma$ is a $(\mu,\,\tau)$-compatible conjugation, then $H^\R\,=\,G^\R\cap K$ is a maximal compact 
subgroup of $G^\R$ , whose complexification $H$ is preserved by $\sigma$.
Moreover, the Cartan decomposition
$\gG^\R\,=\, \hH^\R\,\oplus\,\mM^\R$ and its complexification
$\gG\,=\, \hH\,\oplus\,\mM$ are preserved by $d\sigma$. 
The adjoint action of $H^\R$ on $\mM^\R$ (the isotropy representation) 
extends to the complexification, giving a representation 
$\iota\,:\,H\,\longrightarrow\, \GL(\mM)\,$ which is $(\sigma,\,d\sigma)$-compatible.
\end{Exam}

\subsection{Real structures on Riemann surfaces and holomorphic bundles}\label{real-riemann}

We recall that a \textbf{real structure} on a compact Riemann surface $X$ is an anti-holomorphic
involution $$\sigma_X\,:\, X\,\longrightarrow\, X\, .$$
The pair $(X,\,\sigma_X)$ sometimes will be referred to as a \textbf{Klein surface}. Such pairs were first studied in \cite{K} and \cite{Wei}.
A \textbf{morphism of Klein surfaces}
$$f\,:\,(X,\,\sigma_X)\,\longrightarrow\, (X',\,\sigma_{X'})$$
is a holomorphic map $f\,:\,X\,\longrightarrow\, X'\,,$ such that 
$f\circ\sigma_{X}\,=\,\sigma_{X'}\circ f$. A K\"ahler form $\omega$ on a compact Klein surface 
$(X,\,\sigma_{X})$ is called \textbf{real} if $\sigma_{X}^{*}\omega\,=\,-\omega$. If 
$\omega$ is a K\"ahler form on $X$, then $\omega-\sigma_{X}^{*}\omega$ is a real K\"ahler form 
on $(X,\, \sigma_X)$ (see \cite[p. 4]{BGH:98}).

Let $(X,\,\sigma_{X})$ be a Klein surface, and let $V\,\longrightarrow \,X$ be a holomorphic vector bundle. 
A $\sigma_{X}$-\textbf{real structure} on $V$ is a $C^\infty$ isomorphism
$\sigma_{V}\,:\,V\,\longrightarrow\, V$ such that the following conditions hold:
\begin{enumerate}
\item $\sigma_{V}$ lifts $\sigma_{X}$, meaning that the diagram
$$
\xymatrix{
V\ar[d]\ar[r]^{\sigma_V}&V\ar[d]\\
X\ar[r]^{\sigma_{X}}&X}
$$
is commutative,

\item $\sigma_{V}$ is anti-holomorphic,

\item $\sigma_{V}$ is $\mathbb C$-anti-linear on the fibers of $V$, and

\item $\sigma_V^{2}\,=\,\id_{V}.$ 
\end{enumerate}

Such a pair $(V,\,\sigma_{V})$ will sometimes be referred to as a real holomorphic vector bundle. 
A homomorphism of real holomorphic vector bundles
$$f\,:\,(V,\,\sigma_{V})\,\longrightarrow\, (V',\,\sigma_{V'})$$ is an
${\mathcal O}_X$--linear homomorphism $f\,:\,V\,\longrightarrow\, V'$ satisfying the condition that
$$f\circ \sigma_{V}\,=\,\sigma_{V'}\circ f\, .$$ Real holomorphic vector bundles were introduced by 
Atiyah in \cite{Atiyah}, and they were subsequently studied in \cite {elliptic}, \cite{BGP}, \cite{BGH3}, \cite{LS:00}, \cite{SCH1} and \cite{SCH:122}.
The topological classes of these real bundles are described in \cite[Section 4]{BGH3}.

Let $G$ be a reductive complex affine algebraic group equipped with a real structure $\sigma_{G},$ and let $(X,\,\sigma_{X})$ be
a Klein surface. The center of $G$ will be denoted by $Z$.
Let $$Z_{2}^{\sigma_{G}}\, \subset\, Z$$ be the subgroup of $Z$ consisting of elements of $Z$ of order two 
(and the identity element) that are invariant under the involution
$\sigma_{G}$.
Take any $$c\,\in\, Z_{2}^{\sigma_{G}}\, .$$ Let $E$ be a holomorphic principal $G$-bundle over $X$. 

A $(\sigma_X,\sigma_G,c)$-\textbf{real structure} is a $C^\infty$ diffeomorphism
$\sigma_{E}\,:\,E\,\longrightarrow\, E$ such that the following four conditions holds:
\begin{enumerate}
\item the diagram
$$
\xymatrix{
E\ar[d]\ar[r]^{\sigma_E}&E\ar[d]\\
X\ar[r]^{\sigma_{X}}&X,}
$$
is commutative, or in other words $\sigma_{E}$ is a lift of $\sigma_{X}$,

\item $\sigma_{E}$ is anti-holomorphic,

\item $\sigma_{E}(eg)\,=\,\sigma_{E}(e)\,\sigma_{G}(g),$ for all $e\,\in\, E,\,g\,\in\, G\,,$ and

\item $\sigma_{E}^{2}(e)\,=\,ce\,.$
\end{enumerate}

A pair $(E,\,\sigma_{E})$ satisfying the above four conditions will
sometimes be referred to as a $(\sigma_X,\sigma_G,c)$-{\bf real holomorphic $G$-bundle}.

A morphism of $(\sigma_X,\sigma_G,c)$--real holomorphic $G$-bundles 
$$f\,:\,(E,\,\sigma_{E})\,\longrightarrow\, (E',\,\sigma_{E'})$$ is a holomorphic isomorphism of 
principal $G$-bundles $f\,:\,E\,\longrightarrow\, E'$ such that $$f\circ 
\sigma_{E}\,=\,\sigma_{E'}\circ f\,.$$ In the literature, such bundles are known as pseudo-real principal 
bundles (see \cite{BGH:98}, \cite{BGH:99} and \cite{BH:11}). In \cite[Theorem 3.9]{BGH:99}, 
there is a description of the topological classes of pseudo-real principal bundles.

\subsection{Chern correspondence and real structures}\label{subsection2.3}

It will be of importance for us to think of a holomorphic
principal bundle as a $C^\infty$ principal bundle equipped with a complex structure.
To explain this in detail, let $G$
be a reductive complex affine algebraic group equipped with a real structure $\sigma_{G}$, and let 
$(X,\,\sigma_{X})$ be a Klein surface. Take any $c\,\in\, Z_{2}^{\sigma_{G}}$, and let 
$\pi\,:\,\EE\,\longrightarrow\, X$ be a $C^\infty$ principal $G$-bundle over $X$. A 
$(\sigma_{X},\sigma_{G},c)$-{\bf real structure} $\sigma_{\EE}$ on $\EE$ is a lift of $\sigma_{X}$ to $\EE$
$$
\sigma_{\EE}\, :\, \EE\,\longrightarrow\, \EE
$$
satisfying the
conditions (1), (3) and (4) in the definition of a real structure of a holomorphic $G$-bundle given above. 
An {\bf almost complex structure} on $\EE$ is a $G$-invariant 
smooth section $J$ of $\End(T\EE)$, where $T\EE$ denotes the real tangent bundle of the total
space of $\EE$, such that
\begin{itemize}
\item $J^{2}\,=\,-\id\,,$ 

\item $d\pi \circ J\,=\,J_{X}\circ d\pi$, where $J_{X}$ is the complex structure of $X$, and

\item the map $\EE\times G\, \longrightarrow\, \EE$ giving the action of $G$ on $\EE$ is almost
holomorphic. This means that the differential of this map
$\EE\times G\, \longrightarrow\, \EE$
commutes with the almost complex structures; the almost
complex structure of $\EE\times G$ is given by $J$ and the almost complex structure of $G$.
\end{itemize}
It can be shown that an almost complex structure on $\EE$ is automatically a {\bf complex structure}.
Indeed, the obstruction to integrability of an almost complex structure on $\EE$ is an element
of $\Omega^{0,2}_X(\text{ad}(\EE))$, and it vanishes identically due to the fact that $\Omega^{0,2}_X\,=\, 0$
as the complex dimension of $X$ is one. Let $\CCC(\EE)$ be the space of all complex structures on
$\EE$. A complex structure $J_\EE\,\in\, \CCC(\EE)$ is called $\sigma_{\EE}$-\textbf{real} if
\begin{equation}\label{pr:cond1}
J_{\EE}\circ d\sigma_{\EE}\,=\,-d\sigma_{\EE}\circ J_{\EE}\, .
\end{equation}
We shall denote by $\CCC(\EE,\sigma_{\EE})$ the space of all complex structures on $\EE$ 
that are $\sigma_{\EE}$-real. The following proposition is a straight-forward fact.
 
\begin{Pro}\label{equivalenciaprincipal112}
There is a bijection between $\CCC(\EE,\sigma_{\EE})$ and the space of all equivalence classes of
$(\sigma_{X},\sigma_{G},c)$-real holomorphic principal $G$-bundles $(E,\,\sigma_{E})$ 
whose underlying smooth principal $G$-bundle is $\EE$, such that
the $C^\infty$ isomorphism between $E$ and $\EE$ takes $\sigma_E$ to $\sigma_{\EE}$. Two such
$(\sigma_{X},\sigma_{G},c)$-real holomorphic principal $G$-bundles $(E,\,\sigma_{E})$ and
$(E',\,\sigma'_{E})$ are called equivalent if the $C^\infty$ isomorphism of
principal $G$-bundles $E\, \longrightarrow\, E'$ given by the identity map of $\EE$ is holomorphic and
takes $\sigma_{E}$ to $\sigma'_{E}$.
\end{Pro}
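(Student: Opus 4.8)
The plan is to read off the bijection from the automatic-integrability statement recorded immediately before the proposition, together with the observation that the reality constraint \eqref{pr:cond1} is precisely the anti-holomorphicity of $\sigma_{\EE}$ with respect to the complex structure under consideration. So the whole argument splits into the underlying correspondence without reality, the identification of the extra condition, and a short check that the correspondence is a genuine bijection on equivalence classes.

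First I would recall the correspondence without reality. A complex structure $J_{\EE}\in\CCC(\EE)$ is automatically integrable, since the obstruction lies in $\Omega^{0,2}_X(\text{ad}(\EE))\,=\,0$; the condition $d\pi\circ J_{\EE}\,=\,J_X\circ d\pi$ together with the almost-holomorphicity of the $G$-action then makes the projection and the $G$-action holomorphic, so $J_{\EE}$ turns the $C^\infty$ bundle $\EE$ into a holomorphic principal $G$-bundle $E_{J_{\EE}}$. Conversely, any holomorphic principal $G$-bundle whose underlying smooth bundle is $\EE$ carries a distinguished element of $\CCC(\EE)$. Under this assignment the identity map of $\EE$ is holomorphic from $J$ to $J'$ if and only if $J\,=\,J'$, which is exactly the statement that $J_{\EE}\,\longmapsto\, E_{J_{\EE}}$ descends to a bijection between $\CCC(\EE)$ and holomorphic structures on $\EE$.

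Next I would identify the extra reality condition. A lift $\sigma_{\EE}$ of the anti-holomorphic map $\sigma_X$ is anti-holomorphic for $J_{\EE}$ exactly when $d\sigma_{\EE}\circ J_{\EE}\,=\,-J_{\EE}\circ d\sigma_{\EE}$, which is \eqref{pr:cond1}. By hypothesis $\sigma_{\EE}$ already satisfies conditions (1), (3) and (4) of a $(\sigma_X,\sigma_G,c)$-real structure on the smooth bundle $\EE$, and these do not involve the complex structure; hence requiring $J_{\EE}\in\CCC(\EE,\sigma_{\EE})$ supplies exactly the missing condition (2) and promotes $\sigma_{\EE}$ to a $(\sigma_X,\sigma_G,c)$-real structure on the holomorphic bundle $E_{J_{\EE}}$. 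This defines the map $J_{\EE}\,\longmapsto\,[(E_{J_{\EE}},\,\sigma_{\EE})]$ into the target set, the anti-linearity on fibers and the relations involving $\sigma_G$ and $c$ being carried over unchanged since $\sigma_{\EE}$ itself is untouched.

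Finally I would check bijectivity on equivalence classes. For injectivity, if $J$ and $J'$ both lie in $\CCC(\EE,\sigma_{\EE})$ and $(E_J,\sigma_{\EE})$, $(E_{J'},\sigma_{\EE})$ are equivalent, then by the definition of the equivalence relation the identity map of $\EE$ is holomorphic from $J$ to $J'$, forcing $J\,=\,J'$. For surjectivity, given a class $[(E,\sigma_E)]$ in the target, transporting the complex structure of $E$ to $\EE$ along the fixed $C^\infty$ isomorphism, which carries $\sigma_E$ to $\sigma_{\EE}$, produces a $J_{\EE}$, and the anti-holomorphicity of $\sigma_E$ translates into \eqref{pr:cond1}, so $J_{\EE}\in\CCC(\EE,\sigma_{\EE})$ and its image is the given class. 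I do not expect a genuine obstacle here; the only point needing care is the bookkeeping of the equivalence relation, namely that equivalence through the identity map collapses the target onto precisely the set of complex structures, so that the correspondence is a bijection rather than merely a surjection.
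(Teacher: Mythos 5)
Your proof is correct and is exactly the argument the paper has in mind: the paper states this proposition without proof, calling it ``a straight-forward fact,'' and your expansion — the standard identification of $\CCC(\EE)$ with holomorphic structures via automatic integrability, the observation that \eqref{pr:cond1} is precisely anti-holomorphicity of $\sigma_{\EE}$ for $J_{\EE}$, and the check that equivalence through the identity map forces $J\,=\,J'$ — supplies precisely the missing details.
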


Complex structures on $\EE$ are closely related to connections on $\EE$.
Let $\pi\, :\, \EE\, \longrightarrow\, X$ be a $C^\infty$ principal $G$-bundle
(here $G$ could be any Lie group). Recall that a {\bf connection} on $\EE$ 
is a horizontal 
distribution $H\,\subset\, T\EE$ such that, for all $e\,\in\,\EE$,
$$T_{e}\EE\,=\,(V\EE)_e\oplus H_{e}\,,$$
where $V\EE\,:=\, {\rm kernel}(d\pi)$ is the vertical tangent bundle for the projection
$\pi$, and the subbundle $H\,\subset\, T\EE$ is preserved by the action of the group $G$ on $T\EE$
given by the action of $G$ on $\EE$.
This is equivalent to having a $\mathfrak g$-valued $1$-form $A$ on $\EE$
(with values in $\mathfrak{g}$), that is, a smooth section of the bundle $T^*\EE\otimes \lieg$;
again we require this $A$ to be invariant under the action of $G$,
acting by a combination of the given action on $\EE$ and the adjoint action 
on $\lieg$ (equivalently, the map $T\EE\, \longrightarrow\, \lieg$ given by $A$ is
$G$-equivariant). Also, $A$ should restrict to the canonical right-invariant $\mathfrak g$-valued $1$-form
on the fibers of $\pi$. For details, see \cite[Ch.~II]{KN}.

Let now $$\rho\,:\, G\, \longrightarrow\, \GL(\VV)$$ be a
holomorphic representation of $G$ on a complex vector space 
$\VV$, and let $V\,=\,\EE(\VV)$ be the associated smooth complex vector bundle. Recall that a connection
$A$ on $\EE$ defines a {\bf covariant derivative} on $V$, that is a $\mathbb C$--linear map 
$$
d_A\,:\,\Omega^0(X,V)\,\longrightarrow \,\Omega^1(X,V)
$$
that satisfies the Leibniz rule; here $\Omega^p(X,V)$ denotes the space of all smooth sections of 
$V\otimes \bigwedge^p T^*X$.

Now, take $G$ to be a reductive complex affine algebraic group equipped with a 
real structure $\sigma_{G}$, and let $(X,\,\sigma_{X})$ be a Klein surface; also fix
an element $c\,\in\, Z_{2}^{\sigma_{G}}$.
 
Given a smooth principal $G$-bundle $\EE$ on $X$ equipped with a 
$(\sigma_{X},\sigma_{G},c)$-{\bf real structure} $\sigma_{\EE}$, we can impose a
reality condition for a connection on $\EE$. A connection $A$ on $\EE$ is said to be $\sigma_\EE$-{\bf compatible}
if the horizontal distribution $H\, \subset\,T\EE$ corresponding to $A$ satisfies the condition
$$
d(\sigma_{\EE}) (H_{e})\,=\, H_{\sigma_{\EE}(e)}\,,\ \ \text{for all}\ \, e\,\in\, \EE\, .
$$
This is equivalent to the condition that the corresponding
$\mathfrak g$-valued one-form $A\,:\,T\EE\,\longrightarrow\, \lieg$
is $(d\sigma_{\EE},\,d\sigma_{G})$-real, meaning
$$
 d\sigma_{G}\circ A\circ d\sigma_{\EE}\,=\, A\,.
$$
Let $\AAA(\EE)$ be the set of all connections on $\EE$. We shall denote by 
$$\AAA(\EE,\sigma_\EE)\,\subset\, \AAA(\EE)$$ the subset consisting of all
$\sigma_\EE$-compatible connections. 

Let $$K\,\subset\, G$$ be a maximal compact subgroup of $G$ preserved by the involution 
$\sigma_{G}$ (see Proposition \ref{max}). Therefore, the involution $\sigma_{\EE}$ of $\EE$ 
defines an involution of the quotient space $\EE/K$.
A $C^\infty$ reduction of structure group $h\, :\, X\, \longrightarrow\, \EE/K$
of the principal $G$-bundle $\EE$, from $G$ to $K$, is called $\sigma_{\EE}$-\textbf{compatible} if the image of 
$h$ is preserved by this involution of $\EE/K$. Note that $h$ is $\sigma_{\EE}$-compatible if and 
only if the reduction of structure group of $\EE$ to the subgroup $K$ $$\EE_{K}\, \subset\, \EE$$ 
corresponding to $h$ satisfies the condition $\sigma_{\EE}(\EE_K)\,=\,\EE_{K}$.

Let $$\EE_{K}\, \subset\, \EE$$ be a 
$\sigma_{\EE}$-compatible reduction of structure group to the maximal compact subgroup $K$ of 
$G$. We denote by $\sigma_{\EE_{K}}$ the restriction of the involution $\sigma_{\EE}$ to 
$\EE_{K}$. The space of all connections on the principal $K$-bundle $\EE_{K}$ will be denoted by 
$\AAA(\EE_K)$. Let $$\AAA(\EE_{K},\sigma_{\EE_{K}})\,\subset\, \AAA(\EE_K)$$ 
be the subset consisting of all $\sigma_{\EE_{K}}$-compatible connections. Recall that a
connection $\nabla\, \in\, \AAA(\EE_K)$ lies in $\AAA(\EE_{K},\sigma_{\EE_{K}})$ if and only
if the horizontal distribution ${\mathcal H}(A)\, \subset\, T\EE_K$ for $A$ is preserved
by the involution of $T\EE_K$ given by $\sigma_{\EE_{K}}$.

There is a natural bijection between the space of complex structures
$\CCC(\EE)$ on $E$ and the space of connections $\AAA(\EE_K)$ on $\EE_K$. 
This bijection is given by the {\bf Chern map}
\begin{equation}\label{ech}
C\,:\,\CCC(\EE)\,\longrightarrow\,\AAA(\EE_K)
\end{equation}
which is constructed as follows.
For a complex structure $J\, \in\, \CCC(\EE)$, the horizontal 
distribution on $\EE_K$ defined by the connection $C(J)$ is given 
by 
$$
J(T\EE_{K}))\cap T\EE_{K}\, \subset\, T\EE_K\, .
$$ 
For details see \cite[pp.~191--192, Proposition 5]{At1} and 
\cite[p. 586]{singer}.

If now $\EE$ has a $\sigma_\EE$-real structure, and $\EE_K$ is a 
$\sigma_\EE$-compatible reduction of structure group, we have the following.

\begin{Pro}\label{pr:dolbeautyconexiones}
The Chern map defines a bijection between $\CCC(\EE,\sigma_{\EE})$ and
$\AAA(\EE_{K},\sigma_{\EE_{K}})$.
\end{Pro}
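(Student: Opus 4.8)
The plan is to show that the Chern map $C\colon \CCC(\EE)\,\longrightarrow\,\AAA(\EE_K)$ of \eqref{ech} restricts to a bijection between the two distinguished subsets, by proving that $C$ intertwines the two reality conditions: a complex structure $J$ satisfies \eqref{pr:cond1} if and only if its Chern connection $C(J)$ is $\sigma_{\EE_K}$-compatible. Since $C$ is already a bijection $\CCC(\EE)\,\longrightarrow\,\AAA(\EE_K)$, it suffices to verify that $d\sigma_{\EE}$ carries the horizontal distribution $\mathcal H(C(J))\,=\,J(T\EE_K)\cap T\EE_K$ to the horizontal distribution $\mathcal H(C(J'))$ of the image complex structure, where $J'$ denotes the complex structure obtained by transporting $J$ through $\sigma_{\EE}$. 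The reality condition \eqref{pr:cond1} says precisely that $J$ is anti-commuted by $d\sigma_{\EE}$, so the whole argument reduces to a compatibility check between $d\sigma_{\EE}$, the splitting $J(T\EE_K)\cap T\EE_K$, and the fact that $\sigma_{\EE}$ preserves $\EE_K$.

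First I would record the elementary geometric facts that set up the computation. Because $\EE_K$ is $\sigma_{\EE}$-compatible, the differential $d\sigma_{\EE}$ preserves $T\EE_K\,\subset\, T\EE|_{\EE_K}$ and descends to an involution of $\EE_K$ covering $\sigma_X$; write $\psi\,:=\,d\sigma_{\EE}$ for brevity. Since $\sigma_X$ is anti-holomorphic, $\psi$ satisfies $J_X\circ d\pi\circ\psi\,=\,-\,d\pi\circ\psi\circ J_X$ at the level of the base, which is the origin of the sign in \eqref{pr:cond1}. I would then fix $J\,\in\,\CCC(\EE)$ and consider the image distribution $\psi(\mathcal H(C(J)))\,=\,\psi\big(J(T\EE_K)\cap T\EE_K\big)$. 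Using that $\psi$ is a linear isomorphism on each tangent space preserving $T\EE_K$, this equals $\psi J(T\EE_K)\cap T\EE_K$; and one rewrites $\psi J\,=\,(\psi J\psi^{-1})\psi$ so that the distribution becomes $(\psi J\psi^{-1})(\psi\,T\EE_K)\cap T\EE_K\,=\,(\psi J\psi^{-1})(T\EE_K)\cap T\EE_K$, again because $\psi$ preserves $T\EE_K$. Setting $J'\,:=\,\psi J\psi^{-1}$, this is exactly $\mathcal H$ associated to the complex structure $J'$ via the same formula defining the Chern map.

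The crux is now a purely pointwise identity. The reality condition \eqref{pr:cond1} reads $J\circ\psi\,=\,-\,\psi\circ J$, equivalently $\psi J\psi^{-1}\,=\,-J$ (using $\psi^2\,=\,\id$ on tangent vectors, which follows from $\sigma_{\EE}^2(e)\,=\,ce$ acting trivially on tangent spaces since right translation by the central element $c$ is an automorphism whose differential fixes the relevant directions). Thus $J'\,=\,-J$. The decisive observation is that $-J$ and $J$ determine the \emph{same} Chern connection: the horizontal distribution $J(T\EE_K)\cap T\EE_K$ is manifestly unchanged when $J$ is replaced by $-J$, because $J(W)\,=\,(-J)(W)$ as subspaces for any subspace $W$. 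Hence $\mathcal H(C(J'))\,=\,\mathcal H(C(J))$, and combining with the previous paragraph we obtain $\psi(\mathcal H(C(J)))\,=\,\mathcal H(C(J))$, which is precisely the statement that $C(J)$ lies in $\AAA(\EE_K,\sigma_{\EE_K})$. Running the equivalences backwards gives the converse: if $C(J)$ is $\sigma_{\EE_K}$-compatible then $\psi J\psi^{-1}$ and $J$ define the same horizontal distribution, and injectivity of the correspondence between complex structures on $\EE$ and their Chern connections forces $\psi J\psi^{-1}\,=\,-J$, i.e.\ \eqref{pr:cond1}.

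The main obstacle I anticipate is the careful bookkeeping of signs and of where each map acts, rather than any deep difficulty: one must be scrupulous that $d\sigma_{\EE}$ is $\mathbb C$-\emph{anti}linear on fibres so that it anti-commutes with the vertical complex structure, that its descent to the base is anti-holomorphic, and that these combine to give precisely the sign in \eqref{pr:cond1}; a sign error anywhere collapses the argument. A secondary point needing care is that the correspondence $J\,\longmapsto\,\mathcal H(C(J))$ is only injective as a map to distributions once one fixes that $C(J)$ is a genuine $K$-connection (the Chern map of \eqref{ech} supplies exactly this), so I would invoke the already-established bijectivity of \eqref{ech} rather than re-proving injectivity by hand. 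With these two points handled, the proof is a short diagram-chase of linear algebra on $T\EE_K$, and no analysis beyond the integrability remark already made in the text is required.
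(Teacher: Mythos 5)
Your proof is correct and follows essentially the same route as the paper's: both arguments come down to the observation that $d\sigma_{\EE}$ preserves $T\EE_K$ and that the anti-commutation \eqref{pr:cond1} changes $J$ only by a sign, which is invisible in the horizontal distribution $J(T\EE_{K})\cap T\EE_{K}$ (the paper phrases this via the uniqueness characterization of that distribution, you via a direct set-theoretic computation, and you settle the converse by invoking injectivity of the Chern map where the paper says ``similarly''). One cosmetic remark: the claim $\psi^2=\id$ is not quite right (the differential of $\sigma_{\EE}^2$ is that of right translation by $c$, an isomorphism $T_e\EE\to T_{ec}\EE$ rather than the identity), but your argument never actually needs it --- invertibility of $\psi$ alone gives $\psi J\psi^{-1}=-J$ from $J\psi=-\psi J$.
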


\begin{proof}
Let $J$ be a $\sigma_{\EE}$-real complex structure on $\EE$. Let $\widetilde{C(J)}\, \subset\, T\EE_K$
be the horizontal distribution for the connection $C(J)$ on $\EE_K$, where $C$ is the map in \eqref{ech}.
So $\widetilde{C(J)}$ is the unique subbundle of $T\EE_K$ satisfying the following three conditions:
\begin{enumerate}
\item $J(\widetilde{C(J)})\,=\, \widetilde{C(J)}$, and

\item $\widetilde{C(J)}\oplus \text{kernel}(d\pi')\,=\, T\EE_K$, where $\pi'\, :\,
\EE_K\, \longrightarrow\, X$ is the natural projection, and

\item $\widetilde{C(J)}$ is preserved by the action of $K$ on $\EE_K$.
\end{enumerate}

For any $e\, \in\, \EE_K$,
if $Y\,\in\, \widetilde{C(J)}_{e}$, then $Y\,=\,J(Z),$ for some $Z\,\in\, \widetilde{C(J)}_{e}$. 
By \eqref{pr:cond1} we have
$$d\sigma_{\EE}(Y)\,=\,d\sigma_{\EE}(J(Z))\,=\,J (d\sigma_{\EE}(-Z))\,\in
\, J (T_{\sigma_{\EE}(e)}(\EE_{K}))\, ,$$
but we also have
$d\sigma_{\EE}(Y)\,\in\, T_{\sigma_{\EE}(e)}(\EE_{K})$. So $d\sigma_{\EE}(\widetilde{C(J)}_{e})$
is also preserved by $J(\sigma_{\EE}(e))\, \in\, \text{GL}(T_{\sigma_{\EE}(e)}\EE)$. Consequently, from
the above uniqueness property of $\widetilde{C(J)}$ it follows that
$$
d\sigma_{\EE}(\widetilde{C(J)})\,=\, \widetilde{C(J)}\, ;
$$
recall that $J$ is preserved by the action of $G$ on $\EE$.
Therefore, we conclude that $C(J)\, \in\, \AAA(\EE_{K},\sigma_{\EE_{K}})$.

The converse is proved similarly.
\end{proof}

Let $X$ be a Riemann surface with complex structure $J_{X}$ and equipped with a real structure 
$\sigma_{X}$. Let $G$ be a reductive complex affine algebraic group equipped
with real structure $\sigma_{G}$, and let $\VV$ be a complex vector space equipped 
with real structure $\sigma_{\VV}$. Let 
$\rho\,:\,G\,\longrightarrow \,\GL(\VV)$ be a $(\sigma_{G},\,\sigma_{\VV})$-compatible holomorphic
representation. Let $\EE$ be a $C^\infty$ principal $G$-bundle over $X$ equipped with a 
$(\sigma_{X},\sigma_{G},c)$-real structure $\sigma_{\EE}$ as well as a complex structure $J$. The 
associated vector bundle $V\,=\,\EE(\VV)$ is equipped with the following structures:
\begin{itemize}
\item an involution 
$\sigma_{V}$, induced by $\sigma_{\EE}$ and $\sigma_{\VV}$, and

\item a complex structure $J_V$ whose Dolbeault operator is given by 
$$\overline{\partial}_{J_{V}}\varphi\,:=\, \frac{d\varphi+J_{V}\circ d\varphi \circ J_{X}}{2}$$ for all 
$\varphi\,\in\, \Omega^{0}(X,V)$ (considered as a map $\varphi\, :\, X\, \longrightarrow\, V$
between complex manifolds).
\end{itemize}
If the complex structure $J$ on $\EE$
is $\sigma_{\EE}$-real, then $\overline{\partial}_{J_{V}}$ anti-commutes with $J_{V}$, which is equivalent
to the condition that $(V,\, \sigma_{V})$ is a real holomorphic vector bundle;
see Proposition \ref{equivalenciaprincipal112}.
If $\EE$ is equipped with a $\sigma_{\EE}$-compatible reduction of structure
group $\EE_K\, \subset\, \EE$ to the maximal
compact subgroup $K$, then
$A\,:=\,C(J)$ is a $\sigma_{\EE_{K}}$-compatible connection, defining 
a covariant derivative $d_A$ on $V$ which is related to the Dolbeault operator by
$$\overline{\partial}_{J_{V}}\varphi\,=\,\overline{\partial}_{A}\varphi\,:=\,\pi^{0,1} d_A\varphi$$
for $\varphi\,\in\, \Omega^{0}(X,V)$, where $\pi^{0,1}$ is the projection of complex $1$-forms to
$(0,\, 1)$-forms.

\begin{Exam}[{Compatible Hermitian metric}]
Let $\EE$ be a $C^{\infty}$ principal $\GL(n,\CC)$-bundle, and let $V\,:=\,\EE(\CC^{n})$
be the associated complex vector bundle of rank $n$ on $X$ for the standard representation
of $\GL(n,\CC)$ on $\CC^{n}$. Let $\sigma_{\GL(n,\CC)}$ be the real structure
on $\GL(n,\CC)$ that sends any matrix $A$ to its conjugate matrix $\overline{A}$. Then a 
$(\sigma_{X},\sigma_{GL(n,\CC)},c)$-real structure $\sigma_{\EE}$ on $\EE$ corresponds to a
$\sigma_{X}$-real structure $\sigma_{V}$ on $V$.
Giving a $\sigma_{\EE}$-compatible reduction on $\EE$ to ${\rm U}(n)$ is equivalent to giving an Hermitian structure
$h\,:\,V\,\longrightarrow\, \overline{V}^{*}$ on $V$ such that the diagram
\begin{equation}\label{eq:metricahermiticadual}
\xymatrix{
V\ar[d]^{\sigma_{V}}\ar[r]^{h}&\overline{V}^{*}\ar[d]^{\overline{\sigma}_{V^{*}}}\\
V\ar[r]^{h}&\overline{V}^{*}}
\end{equation}
commutes, where $\overline{\sigma}_{V^{*}}$ is the transpose of the conjugate of $\sigma_{V}$.
This condition is equivalent to the condition that fiberwise $\sigma_V$ is an isometry.

In this situation, the Chern correspondence defines a bijection 
between the set of $\sigma_V$-compatible Dolbeault operators on $V$ and 
the set of all $\sigma_V$-compatible unitary connections on $(V,\,h)$.
\end{Exam}

\section{Real twisted Higgs pairs, stability and equations}\label{Section3}

\subsection{Real twisted Higgs pairs}\label{Subsection3.1}

Let $X$ be a compact connected Riemann surface, and let $G$ be a connected reductive complex 
affine algebraic group. Let $\VV$ be a complex vector space and $\rho\,:\, G\,\longrightarrow 
\,\GL(\mathbb{V})$ a holomorphic representation. Let $E$ be a holomorphic principal $G$-bundle
on $X$. We denote by $V$ the holomorphic vector bundle 
$E(\mathbb{V})$ associated to $E$ via $\rho$. Let $L$ be a holomorphic line bundle over $X$.

A $L$-\textbf{twisted Higgs pair of type $\rho$} is a pair $(E,\,\varphi)$ consisting of a 
holomorphic principal $G$-bundle $E$ over $X$ and a holomorphic section $\varphi$ of the 
holomorphic vector bundle $V\otimes L$, where $V$ is defined above.

Let $\sigma_X$ and $\sigma_G$ be real structures on $X$ and $G$, respectively. Let $\sigma_{\VV}$ be 
a real structure on $\VV$ and $\rho\,:\, G\,\longrightarrow \,\GL(\mathbb{V})$ a 
$(\sigma_{G},\sigma_{\VV})$-compatible holomorphic representation. This means that
\begin{equation}\label{ez1}
\rho(g)(\sigma_{\VV}(v)) \,=\, (\rho(\sigma_G (g)))(v)
\end{equation}
for all $g\, \in\, G$ and $v\, \in\, \VV$. Let $\sigma_E$ be a real structure on $E$.

Let $Z$ be the center of $G$, and let $$Z_{2}^{\sigma_{G}}\, \subset\, Z$$
be the subgroup consisting of all $t\, \in\, Z,$ such that
\begin{itemize}
\item the order of $t$ is two (if $t$ is not the identity element), and

\item $\sigma_{G}(t)\,=\, t$.
\end{itemize}
Take any $c\,\in\, 
Z_{2}^{\sigma_{G}}\bigcap\ker{\rho}$. Let $(E,\,\sigma_{E})$ be a 
$(\sigma_{X},\sigma_{G},c)$-real principal $G$-bundle over $(X,\,\sigma_X)$.

Consider the map $E\times \VV\, \longrightarrow\, E\times\VV$ that sends any $(e,\, v)$ to
$(\sigma_E(e),\,\sigma_\mathbb{V}(v))$. This map descends to
an anti-holomorphic map $$\sigma_V\,:\,V\,\longrightarrow\, V\, ,$$
because $\rho$ satisfies \eqref{ez1}. This map $\sigma_V$
defines a real structure on the vector bundle $V$, because $c\,\in \,\Ker(\rho)$. 
Let $(L,\,\sigma_{L})$ be a real holomorphic
line bundle over $(X,\,\sigma_X)$. The tensor 
product $\sigma_{V}\otimes\sigma_{L}\, :\, V\otimes L \,\longrightarrow\, V\otimes L$
is clearly a real structure on $V\otimes L$. 

A $(\sigma_{X}, \sigma_{G},c,\sigma_{L},\sigma_{\VV},\pm)$-\textbf{real structure} on a 
$L$-twisted Higgs pair $(E,\,\varphi)$ of type $\rho$ is an anti-holomorphic map $\sigma_E\,:\, 
E\,\longrightarrow\, E$ such that $(E,\,\sigma_E)$ is a $(\sigma_{X},\sigma_{G},c)$-real 
holomorphic principal $G$-bundle over $(X,\sigma_X)$, and the section $\varphi\, \in\, H^0(X,\,
V\otimes L)$ satisfies the equation
\begin{equation}\label{eq:realhiggs}
\sigma_{V}\otimes\sigma_{L}(\varphi)\,=\,\pm\varphi\circ\sigma_{X}\,=:\, \pm \sigma^*_X\varphi\, .
\end{equation}
Denote by $\pmb{\sigma}\,=\,(\sigma_{X}, \sigma_{G},c,\sigma_{L}, \sigma_{\VV},\pm)$. The triple 
$(E,\varphi,\sigma_E)$ will sometimes be referred to as a $\ssigma$-\textbf{real twisted Higgs 
pair of type $\rho$}.

We give some concrete examples of real Higgs pairs.
 
\begin{Exam}\label{ex:realHiggsbundles}
Let $(G,\,\sigma_{G})$ be a reductive complex affine algebraic group equipped with an anti-holomorphic 
involution. Let $(L,\,\sigma_{L})\,=\,(K_{X},\,\sigma_{K_{X}})$ be the canonical line
bundle of $X$ equipped with 
the anti-holomorphic involution induced by $\sigma_{X}.$ Set $(\VV,\,\sigma_{\VV})\,=\,(\gG,\, 
d\sigma_{G})$. The adjoint representation $\Ad\,:\,G\,\longrightarrow \,\GL(\gG)$ is 
$(\sigma_G,\,\sigma_{\VV})$-real. In this case, $\ssigma$-real $K_{X}$-twisted Higgs pairs of 
type $\Ad$ are known in the literature as \textbf{pseudo-real $G$-Higgs bundles} (see 
\cite{BGH:98}, \cite{BGH:99} and \cite{BH:11}).
\end{Exam}

\begin{Exam} Let $G\,=\,\GL(n,\CC),$ $\VV\,=\,\Sym^{2}(\CC^{n})$ and $\rho$ the 
representation on $\VV$ induced by the standard 
representation of $\GL(n,\CC)$ on $\CC^n$. Let $L$ be a holomorphic line bundle on $X$.
A $L$-twisted Higgs pairs of type 
$\rho$ is an $L$-quadratic pair. They are also known in the literature as conic bundles or 
quadric bundles (see, for example, \cite{GGM2,Oliveira}). If we equip $G$ and $\VV$ with complex 
conjugations $\sigma_{G}(A)\,=\,\overline{A}$ and $\sigma_{\VV}(v)\,=\,\overline{v}$, for
all $A\,\in\,\GL(n,\CC)$ and 
$v\,\in\, \VV$, then $\rho$ is a $(\sigma_{G},\,\sigma_{\VV})$-compatible representation, thus
defining a real structure $\sigma_V$ on $V$. Let $\sigma_{L}$ 
be a real structure on $L$. Then $\ssigma$-real $L$-twisted Higgs pairs of type $\rho$ 
are \textbf{real $L$-quadratic pairs} $(V,\,\varphi,\,\sigma_{V})$; by this we mean that 
$(V,\,\sigma_{V})$ is a real holomorphic vector bundle of rank $n$, and $\varphi\,:\,V
\,\longrightarrow\, V^{*}\otimes L$ 
is a real section, meaning that $\varphi\circ \sigma_{V}\,=\,\sigma_{V}^{t}\otimes \sigma_{L}\circ 
\varphi.$
\end{Exam}

\begin{Exam}
Let $G\,=\,\GL(n_1,\CC)\times \GL(n_2,\CC),~$ $\VV\,=\,\Hom(\CC^{n_2},\,\CC^{n_1})$, $\rho$ 
is the representation of $G$ given by $\rho(A,B)(T)\,=\,A\circ T\circ B^{-1},$ and $L$ is the 
trivial holomorphic line bundle $\Oo_X$. The
$L$-twisted Higgs pairs of type $\rho$ are known in the literature as holomorphic triples 
(see, for example, \cite{bradlowtriples}).
We equip $G$ and $\VV$ with real structures $\sigma_{G}$, $\sigma_{\VV}$,
defined by $\sigma_{G}(A\times B)\,=\,\overline{A}\times \overline{B}$ and 
$\sigma_{\VV}(T)\overline{v}\,=\,\overline{Tv}$, for all $A\,\in\,\GL(n_1,\CC),$ $B\,\in\,\GL(n_2,\CC),$ $T\,\in\, 
\Hom(\CC^{n_2},\,\CC^{n_1})$ and $v\,\in\,
\CC^{n_2}$. Then $\rho$ is a $(\sigma_{G},\,\sigma_{\VV})$-compatible representation. We 
equip $\Oo_X$ with the real structure induced by
a real structure $\sigma_{X}$ on $X$. The $\ssigma$-real $\Oo_X$-twisted Higgs 
pairs of type $\rho$ are \textbf{real holomorphic triples},
that consist of two real holomorphic vector bundles $(V_{1},\,\sigma_{V_{1}})$, 
$(V_{2},\,\sigma_{V_{2}})$ and a real morphism 
$\phi\,:\,(V_{1},\,\sigma_{V_{1}})\,\longrightarrow\,(V_{2},\,\sigma_{V_{2}}).$
\end{Exam}

\subsection{Stability}\label{Subsection3.3}

Let $K$ be a $\sigma_G$-invariant maximal compact subgroup of a 
connected reductive complex affine algebraic group $G$,
and let
$$
B\, \in\, \text{Sym}^2({\mathfrak g}^*)^G
$$
be a $K$-invariant non-degenerate bilinear form on the Lie algebra ${\mathfrak g}\,=\,\Lie(G)$ which
is positive on ${\mathfrak k}\,=\,\Lie(K).$ 
Let $s$ be an element of $\sqrt{-1}\cdot\mathfrak{k}$; we note that the following objects are associated to $s$:
\begin{itemize} 
\item $P_{s}\,:=\,\{g\,\in\, G\, \mid\, e^{ts}ge^{-ts}~\text{ is bounded when }~t\rightarrow\infty\}$
is a parabolic subgroup of $G$,

\item $L_{s}\,:=\,\{g\,\in\, P_{s}\,\mid\, \Ad(g)s\,=\,s\}$ is a Levi subgroup of the
parabolic subgroup $P_s$,
where $\Ad\,:\,G\,\longrightarrow \,\Aut(\gG)$ is the Adjoint representation,

\item $\mathfrak{p}_{s}\,:=\,\{x\,\in\, \mathfrak{g}\,\mid\, \Ad(e^{ts})x~\text{ is bounded when }~
t\rightarrow\infty\}$ is the Lie algebra of $P_s$,

\item $\mathfrak{l}_{s}\,:=\,\{x\,\in\, \mathfrak{g}\,\mid\, \left[x,\,s\right]\,=\,0\}$ is the
Lie algebra of $L_s$,

\item $\mathbb{V}_{s}\,:=\,\{v\,\in\, \mathbb{V}\,\mid\,\rho(e^{ts}v)~\text{ is bounded when }
~t\rightarrow\infty\}$,

\item $\mathbb{V}_{s}^{0}\,:=\,\{v\,\in\, \mathbb{V}\,\mid\, \rho(e^{ts}v)\,=\,v~ \text{ for all }~t\}$, and

\item $\chi_{s}$ is the dual of $s$ with respect to $B$, meaning that $B$ induces an isomorphism 
$B\,:\,\kK\,\longrightarrow\, \kK^{*}$, using which we have 
$\chi_{s}\,:=\,B(s)|_{\pP_s\cap \kK}$.
\end{itemize}

\begin{Rem}\label{rem:defis_chi}
A character of $\pP_{s}$ is a complex linear map $\pP_{s}\,\longrightarrow \,\CC$ that factors
through the quotient $\pP_{s}/[\pP_{s},\,\pP_{s}].$
Since $B(s,\,[\pP_s,\,\pP_s])\,=\,0$, the above defined homomorphism $\chi_{s}$ produces
a strictly anti-dominant character of $\pP_s.$ Conversely, given an anti-dominant character
$\chi$ of $\pP_s$, we have $\chi\,\in\, (\pP_{s}/[\pP_s,\,\pP_s])^{*}\,\cong\,(\zZ_{L_{s}})^{*},$
where $\zZ_{L_{s}}$ is the center of the Levi subalgebra $\lL_s.$ For
$s_{\chi}\,=\,B^{-1}(\chi)$, we have $s_{\chi}\,\in\, \zZ_{L_{s}}\,\subset \,\sqrt{-1}\cdot\kK.$
\end{Rem}

We now recall the definition of the degree of a
principal $G$-bundle $E$ with respect to an element $s\,\in\, \sqrt{-1}\cdot\kK$ and
a holomorphic reduction $\sigma$ of the structure group of $E$ from $G$ to $P_s$.
Let $E_{P_s}\, \subset\, E$ be the holomorphic principal $P_s$-bundle corresponding to $\sigma$. 
By \cite[Lemma 2.4]{GGM:21}, there is a rational number $q$ such
that $(\chi_{s})^{q}$ lifts to a character $\widehat{\chi}_s$ of $P_s$. Then 
$E_{P_{s}}(\widehat{\chi}_s)$ is a line bundle. Now define the degree
\begin{equation}
\label{eq:deg}
\deg E(\sigma,\, s)\,:=\,\frac{1}{q}\deg E_{P_{s}}(\widehat{\chi}_s)\,.
\end{equation}

Let $\sigma_{G}$ be a conjugation on $G$, and let $(E,\,\sigma_{E})$ be a $(\sigma_{X}, 
\sigma_{G},c)$-real holomorphic principal $G$-bundle over $X$. We denote by $\Ad(E)$ the group scheme 
$E\times_{\Ad}G$ over $X$, which is associated to $E$ via the adjoint action of $G$
on itself. The real structures $\sigma_{E}$ and $\sigma_{G}$ together define a 
real structure $\sigma_{\Ad(E)}$ on $\Ad(E)$, because the homomorphism $\Ad\,:\, G\,\longrightarrow\, \Aut(G)$
given by the adjoint action is compatible with $\sigma_{G}$ (see Example \ref{adjointrep}).

The main change in the
definition of stability for $\ssigma$-real $L$-twisted Higgs pairs
vis-\`a-vis the definition of stability of the underlying Higgs pairs
is that we must consider only holomorphic
reductions $E_{P_s} \,\subset\, E$ of $E$ from $G$ to $P_s$ such that 
\begin{equation}\label{eq:stabilitycondition}
\sigma_{\Ad(E)}(\Ad(E_{P_{s}}))\,=\,\Ad(E_{P_{s}})\, .
\end{equation}
More specifically, for $\alpha\,\in\, \mathfrak{z}(\kK)$, a $\ssigma$-real $L$-twisted Higgs
pair $(E,\varphi,\, \sigma_E)$ is
\begin{itemize}
\item $\alpha$-\textbf{semistable} if for every $s\,\in\, \sqrt{-1}\kK$ and for every holomorphic
reduction of structure group $\sigma$ of $E$ to $P_{s}$ such that
\begin{enumerate}
\item the holomorphic principal $P_s$-bundle $E_{P_s}\, \subset\, E$ corresponding to $\sigma$ satisfies
the equation in \eqref{eq:stabilitycondition}, and

\item $\varphi\,\in\, H^{0}(X,\,E_{P_s}(\mathbb{V}_{s})\otimes L)$,
\end{enumerate}
the following inequality holds:
$$\deg E(s,\sigma)-B(s,\alpha)\,\geq\, 0\, ;$$ 

\item $\alpha$-\textbf{stable} if for every $s\,\in\, \sqrt{-1}\kK\setminus \Ker(d\rho)$ 
and for every reduction $\sigma$ of structure group of $E$ to $P_{s}$ satisfying 
(\ref{eq:stabilitycondition}) as well as the condition
$\varphi\,\in\, H^{0}(X,\,E_{P_s}(\mathbb{V}_{s})\otimes L)$, the inequality
$$\deg E(s,\sigma)-B(s,\alpha)\,>\,0$$ 
holds;

\item $\alpha$-\textbf{polystable} if it is $\alpha$-semistable and furthermore, for every 
reduction $\sigma$ of structure group of $E$ to a
parabolic subgroup $P_{s}$ satisfying (\ref{eq:stabilitycondition}) 
as well as the two conditions $\varphi\,\in\, H^{0}(X,\, E_{P_s}(\mathbb{V}_{s})\otimes L)$ and
$$\deg E(s,\,\sigma)-B(s,\,\alpha)\,=\,0\,,$$ there is a holomorphic reduction of
structure group
\begin{equation}\label{els}
E_{L_s}\, \subset\, E_{P_s}
\end{equation}
of $E_{P_s}$ to the Levi subgroup $L_{s}\, \subset\, P_s$ such that
$$\sigma_E(E_{L_s})\,=\, E_{L_s}$$ and $\varphi\,\in\, 
H^{0}(X,\, E_{L_s}(\mathbb{V}^{0}_{s})\otimes L)$.
\end{itemize}

\begin{Rem} If the condition in (\ref{eq:stabilitycondition}) is dropped, then we obtain the
usual definition of stability for the $L$-twisted Higgs pair $(E,\, \varphi)$ underlying the triple
$(E,\,\varphi,\,\sigma_{E})$. The naive condition for $\ssigma$-real $L$-twisted Higgs pairs,
considering only those parabolic subgroups $P_{s}$ such that $\sigma_{G}(P_s)\,=\,P_s\,$, is not
the right one for stability, since there are some cases (such as the compact real form of $G$)
for which there are no $\sigma_{G}$-invariant parabolic
subgroups and, therefore, such a stability condition would be trivially satisfied.
\end{Rem}
 
\begin{Rem}\label{rem:weaker}
Let $(E, \,\varphi,\, \sigma_E)$ be a $\ssigma$-real $L$-twisted 
Higgs pair. If the underlying Higgs pair $(E,\, \varphi)$ is $\alpha$-stable
(respectively, $\alpha$-semistable), then $(E,\,\varphi,\, \sigma_E)$ is 
obviously $\alpha$-stable (respectively, $\alpha$-semistable).
\end{Rem}

\subsection{Hermite--Einstein--Higgs equation}\label{sec:notation}\label{Subsection3.2}

Let $G$ be a connected reductive complex affine algebraic group equipped with a real structure 
$\sigma_{G}\,$. Let $K\,\subset\, G$ be a maximal compact Lie subgroup preserved by the 
involution $\sigma_G\,$ (see Proposition \ref{max}). Fix a non-degenerate $G$-invariant bilinear form
$$
B\, \in\, \text{Sym}^2({\mathfrak g}^*)^G
$$
which is
\begin{itemize}
\item positive on ${\mathfrak k}\,=\,\Lie(K)$, and

\item compatible with $d\sigma_{G}$, meaning that $(d\sigma_{G})^{*}B\,=\, B$.
\end{itemize}
Let $(X,\,\sigma_{X})$ be a compact Klein surface, and let $\omega$ be a $\sigma_{X}$-real K\"ahler form 
on $X$. Let $(L,\,\sigma_{L})$ be a real holomorphic line bundle over $(X,\,\sigma_{X})$. Fix a 
$\sigma_{L}$--compatible Hermitian metric $h_{L}$ on $L$. We denote by $F_L$ the 
curvature of the corresponding Chern connection on $L$. Let $\VV$ be a complex vector space 
equipped with a real structure $\sigma_{\VV}$, and let
\begin{equation}\label{er}
\rho\,:\, G\,\longrightarrow\, \GL(\mathbb{V})
\end{equation}
be a $(\sigma_{G},\sigma_{\VV})$-real holomorphic representation. Let $h_\VV$ be a $K$-invariant Hermitian
inner product on the vector space $\VV$ compatible with $\sigma_{\VV}$, 
meaning $h_{\VV}(\sigma_{\VV}(v),\,\sigma_{\VV}(v'))=\,h_{\VV}(v,\,v')$ for 
all $v,\, v'\,\in\, \VV\,.$

As above, take a holomorphic principal $G$-bundle $E$ on $X$ equipped with a real structure 
$\sigma_{E}$. Let $h$ be a $\sigma_{E}$--compatible reduction of the structure group of $E$ from 
$G$ to the subgroup $K$ (see Section \ref{subsection2.3}). The resulting real $K$-bundle 
will be denoted by $(E_K,\sigma_{E_K})$. Let $$A_{h}\,\in\, \AAA(E_{K},\sigma_{E_{K}})$$
be the unique $\sigma_{E_{K}}$-connection on $E_{K}$ compatible with 
the holomorphic structure of $E$ (see Proposition \ref{pr:dolbeautyconexiones}). Let $F_{h}\,\in\, \Omega^{1,1}((E_{K}(\kK))$ be the
curvature of $A_{h}$. 

The associated holomorphic vector bundle $V\,:=\,E(\mathbb{V})$ has a Hermitian structure $h_{V}$ 
which is constructed using the reduction $h$ and the Hermitian structure $h_{\VV}$ on $\VV$. Since $h_{\VV}$ is $K$-invariant and compatible with 
$\sigma_{\VV},$ this Hermitian metric $h_{V}$ is in fact $\sigma_V$--compatible,
where $\sigma_V$ is the real structure on $V$ defined by $\sigma_G$ and $\sigma_\VV$. Note that the 
Hermitian structure $$h_{V\otimes L}\,:=\,h_{V}\otimes h_{L}$$ on $V\otimes L$ is 
compatible with the involution $\sigma_{V}\otimes\sigma_L$. The 
representation $\rho$ in \eqref{er} induces a linear map
$$
\xymatrix{
\uU(\mathbb{V})^{*}\ar@/^2pc/[rr]^{Q}\ar[r]^{(d\rho)^{*}} &\mathfrak{k}^{*}\ar[r]^{B} &\mathfrak{k}\,,}
$$
where $\uU(\mathbb{V})\, \subset\, \text{End}({\mathbb V})$ is the
subalgebra consisting of the skew-Hermitian endomorphisms. This homomorphism
$Q$ produces a homomorphism of vector bundles associated to the principal $K$-bundle $E_K$
$$\widetilde{Q}\,:\, E_K(\uU(\VV))^{*}
\,\longrightarrow\, E_K(\mathfrak{k})\,.$$
The pairing $$\text{trace}\, :\, E_K(\uU(\VV))\otimes E_K(\uU(\VV))\, 
\longrightarrow\, X\times\mathbb C\, ,$$
being nondegenerate, identifies $E_K(\uU(\VV))$ with the dual vector bundle $E_K(\uU(\VV))^{*}$. Using this
identification, the above homomorphism $\widetilde{Q}$ would be considered as a homomorphism
\begin{equation}\label{qp}
\widetilde{Q}\,:\, E_K(\uU(\VV)) \,\longrightarrow\, E_K(\mathfrak{k})\,.
\end{equation}

Now take any $L$-twisted Higgs field
$$
\varphi\, \in\, H^0(X,\, V\otimes L)
$$
on $E$. It produces a $C^\infty$ section
$$
\widehat{\varphi} \, \in\, C^\infty(X,\, E_K(\uU(\VV)))
$$
as follows: Consider the $C^\infty$ section
$$
\overline{\varphi}\, \in\, C^\infty(X,\, \overline{V\otimes L})\,=\,
C^\infty(X,\, \overline{V}\otimes\overline{L})\, .
$$
Note that
$$\varphi\otimes \overline{\varphi}\, \in\,
C^\infty(X,\, V\otimes L)\otimes
C^\infty(X,\, \overline{V}\otimes\overline{L})
$$
produces a $C^\infty$ section
\begin{equation}\label{evp}
\varphi'\, \in\, C^\infty(X,\, (V\otimes\overline{V})\otimes (L\otimes\overline{L}))\, .
\end{equation}
Now, the Hermitian structure $h_L$ on $L$ identifies $\overline{L}$ with $L^*$ (it
is a $C^\infty$ isomorphism), and
the Hermitian structure $h_V$ on $V$ identifies $\overline{V}$ with $V^*$. Therefore,
using the natural pairing $L\otimes L^*\, \longrightarrow\, {\mathcal O}_X$, the
$C^\infty$ section $\varphi'$ in \eqref{evp} produces a $C^\infty$ section
\begin{equation}\label{vpp}
\varphi''\, \in\, C^\infty(X,\, V\otimes V^*)\, .
\end{equation}
It is straightforward to check that the section $\varphi''$ is
pointwise Hermitian (same as self-adjoint), meaning $\varphi''(x)\, \in\, \text{End}(V_x)$
is Hermitian for all $x\, \in\, X$.

To see that the section $\varphi''$ is pointwise Hermitian, let $\mathbb W$ be a finite dimensional
complex vector space equipped with a Hermitian structure $h_{\mathbb W}$. Take any $v\, \in\, \mathbb W$.
Note that $v$ produces an endomorphism
$$
F_v\, \in\, \text{End}({\mathbb W})\, =\, {\mathbb W}\otimes{\mathbb W}^*
$$
defined by
$$
F_v(z)\,:=\, h_{\mathbb W}(z,\, v)\cdot v
$$
for all $z\, \in\,{\mathbb W}$. Now for all $z,\, w\, \in\,{\mathbb W}$, we have
$$
h_{\mathbb W}(F_v(z),\, w)\,=\, h_{\mathbb W}(h_{\mathbb W}(z,\, v)\cdot v,\, w)\,=\, h_{\mathbb W}(z,\, v)\cdot
h_{\mathbb W}(v,\, w)
$$
$$
=\, \overline{h_{\mathbb W}(w,\, v)}\cdot h_{\mathbb W}(z,\, v)
\,=\, h_{\mathbb W}(z,\, h_{\mathbb W}(w,\, v)\cdot v)\,=\, h_{\mathbb W}(z,\, F_v(w))\, .
$$
Consequently, $F_v\, \in\, \text{End}({\mathbb W})$ is Hermitian. This immediately implies that
the section $\varphi''$ in \eqref{vpp} is pointwise Hermitian, and hence we conclude that
\begin{equation}\label{vpp2}
\widehat{\varphi} \, :=\, \frac{\sqrt{-1}}{2}\varphi'' \, \in\, C^\infty(X,\, E_K(\uU(\VV)))\, .
\end{equation}

To describe $\widehat{\varphi}$ in \eqref{vpp2} more concretely, if $\varphi$
is locally of the form $\varphi_V\otimes\varphi_L$, where $\varphi_V$ and $\varphi_L$ are
locally defined $C^\infty$ sections of $V$ and $L$ respectively, then
$$
\widehat{\varphi}(\mathbf{s})\,:=\, \frac{\sqrt{-1}}{2}h_V(\mathbf{s},\, \varphi_V)
h_L(\varphi_L,\, \varphi_L)\cdot \varphi_V\, ,
$$
where $\mathbf{s}$ is any locally defined $C^\infty$ section of $V$. It is straight-forward
to check that the above expression of $\widehat{\varphi}$ does not depend on the choice
of the local decomposition $\varphi\,=\, \varphi_V\otimes\varphi_L$.

Note that
\begin{equation}\label{z1}
\widehat{c\varphi}\,=\, |c|^2\widehat{\varphi}
\end{equation}
for all $c\, \in\, \mathbb C$.

Now define
\begin{equation}\label{def:muh}
\mu_h(\varphi)\,:=\,\widetilde{Q}(-\widehat{\varphi})\, \in\, C^\infty(X,\,E_K(\mathfrak{k}))\, ,
\end{equation}
where $\widetilde{Q}$ is the homomorphism in \eqref{qp}. From \eqref{z1} it follows
that
$$
\mu_h(c\varphi)\,:=\, |c|^2 \mu_h(\varphi)
$$
for all $c\, \in\, \mathbb C$.

Let $(E,\,\varphi,\,\sigma_{E})$ be a $\ssigma$-real $L$-twisted Higgs pair of type $\rho$.
The center of the Lie algebra $\kK$ is denoted by $\mathfrak{z}(\kK)$.
Take a central element $\alpha\,\in\, \mathfrak{z}(\kK)$ such that $d\sigma_{G}(\alpha)\,=\,-\alpha$. 
A $\sigma_{E}$-compatible reduction $h$ of the structure group of $E$ from $G$ to $K$ is
called \textbf{Hermite--Einstein--Higgs} if it satisfies the equation
 \begin{equation}\label{eq:Einstein}
 \Lambda\,F_h+\mu_h(\varphi)\,=\,-\sqrt{-1}\,\alpha\,,
 \end{equation} 
where $\Lambda$ denotes the contraction of differential forms on $X$ with $\omega$, and $\mu_h$ is the function
in \eqref{def:muh}. Note that we have $\Lambda (F_h)\,\in\, \Omega^{0}(E_{K}(\kK))$, 
since $F_{h}\,\in\,\Omega^{1,1}(E_{K}(\kK))$.

\section{Hitchin--Kobayashi correspondence for real Higgs pairs}\label{sec4n}

The main result of the section, that we prove in the subsequent subsections, is the following.

\begin{The}[{Hitchin--Kobayashi correspondence}]\label{th:Hitchin--Kobayashi}
A $\ssigma$-real $L$-twisted Higgs pair 
$(E,\,\varphi, \,\sigma_E)$ is $\alpha$-polystable if and only
if $(E,\,\varphi, \,\sigma_E)$ admits a $\sigma_{E}$-compatible Hermite--Einstein--Higgs reduction.
\end{The}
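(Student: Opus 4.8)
The plan is to prove the Hitchin--Kobayashi correspondence in two directions, reducing wherever possible to the already-established correspondence for the underlying (non-real) $L$-twisted Higgs pairs from \cite{GGM:21}, while carefully tracking compatibility with the real structure $\sigma_E$ throughout. The easy direction is to show that the existence of a $\sigma_E$-compatible Hermite--Einstein--Higgs reduction $h$ implies $\alpha$-polystability. Here I would first invoke \cite{GGM:21} to conclude that the underlying pair $(E,\varphi)$ is $\alpha$-polystable as an ordinary $L$-twisted Higgs pair, since equation \eqref{eq:Einstein} is exactly the (non-real) Hermite--Einstein--Higgs equation. The point is then to upgrade this to polystability in the real sense: for every reduction $\sigma$ to $P_s$ satisfying the extra condition \eqref{eq:stabilitycondition} with $\varphi\,\in\, H^0(X,\,E_{P_s}(\VV_s)\otimes L)$ and with $\deg E(s,\sigma)-B(s,\alpha)=0$, I must produce a $\sigma_E$-invariant further reduction $E_{L_s}\subset E_{P_s}$ to the Levi with $\varphi$ landing in $E_{L_s}(\VV_s^0)\otimes L$. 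The ordinary correspondence already produces such an $E_{L_s}$; the new content is that the solution metric $h$ being $\sigma_E$-compatible forces the canonical Levi reduction (the one determined by $h$, as in the standard proof) to be preserved by $\sigma_E$. I expect this to follow from uniqueness: the Levi reduction arising from an HE--Higgs metric is characterised intrinsically (as the limit/splitting determined by $h$ and $s$), and since both $h$ and the data $(s,\sigma)$ are $\sigma_E$-compatible, $\sigma_E$ must send this reduction to itself.

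The substantial direction is the converse: $\alpha$-polystability implies the existence of a $\sigma_E$-compatible solution. Following the structure outlined in the introduction, I would first reduce to the stable case by means of a Jordan--H\"older argument adapted to the real setting: a polystable-but-not-stable real pair admits a $\sigma_E$-compatible Jordan--H\"older reduction to a Levi $L_s$, on which the induced pair is stable (as a real pair for the group $L_s$ with its inherited conjugation), so that solving on the Levi and inducing up yields a $\sigma_E$-compatible solution on $E$. This mirrors \cite{GGM:21} but requires checking at each stage that the subgroups, reductions, and induced representations inherit compatible real structures from $\ssigma$, using in particular that $K$ can be chosen $\sigma_G$-invariant (Proposition \ref{max}) and that $c\,\in\, Z_2^{\sigma_G}\cap\Ker(\rho)$ survives the reduction.

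Having reduced to the stable case, the heart of the argument is an analytic existence result, for which I would adapt the minimisation scheme of \cite{BGMR:23}: one works on the space of $\sigma_E$-compatible metrics (equivalently, $\sigma_{E_K}$-compatible connections, via the Chern correspondence of Proposition \ref{pr:dolbeautyconexiones}), considers the integral of the moment map associated to \eqref{eq:Einstein}, and extracts a minimising sequence converging weakly to a solution. The crucial point distinguishing the real case is that the entire functional-analytic setup must be carried out equivariantly: the relevant Sobolev completions of the gauge group, the metric space of reductions, and the Donaldson-type functional all carry a $\Z/2\Z$-action induced by $\sigma_E$, and I must verify that this action is by isometries so that one may restrict the minimisation to the fixed-point set of $\sigma_E$-compatible metrics. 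The weak limit then automatically lies in this fixed set, giving a $\sigma_E$-compatible solution. I expect the main obstacle to be precisely this equivariance of the analytic machinery, together with the characterisation of the space of infinitesimal automorphisms of a stable $\ssigma$-real pair. Because the stability condition \eqref{eq:stabilitycondition} involves the adjoint bundle $\Ad(E)$ rather than $E$ itself, the relevant automorphism space is governed by the $\sigma_E$-invariant (or $(+1)$-eigenspace) part of $H^0(X,\,\Ad(E))$ twisted appropriately, and one must show this reduces to the center, so that no kernel obstructs the convergence and the resulting solution is essentially unique up to the residual symmetry --- this is the technical linchpin on which both the reduction to the stable case and the analytic existence rest.
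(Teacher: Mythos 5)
Your proposal follows essentially the same route as the paper: the hard direction is handled by reducing to the stable case via a $\sigma_E$-compatible Jordan--H\"older reduction (Theorem \ref{th:JHreduction}) and then running the minimisation of the integral of the moment map equivariantly --- the paper symmetrises a minimising sequence by averaging with $\sigma_{\GGG}$ and rests on the characterisation $\aut(E,\varphi,\sigma_E)=H^0(X,E(\mathfrak z))^{\sigma_{\ad(E)}}$ of Proposition \ref{pro:aut}, exactly the linchpin you identify. The converse also matches: semistability and the $\sigma_E$-compatible Levi reduction are extracted from the (non)vanishing of the maximal weight of $\psi_{h,\sigma,s}$ together with its $\sigma_E$-transform, which is the precise form of your ``the canonical Levi reduction determined by $h$ is preserved by $\sigma_E$'' argument.
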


\begin{Cor}\label{Cor:realpairspairs}
A $\ssigma$-real $L$-twisted Higgs pair $(E,\,\varphi,\, \sigma_{E})$ is $\alpha$-polystable
if and only if the underlying Higgs pair $(E,\,\varphi)$ is $\alpha$-polystable.
\end{Cor}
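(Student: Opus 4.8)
The plan is to deduce the corollary from the two Hitchin--Kobayashi correspondences now at our disposal: Theorem \ref{th:Hitchin--Kobayashi} for the $\ssigma$-real pair $(E,\varphi,\sigma_E)$ and its non-real version from \cite{GGM:21} for the underlying pair $(E,\varphi)$. Each of these translates a polystability notion into the existence of a solution of the Hermite--Einstein--Higgs equation \eqref{eq:Einstein}, so the corollary is equivalent to the assertion that $(E,\varphi)$ admits a reduction $h$ solving \eqref{eq:Einstein} if and only if $(E,\varphi,\sigma_E)$ admits a $\sigma_E$-compatible such reduction.

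One implication is immediate. If $(E,\varphi,\sigma_E)$ is real $\alpha$-polystable, then Theorem \ref{th:Hitchin--Kobayashi} yields a $\sigma_E$-compatible Hermite--Einstein--Higgs reduction; forgetting its compatibility, it is a Hermite--Einstein--Higgs reduction of $(E,\varphi)$, and the correspondence of \cite{GGM:21} then gives $\alpha$-polystability of $(E,\varphi)$.

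For the converse I would start from a Hermite--Einstein--Higgs reduction $E_K\subset E$ of $(E,\varphi)$, supplied by \cite{GGM:21}, and symmetrize it. The key point is that $\sigma_E(E_K)$ is again a solution of \eqref{eq:Einstein}: since $\sigma_G(K)=K$ it is a reduction to $K$, and applying $d\sigma_E$, $d\sigma_G$ and $\sigma_X^{*}$ to \eqref{eq:Einstein} reproduces the same equation. This uses that $\sigma_X^{*}\omega=-\omega$, so that $\Lambda F_h$ is preserved, that $\alpha$ satisfies $d\sigma_G(\alpha)=-\alpha$, that $\varphi$ satisfies the reality condition \eqref{eq:realhiggs}, and that $\widehat{\varphi}$, and hence $\mu_h(\varphi)$, transforms correctly because the Hermitian data $h_\VV$ and $h_L$ are $\sigma$-compatible. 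Hence $E_K\mapsto \sigma_E(E_K)$ is an involution of the nonempty set of solutions of \eqref{eq:Einstein}, its square being right translation by $c\,\in\, Z_{2}^{\sigma_G}\cap\Ker(\rho)$, which preserves $E_K$ and acts trivially.

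Finally I would invoke the uniqueness of Hermite--Einstein--Higgs reductions for a polystable pair, according to which the solutions of \eqref{eq:Einstein} form a single orbit of the automorphism group of $(E,\varphi)$, and then produce a fixed point of the above involution inside this orbit; such a fixed reduction is exactly a $\sigma_E$-compatible solution, whence $(E,\varphi,\sigma_E)$ is real $\alpha$-polystable by Theorem \ref{th:Hitchin--Kobayashi}. The main obstacle is this last step in the genuinely polystable (non-stable, non-simple) case: uniqueness holds only modulo automorphisms, and one must show that the involution induced by $\sigma_E$ has a fixed point in a single orbit. In the stable case this reduces to a rescaling argument exploiting $\sigma_E^{2}=c$, whereas in general it requires controlling the action on the orbit, e.g. through the Jordan--Hölder reduction of the polystable pair used elsewhere in the paper.
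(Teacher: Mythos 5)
Your first implication (real polystable $\Rightarrow$ underlying pair polystable) is exactly the paper's argument: extract a $\sigma_E$-compatible Hermite--Einstein--Higgs reduction from Theorem \ref{th:Hitchin--Kobayashi}, forget the compatibility, and invoke the non-real Hitchin--Kobayashi correspondence of \cite{GGM:21}. That part is fine.

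The converse is where there is a genuine gap, and also where you have taken a much harder road than necessary. The paper disposes of this direction essentially by definition (Remark \ref{rem:weaker}): the real (semi/poly)stability conditions quantify only over those parabolic reductions satisfying $\sigma_{\Ad(E)}(\Ad(E_{P_s}))=\Ad(E_{P_s})$, so they are \emph{weaker} than the corresponding non-real conditions, and no metric needs to be constructed at all. You instead try to manufacture a $\sigma_E$-compatible solution of \eqref{eq:Einstein} by symmetrizing an arbitrary solution. The preliminary steps are sound: $\sigma_E(E_K)$ is again a $K$-reduction (since $\sigma_G(K)=K$), it again solves \eqref{eq:Einstein} (the signs from $\sigma_X^*\omega=-\omega$ and $d\sigma_G(\alpha)=-\alpha$ do cancel), and $c$ being a central element of finite order lies in $K$, so $E_K\mapsto\sigma_E(E_K)$ is indeed an involution of the solution set. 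But the crucial step --- producing a fixed point of this involution inside a single $\Aut(E,\varphi)$-orbit --- is precisely what you leave unproven, and in the strictly polystable case it is not a formality: $\Aut(E,\varphi)$ is then a positive-dimensional reductive group, the orbit of solutions is a symmetric space of the form $\Aut(E,\varphi)/\Aut(E_K,\varphi)$, and one would need something like the Cartan fixed point theorem for an isometric involution of a Hadamard manifold (together with a careful treatment of the twist by $c$) to conclude. None of this is supplied, so the proof is incomplete exactly where it matters. Note also that, had you completed it, you would essentially have re-proved the existence half of Theorem \ref{th:Hitchin--Kobayashi} (which the paper obtains via the Jordan--H\"older reduction and a minimizing-sequence argument), rather than used it; the efficient route is to observe that ordinary $\alpha$-polystability of $(E,\varphi)$ already verifies all the (fewer) conditions demanded of the triple $(E,\varphi,\sigma_E)$, and only then, if one wants a compatible metric, to apply Theorem \ref{th:Hitchin--Kobayashi} in the forward direction.
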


\begin{proof}[{Proof of Corollary \ref{Cor:realpairspairs}}]
If a $L$-twisted Higgs pair $(E,\,\varphi)$ is $\alpha$-polystable then $(E,\,\varphi,\,\sigma_{E})$
is $\alpha$-polystable for any $\ssigma$-real structure $\sigma_{E}$ (see Remark \ref{rem:weaker}). 

For the opposite direction, in view of Theorem \ref{th:Hitchin--Kobayashi}, it follows that the
$\alpha$-polystability of a $\ssigma$-real $L$-twisted Higgs pair $(E,\varphi,\sigma_{E})$ is 
equivalent to the existence of a $\sigma_{E}$-compatible solution of the Hermite--Einstein--Higgs 
equation (\ref{eq:Einstein}). The existence of a solution of the Hermite--Einstein--Higgs equation
implies the polystability of the underlying $L$-twisted Higgs pair, by the Hitchin--Kobayashi correspondence 
proved in \cite[Theorem 2.24]{GGM:21}.
\end{proof}

\subsection{Jordan--H\"older reduction}\label{Subsection3.5}

Let $G$ be a connected reductive complex affine algebraic group group, $\VV$ a complex vector 
space and
\begin{equation}\label{hrho}
\rho\,:\,G\,\longrightarrow\, \GL(\VV)
\end{equation}
a holomorphic representation. Let $G'\,\subset\,
G$ be a complex algebraic subgroup. Let $\VV'$ be a complex
linear subspace of $\VV$ such that the action of $G'$ on $\VV$,
obtained by restricting the action of $G$ on $\VV$, preserves this
subspace $\VV'$. Let $$\rho'\,:\,G'\,\longrightarrow\, \GL(\VV')$$
be the restriction of $\rho$.

Take any holomorphic reduction of structure group $E'\, \subset\, E$ to the subgroup $G'$.
Note that the two holomorphic vector bundles $E({\mathbb V})$ and $E'({\mathbb V})$ are
canonically identified. Therefore, $E'({\mathbb V}')$ is a holomorphic subbundle of
$E({\mathbb V})$.

Let $(E,\,\varphi)$ be a $L$-twisted Higgs pair of type $\rho$. A \textbf{reduction of structure
group of $(E,\,\varphi)$, from $(G,\,\rho)$ to $(G',\,\rho')$} is a 
$L$-twisted Higgs pair $(E',\,\varphi')$ of type $\rho'$, where
\begin{itemize}
\item $E'$ is a reduction
of structure group of $E$, from $G$ to $G'$, and

\item $\varphi'\,\in\, H^{0}(X,\, E'(\VV')\otimes L)$ is sent to $\varphi$
by the homomorphism 
$$
H^{0}(X,\, E'(\VV')\otimes L)\,\longrightarrow\,
H^{0}(X,\, E(\VV)\otimes L)
$$ 
induced by the above mentioned vector bundle
injection $\xymatrix{E'(\VV')\ar@{^{(}->}[r] & E(\VV)}$.
\end{itemize}

Let $\sigma_{E}$ be a $(\sigma_{X},\sigma_{G},c,\sigma_{L},\sigma_{\VV},\pm)$-real structure 
on a $L$-twisted Higgs pair $(E,\,\varphi)$. A reduction of structure group $(E',\,\varphi')$ of 
$(E,\,\varphi)$, from $(G,\,\rho)$ to $(G',\,\rho')$, is \textbf{$\sigma_{E}$-compatible} if 
the restriction $\sigma_{E}|_{E'}$ is a $(\sigma_{X},\sigma_{G}|_{G'},c,\sigma_{L}, 
\sigma_{\VV}|_{\VV'},\pm)$-real structure on $(E',\,\varphi')$.

The main result of this subsection is the following:

\begin{The}[Existence of a Jordan--H\"older reduction]\label{th:JHreduction}
Given a $\alpha$-polystable $\ssigma$-real $L$-twisted Higgs pair $(E,\,\varphi,\,
\sigma_{E})$ that is not $\alpha$-stable, 
there is a $\sigma_{E}$-compatible reduction of structure $(E',\,\varphi')$ of $(E,\,\varphi)$ from
$(G,\,\rho)$ to $(G',\,\rho')$, such that $G'\, \subset\, G$ is a reductive complex algebraic
subgroup, and $(E',\,\varphi',\,\sigma_{E}|_{E'})$ is $\alpha$-stable.
\end{The}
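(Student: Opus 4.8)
The plan is to prove the existence of a Jordan--H\"older reduction by combining the known non-real Jordan--H\"older theory from \cite{GGM:21} with an averaging/descent argument that forces the reduction to be $\sigma_E$-compatible. The starting point is that a polystable $\ssigma$-real pair $(E,\,\varphi,\,\sigma_E)$ which fails to be $\alpha$-stable must violate the stability inequality for some admissible datum. Concretely, by the definition of $\alpha$-stability there exists $s\,\in\,\sqrt{-1}\kK\setminus\Ker(d\rho)$ and a holomorphic reduction $\sigma$ of $E$ to $P_s$ satisfying both \eqref{eq:stabilitycondition} and $\varphi\,\in\, H^0(X,\,E_{P_s}(\VV_s)\otimes L)$, for which $\deg E(s,\sigma)-B(s,\alpha)\,=\,0$. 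Invoking the polystability clause, this reduction further reduces to the Levi $L_s\,\subset\, P_s$: there is $E_{L_s}\,\subset\, E_{P_s}$ with $\sigma_E(E_{L_s})\,=\,E_{L_s}$ and $\varphi\,\in\, H^0(X,\,E_{L_s}(\VV_s^0)\otimes L)$. Setting $G'\,=\,L_s$, $\VV'\,=\,\VV_s^0$, and $\rho'\,=\,\rho|_{L_s}$, we obtain a candidate reduction $(E',\,\varphi')\,:=\,(E_{L_s},\,\varphi)$ of type $\rho'$, and $L_s$ is reductive because it is a Levi subgroup.

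Next I would verify the two substantive properties: that the reduction is $\sigma_E$-compatible and that $(E',\,\varphi',\,\sigma_E|_{E'})$ is again polystable, allowing an inductive descent. Compatibility of $E'$ is already guaranteed by $\sigma_E(E_{L_s})\,=\,E_{L_s}$; what remains is to check that $\sigma_E|_{E'}$ is a genuine $(\sigma_X,\,\sigma_G|_{G'},\,c,\,\sigma_L,\,\sigma_\VV|_{\VV'},\,\pm)$-real structure in the sense of Section \ref{Subsection3.5}. For this I must confirm that $\sigma_G$ preserves $G'=L_s$ and that $\sigma_\VV$ preserves $\VV'=\VV_s^0$. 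The natural route is to observe that condition \eqref{eq:stabilitycondition}, namely $\sigma_{\Ad(E)}(\Ad(E_{P_s}))\,=\,\Ad(E_{P_s})$, translates into a statement about the element $s$: up to the action already encoded in $\sigma_E$, the involution $\sigma_G$ carries $P_s$ to a conjugate parabolic and carries $s$ into $-d\sigma_G(s)$ up to $\Ker(d\rho)$; since $d\sigma_G(\alpha)=-\alpha$ by hypothesis in Section \ref{Subsection3.2}, the paired object $-d\sigma_G(s)$ defines the \emph{same} parabolic and Levi structure, so $\sigma_G(L_s)=L_s$ and $\sigma_\VV(\VV_s^0)=\VV_s^0$. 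The field equation $(\sigma_V\otimes\sigma_L)(\varphi)\,=\,\pm\sigma_X^*\varphi$ then restricts verbatim to $E'$, so $\varphi'$ satisfies \eqref{eq:realhiggs} for the restricted data.

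For the polystability of the restricted triple and the termination of the induction, I would argue that $(E_{L_s},\,\varphi,\,\sigma_E|_{E_{L_s}})$ is $\alpha$-polystable as a $\ssigma$-real $L$-twisted Higgs pair of type $\rho'$: any destabilizing $s'\,\in\,\sqrt{-1}\kK'$ for the reduced pair, satisfying the analogue of \eqref{eq:stabilitycondition}, extends to a destabilizing datum for the original pair (this is the standard compatibility of the degree function under Levi reduction, using that $\chi_{s'}$ and $\chi_s$ agree on the relevant center), so polystability is inherited. Since passing from $G$ to the proper reductive subgroup $L_s$ strictly reduces the dimension, iterating the construction must terminate at a stage where the reduced pair is $\alpha$-stable, and composing the successive reductions yields the desired $(G',\,\rho')$ with $G'\,\subset\, G$ reductive and $(E',\,\varphi',\,\sigma_E|_{E'})$ stable.

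The main obstacle I anticipate is the compatibility bookkeeping in the second paragraph: one must show that the Levi $L_s$ produced by the polystability clause can be chosen so that $\sigma_G(L_s)=L_s$ \emph{and} $\sigma_\VV(\VV_s^0)=\VV_s^0$ simultaneously, rather than merely that $\sigma_E$ preserves the total space $E_{L_s}$. The subtlety is that \eqref{eq:stabilitycondition} is a condition on $\Ad(E)$ and hence controls $\sigma_G$-invariance only up to inner automorphisms coming from the bundle, so a careful translation between the bundle-level involution $\sigma_{\Ad(E)}$ and the group-level involution $\sigma_G$ is needed to pin down the relationship between $s$ and $d\sigma_G(s)$. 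I expect this to require the Cartan-type commutation relations recorded in Proposition \ref{max} together with the hypothesis $d\sigma_G(\alpha)=-\alpha$, and it is precisely where the real structure hypotheses on $B$, $\sigma_\VV$, and $\alpha$ from Section \ref{Subsection3.2} are consumed.
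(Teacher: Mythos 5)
Your overall strategy --- extract the Levi reduction $E_{L_s}$ from the polystability clause, restrict the structure group to $L_s$ and the representation to $\VV_s^0$, check that the real structure survives, and iterate --- is the right shape, and you correctly flag the dangerous step. But the step you flag is a genuine gap, and your proposed resolution does not close it. From $\sigma_E(E_{L_s})\,=\,E_{L_s}$ and the equivariance $\sigma_E(eg)\,=\,\sigma_E(e)\sigma_G(g)$ one does get $\sigma_G(L_s)\,=\,L_s$ for free, i.e.\ $d\sigma_G(s)$ and $s$ have the \emph{same centralizer}; but this does not pin down $s$ itself, and the zero-weight space $\VV_s^0\,=\,\ker d\rho(s)$ is \emph{not} determined by the centralizer $L_s$ alone. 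Already for $G\,=\,(\CC^*)^2$ acting diagonally on $\VV\,=\,\CC^2$ with $\sigma_G$ the swap-and-conjugate involution and $s\,=\,(1,0)$, one has $L_s\,=\,G\,=\,L_{d\sigma_G(s)}$ while $\sigma_\VV(\VV_s^0)\,=\,\VV_{d\sigma_G(s)}^0\,\neq\,\VV_s^0$. So the triple $(E_{L_s},\,\varphi,\,\sigma_E|_{E_{L_s}})$ need not be a real pair of type $\rho|_{L_s}\,:\,L_s\,\to\,\GL(\VV_s^0)$ at all, because $\sigma_\VV$ need not preserve $\VV_s^0$. The appeal to $d\sigma_G(\alpha)\,=\,-\alpha$ and to Proposition \ref{max} does not supply the missing identity $d\sigma_G(s)\,=\,s$ (nor $-s$): condition \eqref{eq:stabilitycondition} is deliberately weaker than group-level $\sigma_G$-invariance, as the paper's own remark about the absence of $\sigma_G$-invariant parabolics emphasizes.

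The paper's proof avoids this by not taking $G'$ to be the Levi of the destabilizing parabolic. Instead it first proves Proposition \ref{pro:aut}: a polystable pair fails to be stable exactly when the fixed-point algebra $\aut(E,\varphi,\sigma_E)\,=\,\aut(E,\varphi)^{\sigma_{\aut}}$ contains a non-central semisimple section $\eta$. Because $\eta$ is fixed by $\sigma_{\aut}$, its fiberwise conjugacy class is represented by an element $u\,\in\,[\gG,\gG]$ with $d\sigma_G(u)\,=\,u$, and the generator $a$ of the associated torus also satisfies $d\sigma_G(a)\,=\,a$ \emph{on the nose}. The reduction is then to $G_1\,=\,Z_K(a)^{\CC}$ with $\VV_1\,=\,\ker\rho(a)$, both of which are manifestly preserved by $\sigma_G$ and $\sigma_\VV$ precisely because $a$ itself is $d\sigma_G$-fixed; Proposition \ref{prop:JHpoly} then establishes polystability of the reduced pair (the step you describe as ``standard compatibility of the degree function'', which in the real setting also requires the equivariant splitting Lemma \ref{Pro:equivariant}). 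If you want to salvage your route, you would need to replace your $s$ by a genuinely $d\sigma_G$-fixed semisimple datum, which is exactly what the passage through $\aut(E,\varphi,\sigma_E)$ accomplishes.
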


Theorem \ref{th:JHreduction} will be proved after proving Proposition \ref{prop:JHpoly}.

To prove Theorem \ref{th:JHreduction} we shall follow the approach in \cite[Sections 2.9, 2.10, 2.11]{GGM:21}, checking 
that every construction done there is compatible with real structures. First, we establish a relation between 
polystable $\ssigma$-real $L$-twisted Higgs pairs, that are not stable, and a certain space of automorphisms. This
will be elaborated below.

Let $$\ad(E)\,=\, E(\gG)\, =\, E\times_G \gG\, \longrightarrow\, X$$ be the holomorphic vector bundle associated to $E$ 
for the adjoint action of $G$ on its Lie algebra $\gG$; so $\ad(E)$ is the adjoint vector bundle for $E$. Note that the 
fibers of $\ad(E)$ are Lie algebras isomorphic to $\gG$. More precisely, any fiber $E(\gG)_x$ is identified with $\gG$ 
uniquely up to an inner automorphism of $\gG$. Since the fibers of $\ad(E)$ are Lie algebras, the vector space 
$H^{0}(X,\, \ad(E))$ has the structure of a complex Lie algebra.

The involution $\sigma_{E}$ of $E$ and the involution $d\sigma_{G}$ of $\gG$ together define an
anti-holomorphic involution 
\begin{equation}\label{xe1}
\sigma_{\ad(E)}\, :\, \ad(E)\, \longrightarrow\, \ad(E)\, .
\end{equation}

The space of infinitesimal automorphisms of a principal $G$-bundle $E_G$ is given by
$H^{0}(X,\, \ad(E))$. In other words, $H^{0}(X,\, \ad(E))$ is the Lie algebra of the group
of all holomorphic automorphisms of $E$.
Hence the set of infinitesimal automorphism of a $L$-twisted Higgs pair 
$(E,\,\varphi)$ is given by
$$\aut(E,\,\varphi)\,:=\,\{s\,\in\, H^{0}(X,\, \ad(E))\,\mid\,
((d\rho)\otimes \text{Id}_L)(s)(\varphi)\,=\,0\}\, \subset\,
H^{0}(X,\, \ad(E))\,,
$$
where $d\rho$ is the homomorphism of Lie algebras associated to $\rho$ in \eqref{hrho}; note that
$d\rho$ induces a homomorphism of associated bundles. 
It is straight-forward to check that $\aut(E,\,\varphi)$ is a complex Lie subalgebra
of $H^{0}(X,\, \ad(E))$.

Let
\begin{equation}\label{sa}
\sigma_{\aut}\, :\, \aut(E,\,\varphi)\, \longrightarrow\, \aut(E,\,\varphi)
\end{equation}
be the conjugate linear involution defined by
$$\sigma_{\aut}(s)(\sigma_{X}(x))\,:=\, \sigma_{\ad(E)}(s(x))$$ for every $s\,\in\,
\aut(E,\,\varphi)$ and $x\,\in\, X$,
where $\sigma_{\ad(E)}$ is the anti-holomorphic involution of the adjoint bundle $\ad(E)$.

Note that given a section $s\,\in\, H^0(X,\, \ad(E))$, there is a unique Jordan decomposition
\begin{equation}\label{sd}
s\, =\, s^{ss} + s^{n}\, ,
\end{equation}
where
\begin{itemize}
\item $s^{ss}(x)\, \in\, \ad(E)_x$ is semisimple for all $x\, \in\, X$ while
$s^{n}(x)$ is nilpotent, and

\item $s^{ss}(x)$ and $s^{n}(x)$ commute for all $x\, \in\, X$.
\end{itemize}
(See \cite{BBN1}, \cite{BBN2}).
Let
$$
\aut^{ss}(E,\,\varphi)\,\subset\, \aut(E,\,\varphi)
$$
be the subspace consisting of all $s_1\,\in\, 
\aut(E,\,\varphi)$ such that $s_1(x)\, \in\, \ad(E)_x$ is semisimple for all
$x\, \in\, X$. Therefore, we have
$$s^{ss}\, \in\, \aut^{ss}(E,\,\varphi)\, ,$$
where $s^{ss}$ is the section in \eqref{sd}. In fact, if
$s\, \in\, \aut(E,\,\varphi)$ with $\sigma_{\aut}(s)\,=\, s$, then we have
$s^{ss}\, \in\, \aut^{ss}(E,\,\varphi)$ with $\sigma_{\aut}(s^{ss})\,=\, s^{ss}$.

Consider the Lie algebra $\aut(E,\,\varphi)$. Let
\begin{equation}\label{1c}
{\mathcal C}\, \subset\, \aut(E,\,\varphi)
\end{equation}
be a Cartan subalgebra such that
$$
\sigma_{\aut}({\mathcal C})\, =\, {\mathcal C}\, ,
$$
where $\sigma_{\aut}$ is constructed in \eqref{sa}. Note that
we have ${\mathcal C}\, \subset\, \aut^{ss}(E,\,\varphi)$.

This complex subalgebra $\mathcal C$ in \eqref{1c} produces a Levi subgroup $L(P)\, 
\subset\, P$ of a parabolic subgroup $P\, \subset\, G$, together with a holomorphic reduction 
of structure group of $E$ $$ E_{L(P)}\, \subset\, E $$ to the Levi subgroup $L(P)$ \cite[p.~55, 
Proposition 1.2]{BP} (see also \cite{BBN1}, \cite{BBN2}). We briefly recall below the 
construction of the above pair $(L(P),\, E_{L(P)})$.

Take any element $s\, \in\, \mathcal C$ such that the complex
subgroup of $\text{Aut}(E)$ generated by $\{\exp(ts)\}_{t\in \mathbb C}$
coincides with the subgroup generated by $\mathcal C$; elements of $\mathcal C$ satisfying this condition form a
nonempty Zariski open subset of $\mathcal C$.
Since the conjugacy classes of semisimple elements in $\mathfrak g$ is an affine variety, and $X$ is
compact, the condition that $s(x)\, \in\, \ad(E)_x$ is
semisimple for all $x\, \in\, X$ implies that the conjugacy class in $\mathfrak g$ determined by the element
$s(x)\, \in\, \text{ad}(E)_x$ is independent of $x$. Fix an element $s_0\, \in\, \mathfrak g$ in this conjugacy
class. Then $L(P)\, \subset\, $ is the centralizer, in $G$, of $s_0$ for the adjoint action of $G$.
Let
\begin{equation}\label{cs}
C(s)\, \subset\, \text{Ad}(E)
\end{equation}
be the sub-group scheme whose fiber over any $x\, \in\, X$ is the
centralizer, in $\text{Ad}(E)_x$, of $s(x)$ for the adjoint action on the Lie algebra; since the conjugacy
class of $s(x)$ is independent of $x\, \in\, X$, it follows that $C(s)$ is indeed a sub-group scheme of
$\text{Ad}(E)$. Also, recall that $\text{Ad}(E)$ is a quotient of $E\times G$, where two points
$(z_1,\, g_1), \, (z_2,\, g_2)\, \in\, E\times G$ are identified if there is $g\, \in\, G$ such that
$z_2\,=\, z_1g$ and $g_2\,=\, g^{-1}g_1g$. The complex submanifold
\begin{equation}\label{cs2}
E_{L(P)}\, \subset\, E
\end{equation}
is the locus of all point $z\, \in\, E$ such that the image of $(z,\, g)\, \in\, E\times L(P)$ in the quotient
space $\text{Ad}(E)$ lies in $C(s)$.

Now choose $s\, \in\, \mathcal C$ as above satisfying the extra condition that 
$\sigma_{\aut}(s)\,=\, s$ (as before, impose the condition that the subgroup of $\text{Aut}(E)$ generated by 
$\{\exp(ts)\}_{t\in \mathbb C}$ coincides with the subgroup generated by $\mathcal C$); since 
the subspace ${\mathcal C}^{\sigma_{\aut}}\, \subset\, \mathcal C$ fixed by $\sigma_{\aut}$ is 
Zariski dense in $\mathcal C$, such an element $s$ exists. Choose the above element $s_0\, \in\, 
\mathfrak g$ such that $d\sigma_G(s_0)\,=\, s_0$. This condition implies that 
$\sigma_G(L(P))\,=\, L(P)$ (the parabolic subgroup $P$ associated to $L(P)$ is not unique, and 
$P$ need not be preserved by $\sigma_G$). The antiholomorphic involution $\sigma_E$ of $E$ and 
the antiholomorphic involution $\sigma_G$ of $G$ together produce an antiholomorphic involution 
of $E\times G$. This involution descends to an antiholomorphic involution of $\text{Ad}(E)$. 
Since $\sigma_{\aut}(s)\,=\, s$, the sub-group scheme $C(s)$ in \eqref{cs} is preserved by this 
antiholomorphic involution of $\text{Ad}(E)$. Consequently, $E_{L(P)}$ in \eqref{cs2} is 
preserved by the antiholomorphic involution $\sigma_E$ of $E$.

We note that the condition that $(d\rho\otimes \text{Id}_L)(s)(\varphi)\,=\,0,$ for all $s\, \in\, \mathcal C,$
implies that $(E_{L(P)},\, \varphi)$ is, in fact, a reduction of structure group of $(E,\, \varphi)$.

Let
$$
\zZ\, \subset\, {\mathfrak g}
$$
be the center of $\mathfrak g$, so $\zZ$ is the Lie algebra of $Z$. Let
$$
E(\zZ)\ :=\, E\times^G \zZ
$$
be the vector bundle associated to $E$ for the adjoint action of $G$ on $\zZ$. Since the
adjoint action of $G$ on $\zZ$ is trivial, we have $E(\zZ)\ :=\, X\times\zZ$. Also, note that
$$
E(\zZ)\ \subset\,\text{ad}(E)
$$
is the fiberwise center of the Lie algebra bundle $\text{ad}(E)$.

While the infinitesimal automorphisms of $(E,\,\varphi)$ are parameterized by
$\aut(E,\,\varphi)$, the infinitesimal automorphisms of the triple $(E,\, \varphi,\, \sigma_E)$
constitute the subspace
$$
\aut(E,\, \varphi, \,\sigma_E)\, =\, \aut(E,\,\varphi)^{\sigma_{\aut}}\,:=
\{s \, \in\, \aut(E,\,\varphi)\, \mid\, \sigma_{\aut}(s)\,=\, s\}\, ,
$$
where $\sigma_{\aut}$ is the real involution of $\aut(E,\,\varphi)$ in \eqref{sa}. Therefore, we have
$$H^{0}(X,\,E(\zZ))^{\sigma_{\ad(E)}}\,=\, H^{0}(X,\,E(\zZ))\cap \aut(E,\, \varphi,\, \sigma_E)\, ;$$
note that the involution $\sigma_{\ad(E)}$ in \eqref{xe1} induces an involution of 
$H^0(X,\, \ad(E))$ (also denoted by $\sigma_{\ad(E)}$), and the subspace
$H^{0}(X,\,E(\zZ))\, \subset\, H^0(X,\, \ad(E))$ is preserved by $\sigma_{\ad(E)}$.

\begin{Pro}\label{pro:aut}
Let $(E,\,\varphi,\, \sigma_E)$ be a $\ssigma$-real $L$-twisted Higgs
pair. If $(E,\, \varphi,\, \sigma_E)$ is $\alpha$-stable then 
\begin{equation}\label{hypo2}
\aut(E,\,\varphi,\,\sigma_{E})\,=\,H^{0}(X,\, E(\mathfrak{z}))^{\sigma_{\ad(E)}} \,.
\end{equation}

Assume that $(E,\, \varphi,\, \sigma_E)$ is $\alpha$-polystable. Then
$(E,\varphi, \sigma_E)$ is $\alpha$-stable if and only if 
\begin{equation}\label{hypo1}
\aut^{ss}(E,\,\varphi,\,\sigma_{E})\,=\,H^{0}(X,\, E(\mathfrak{z}))^{\sigma_{\ad(E)}}\, .
\end{equation}
\end{Pro}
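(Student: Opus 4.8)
The plan is to adapt the proof of the corresponding non-real statement in \cite[Section~2.9]{GGM:21}, checking at each step that the reductions produced are compatible with the real structures, so that the weaker $\alpha$-stability condition for $\ssigma$-real pairs---which tests only reductions satisfying \eqref{eq:stabilitycondition}---still suffices to run the argument. Throughout I use that $\sigma_{\aut}$ preserves the Jordan decomposition \eqref{sd}, so that for $s\in\aut(E,\varphi,\sigma_E)$ both $s^{ss}$ and $s^{n}$ lie in $\aut(E,\varphi)$ and are fixed by $\sigma_{\aut}$, and that $\sigma_{\ad(E)}$ preserves $H^{0}(X,E(\mathfrak z))$. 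The inclusion $H^{0}(X,E(\mathfrak z))^{\sigma_{\ad(E)}}\subseteq\aut(E,\varphi,\sigma_E)$ is the easy one: central sections are infinitesimal automorphisms exactly as in the complex case, and $\sigma_{\aut}$ restricts to $\sigma_{\ad(E)}$ on $H^{0}(X,E(\mathfrak z))$, so fixed central sections give fixed automorphisms. The content is therefore the reverse inclusion, namely that every $\sigma_{\aut}$-fixed infinitesimal automorphism of an $\alpha$-stable triple is central.

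To prove \eqref{hypo2}, fix $s\in\aut(E,\varphi,\sigma_E)$ and treat its two Jordan components. First I would show $s^{ss}$ is central. If not, then $s^{ss}$ is everywhere semisimple, non-central and $\sigma_{\aut}$-fixed; choosing a $\sigma_{\aut}$-invariant Cartan subalgebra $\mathcal C\ni s^{ss}$ as in \eqref{1c} and a real generator $s$ of $\mathcal C$ (possible since $\mathcal C^{\sigma_{\aut}}$ is Zariski dense in $\mathcal C$), the construction recalled around \eqref{cs}--\eqref{cs2} yields a holomorphic reduction $E_{L(P)}\subset E$ to the Levi $L_{s}$ of a parabolic $P_{s}$ which, because $s$ is $\sigma_{\aut}$-fixed, is preserved by $\sigma_{E}$; hence $E_{P_s}$ satisfies \eqref{eq:stabilitycondition}, and $(d\rho\otimes\Id_L)(s)(\varphi)=0$ gives $\varphi\in H^{0}(X,E_{P_s}(\mathbb V_{s})\otimes L)$. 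Thus this reduction is admissible for the real $\alpha$-stability test, and the same applies verbatim to $-s$, which defines the opposite parabolic with the same Levi, so that the two associated degrees are negatives of one another. Adding the two strict stability inequalities and using $B(s,\alpha)+B(-s,\alpha)=0$ forces $0>0$, a contradiction; the directions $s\in\sqrt{-1}\kK\cap\Ker(d\rho)$ excluded from the stability test are disposed of, using $\alpha$-semistability for both $\pm s$, as in \cite{GGM:21}. Hence $s^{ss}\in H^{0}(X,E(\mathfrak z))$.

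It then remains to show $s^{n}=0$. With $s^{ss}$ central, $s^{n}$ is a nilpotent $\sigma_{\aut}$-fixed infinitesimal automorphism; the limit reduction $\lim_{t\to\infty}\exp(ts^{n})$ attached to $s^{n}$ produces a $\sigma_{E}$-invariant reduction to a parabolic satisfying \eqref{eq:stabilitycondition} and compatible with $\varphi$, from which a contradiction with $\alpha$-stability is derived exactly as in \cite{GGM:21}, the reality being automatic from $\sigma_{\aut}(s^{n})=s^{n}$. This nilpotent step is the main obstacle: unlike the semisimple case there is no clean sign coming from a $\pm s$ pairing, so one must run the degree/weight estimate of \cite{GGM:21} and verify that the parabolic it singles out can be chosen $\sigma_{\Ad(E)}$-invariant; the Zariski density of the $\sigma_{\aut}$-fixed loci is what makes this arrangement possible. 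Together with the previous step this gives $s\in H^{0}(X,E(\mathfrak z))^{\sigma_{\ad(E)}}$, proving \eqref{hypo2}.

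Finally, for the polystable statement \eqref{hypo1}, the forward implication is immediate: if $(E,\varphi,\sigma_E)$ is $\alpha$-stable then \eqref{hypo2} together with the inclusions $H^{0}(X,E(\mathfrak z))^{\sigma_{\ad(E)}}\subseteq\aut^{ss}(E,\varphi,\sigma_E)\subseteq\aut(E,\varphi,\sigma_E)$ forces equality in \eqref{hypo1}. For the converse I argue contrapositively: if $(E,\varphi,\sigma_E)$ is $\alpha$-polystable but not $\alpha$-stable, the polystability condition supplies a $\sigma_{E}$-invariant Levi reduction $E_{L_s}\subset E_{P_s}$ as in \eqref{els} with $\deg E(s,\sigma)-B(s,\alpha)=0$; the generator of its center is then a non-central, semisimple, $\sigma_{\aut}$-fixed infinitesimal automorphism, so $\aut^{ss}(E,\varphi,\sigma_E)\supsetneq H^{0}(X,E(\mathfrak z))^{\sigma_{\ad(E)}}$, which is the desired contrapositive and completes the proof of \eqref{hypo1}.
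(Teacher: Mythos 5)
Your proposal follows essentially the same route as the paper's proof: the easy inclusion of fixed central sections, the Jordan decomposition of a $\sigma_{\aut}$-fixed infinitesimal automorphism, a $\sigma_{\aut}$-invariant Cartan subalgebra producing a $\sigma_{E}$-compatible Levi/parabolic reduction that violates $\alpha$-stability in the semisimple case, a real destabilizing parabolic attached to a nonzero nilpotent section, and, for the converse of the polystable statement, the center of the Levi reduction supplied by polystability yielding a non-central semisimple fixed automorphism. The one detail to tighten is the nilpotent step: the parabolic there is not obtained as a limit of $\exp(ts^{n})$ (which does not converge for nilpotent $s^{n}$) but via the normalizer-tower construction of the canonical parabolic subalgebra bundle of \cite{AzB}, whose invariance under $\sigma_{\ad(E)}$ follows from the naturality of that construction applied to $\sigma_{\aut}(s^{n})=s^{n}$ rather than from Zariski density of the fixed locus.
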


\begin{proof}
We will start proving the second part, assuming the first part. The first part will be proved
subsequently.

Assume that $(E,\, \varphi,\, \sigma_E)$ is $\alpha$-polystable. If
$(E,\, \varphi,\, \sigma_E)$ is $\alpha$-stable, then the first part of the proposition implies
that $\aut^{ss}(E,\,\varphi,\,\sigma_{E})\,=\,H^{0}(X,\, E(\mathfrak{z}))^{\sigma_{\ad(E)}}$, since
any section $s\, \in\, H^{0}(X,\, E(\mathfrak{z}))$ is pointwise semisimple. Note that
$H^{0}(X,\, E(\mathfrak{z}))\,=\, \mathfrak{z}$, because $H^{0}(X,\, E(\mathfrak{z}))$,
as noted before, is the trivial
holomorphic vector bundle on $X$ with fiber $\mathfrak{z}$. The ${\sigma_{\ad(E)}}$-invariant part
$H^{0}(X,\, E(\mathfrak{z}))^{\sigma_{\ad(E)}}\, \subset\, H^{0}(X,\, E(\mathfrak{z}))$ coincides
with $\mathfrak{z}^{d\sigma_G}$, where $d\sigma_G$ is, as above, the involution of $\mathfrak g$ induced
by $\sigma_G$.

On the other hand, if $(E,\, \varphi,\, \sigma_E)$ is not $\alpha$-stable, then for any reduction
$$
E_{L_s}\, \subset\, E_{P_s}\, \subset\, E
$$
as in \eqref{els}, consider the subbundle
$$
{\mathcal Z}(E_{L_s})\, \subset\, \text{ad}(E_{L_s})
$$
defined by the centers of the fibers of the adjoint bundle $\text{ad}(E_{L_s})$ of
$E_{L_s}$; so for any $x\, \in\, X$, the fiber ${\mathcal Z}(E_{L_s})_x$ is the center
of the Lie algebra $\text{ad}(E_{L_s})_x$. Note that ${\mathcal Z}(E_{L_s})$ is a trivial
vector bundle on $X$ whose fibers are identified with the center $\zZ_{L_{s}}$ of the Lie algebra of
the Levi subgroup $L_s$. Moreover,
$$
\zZ_{L_{s}}\,=\, H^0(X,\, {\mathcal Z}(E_{L_s}))\, \subset\, H^0(X,\, \text{ad}(E_{L_s}))
\, \subset\, \aut(E,\,\varphi)\, ,
$$
and the automorphism $\sigma_{\aut}$ of $\aut(E,\,\varphi)$ preserves the
above subspaces $H^0(X,\, \text{ad}(E_{L_s}))$ and $H^0(X,\, {\mathcal Z}(E_{L_s}))$; in fact, the
action of $\sigma_{\aut}$ on $H^0(X,\, {\mathcal Z}(E_{L_s}))$ coincides
with the action of $d\sigma_G$ on $\zZ_{L_{s}}$. The vector space $H^0(X,\, {\mathcal Z}(E_{L_s}))$
is strictly larger than $H^{0}(X,\, E(\mathfrak{z}))$, because $P_s$ is a proper parabolic subgroup
of $G$ (recall that $(E,\, \varphi,\, \sigma_E)$ is $\alpha$-polystable but not $\alpha$-stable).
Also, every element $s\, \in\, {\mathcal Z}(E_{L_s})$ is pointwise semisimple.
These together imply that $H^0(X,\, {\mathcal Z}(E_{L_s}))^{\sigma_{\ad(E)}}$ is strictly larger than
$H^{0}(X,\, E(\mathfrak{z}))^{\sigma_{\ad(E)}}$. This proves the second part of the proposition
assuming the first part.

To prove the first part of the proposition, we first note that
$$
H^{0}(X,\, E(\mathfrak{z}))^{\sigma_{\ad(E)}}\, \subset\, \aut(E,\,\varphi,\,\sigma_{E})\, ,
$$
because $H^{0}(X,\, E(\mathfrak{z}))\, \subset\, \aut(E,\,\varphi)$.

Let $(E,\,\varphi,\, \sigma_E)$ be a $\ssigma$-real $L$-twisted Higgs pair
which is $\alpha$-stable.

First assume that 
\begin{equation}\label{a1}
\aut^{ss}(E,\,\varphi,\,\sigma_{E}) \setminus H^{0}(X,\, E(\mathfrak{z}))^{\sigma_{\ad(E)}}\, \not=\,
\emptyset\, .
\end{equation}
Take an element $s\, \in\, \aut^{ss}(E,\,\varphi,\,\sigma_{E})$ that generates a Cartan subalgebra
of $\aut^{ss}(E,\,\varphi)$. As we saw in \eqref{cs2}, this section $s$
produces a reduction of structure group
$$
E({\mathbb L}^s) \, \subset\, E
$$
of $E$ to a Levi subgroup ${\mathbb L}^s\, \subset\, G$ which satisfies the following two conditions:
\begin{enumerate}
\item $E({\mathbb L}^s)$ is a reduction of structure group of $(E,\, \varphi)$, and

\item $\sigma_E(E({\mathbb L}^s))\,=\, E({\mathbb L}^s)$.
\end{enumerate}
{}From the assumption in \eqref{a1} it follows that the conjugacy class in $\mathfrak g$ determined
by $s$ does not lie in $\zZ$. Consequently, ${\mathbb L}^s\, \subsetneq\, G$ is a proper
parabolic subgroup.

The reduction $E({\mathbb L}^s) \, \subset\, E$ thus contradicts the given condition that
$(E,\,\varphi,\, \sigma_E)$ is $\alpha$-stable. In view of this contradiction, we conclude that
$$
\aut^{ss}(E,\,\varphi,\,\sigma_{E})\,=\, H^{0}(X\,, E(\mathfrak{z}))^{\sigma_{\ad(E)}}\, .
$$

To complete the proof we need to show that $\aut(E,\,\varphi,\,\sigma_{E})$ does not have any
nonzero nilpotent element.

Let $s\, \in\, \aut(E,\,\varphi,\,\sigma_{E})$ be a nonzero nilpotent element. 
This defines 
a parabolic subalgebra bundle
\begin{equation}\label{cp}
{\mathcal P}\, \subset\, \text{ad}(E)\, 
\end{equation}
constructed as follows. Since there are only finitely many
conjugacy classes of nilpotent elements of $\mathfrak g$, there is a open subset
$U\, \subset\, X$ such that
\begin{itemize}
\item $X\setminus U$ is a finite subset, and

\item the conjugacy class in $\mathfrak g$ determined by $s(x)\, \in\, \text{ad}(E)_x$
is independent of $x\, \in\, U$.
\end{itemize}

Take any $x\, \in\, U$. Let ${\mathfrak n}_1\, \subset\, \text{ad}(E)_x$ be the normalizer
of ${\mathbb C}\cdot s(x)$, and let ${\mathfrak r}_1$ be the nilpotent radical of ${\mathfrak n}_1$.
Now inductively define ${\mathfrak n}_{i+1}$ to be the normalizer of ${\mathfrak r}_i$ in
$\text{ad}(E)_x$ and define ${\mathfrak r}_{i+1}$ to be the nilpotent radical of
${\mathfrak n}_{i+1}$. Now $\{{\mathfrak n}_j\}_{j\geq 1}$ is an increasing sequence of
subspaces that converges to a parabolic subalgebra of $\text{ad}(E)_x$ (see
\cite[p.~340, Lemma 3.7]{AzB}). This way we obtain a parabolic subalgebra bundle
$${\mathcal P}'\, \subset\, \text{ad}(E)\vert_U$$
over $U$. Since the conjugates of a parabolic subalgebra in a complex reductive algebra
are parameterized by a complete variety (conjugates of a parabolic subalgebra
$\text{Lie}(P)\, \subset\, \mathfrak g$ are parameterized by $G/P$ which is a complete
variety), the above subalgebra bundle ${\mathcal P}'$ extends to a parabolic subalgebra bundle
$\mathcal P$ over $X$ (as in \eqref{cp}); see the proof of Proposition 3.9 in \cite{AzB}.
It is straight-forward to check that this subalgebra bundle $\mathcal P$ contradicts the
given condition that $(E,\,\varphi,\, \sigma_E)$ is $\alpha$-stable. This proves the proposition.
\end{proof}

The following two lemmas correspond to \cite[Lemma 2.16]{GGM:21} and
\cite[Lemma 2.17]{GGM:21} respectively. Note that here we only consider
reductions of structure group that are compatible with the real structure.

\begin{Lem}\label{lem:realreduction}
Let $(E,\,\sigma_{E})$ be a $(\sigma_{X},\sigma_{G},c)$-real $G$-bundle
over $(X,\,\sigma_X).$ Let $G'$ be a Lie subgroup of $G$ such that the Lie algebra
$\gG'\,=\,\Lie(G')$ has the following property: The normalizer $N_{G}(\gG')\, \subset\,
G$ for the adjoint action of $G$ on $\gG$ is $G'$ itself. The reductions
$E_{G'}\, \subset\, E$ of structure group of $E$ from $G$ to $G'$ such that
\begin{equation}\label{condition11}
 \sigma_{\Ad(E)}(\Ad (E_{G'}))\,=\, \Ad (E_{G'})\,,
\end{equation}
are in one-to-one correspondence with subbundles $F\,\subset\, \ad(E)$ of Lie algebras satisfying
the following two conditions:
\begin{enumerate}
\item $\sigma_{\ad(E)}(F)\,=\,F$, and

\item for any $x\,\in\, X$ and any trivialization $E_{x}\,\simeq\, G$
of the fiber $E_x$ (obtaining by fixing an element of $E_{x}$),
the subalgebra $F_{x}\, \subset\, \ad(E)_{x}$ is conjugate to $\gG'$, via the
trivialization $\ad(E)_{x}\,\simeq \,\gG$ induced by the trivialization of $E_x$.
\end{enumerate}
\end{Lem}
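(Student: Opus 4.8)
The plan is to establish a correspondence that parallels the non-real case (where reductions to $G'$ correspond to subbundles of Lie algebras conjugate to $\gG'$), and then to verify that the real structures match up on both sides. The key observation is that the hypothesis $N_G(\gG') = G'$ is precisely what makes the bijection work in the non-real setting, so I would begin by recalling that correspondence and then overlay the two compatibility conditions.

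\textbf{Step 1: The underlying (non-real) bijection.} First I would recall that, without the real structure, reductions $E_{G'} \subset E$ of structure group from $G$ to $G'$ are in bijection with subbundles $F \subset \ad(E)$ of Lie algebras such that each fiber $F_x$ is conjugate to $\gG'$ (this uses $N_G(\gG')=G'$ so that the reduction is recovered from $F$ as the locus where a fixed trivialization carries $F_x$ exactly to $\gG'$). Concretely, given $E_{G'}$ one sets $F := \ad(E_{G'}) = E_{G'}(\gG')$, viewed as a subbundle of $\ad(E) = E(\gG)$; conversely, given $F$, the condition $N_G(\gG')=G'$ guarantees that the frames of $E$ carrying $F_x$ to $\gG'$ form a $G'$-subbundle, and the two constructions are mutually inverse. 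I would cite the appropriate lemma in \cite{GGM:21} for this underlying statement.

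\textbf{Step 2: Translating the two real conditions.} Next I would show that under this bijection, the reduction condition \eqref{condition11}, namely $\sigma_{\Ad(E)}(\Ad(E_{G'})) = \Ad(E_{G'})$, corresponds exactly to condition (1), $\sigma_{\ad(E)}(F) = F$. The involution $\sigma_{\ad(E)}$ on $\ad(E) = E(\gG)$ is built from $\sigma_E$ and $d\sigma_G$ (as in \eqref{xe1}), and the involution $\sigma_{\Ad(E)}$ on the group scheme $\Ad(E) = E\times_{\Ad} G$ is built from $\sigma_E$ and $\sigma_G$; since $d\sigma_G$ is the differential of $\sigma_G$, these are compatible in the sense that $\sigma_{\ad(E)}$ is the differential of $\sigma_{\Ad(E)}$ along the fiberwise identity section. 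Because $F = \ad(E_{G'})$ is the fiberwise Lie algebra of the sub-group-scheme $\Ad(E_{G'})$, preservation of $\Ad(E_{G'})$ by $\sigma_{\Ad(E)}$ is equivalent to preservation of its Lie algebra bundle $F$ by $\sigma_{\ad(E)}$. Condition (2) of the lemma is simply the fiberwise-conjugacy condition already built into the non-real bijection of Step 1, so it requires no further argument.

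\textbf{The main obstacle} I anticipate is making the equivalence ``$\sigma_{\Ad(E)}$ preserves $\Ad(E_{G'})$ $\iff$ $\sigma_{\ad(E)}$ preserves $F$'' fully rigorous, rather than merely plausible. One direction is immediate by differentiation: if $\sigma_{\Ad(E)}$ preserves the group scheme, its differential $\sigma_{\ad(E)}$ preserves the Lie algebra bundle. The reverse direction is where $N_G(\gG')=G'$ is essential: preservation of $F$ by $\sigma_{\ad(E)}$ forces $\sigma_{\Ad(E)}$ to carry each fiber $\Ad(E_{G'})_x$ (the fiberwise stabilizer of $F_x$, which equals the normalizer subgroup because $N_G(\gG')=G'$) into the stabilizer of $\sigma_{\ad(E)}(F_x) = F_{\sigma_X(x)}$, namely $\Ad(E_{G'})_{\sigma_X(x)}$. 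Thus $\sigma_{\Ad(E)}$ preserves $\Ad(E_{G'})$ globally. I would therefore emphasize that the normalizer hypothesis is doing double duty: it underlies the non-real bijection of Step 1, and it also supplies the ``integration'' from the preserved subalgebra bundle back to the preserved sub-group-scheme in Step 2. The remaining bookkeeping --- that the constructions $E_{G'} \mapsto \ad(E_{G'})$ and its inverse are genuinely mutually inverse on the real-compatible subclasses --- is routine once these two translations are in place.
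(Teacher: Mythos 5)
Your proposal is correct and follows essentially the same route as the paper, which simply invokes the non-real bijection of \cite[Lemma 2.16]{GGM:21} and notes that condition \eqref{condition11} is equivalent to condition (1) on $F$. Your Step 2 merely spells out the details of that equivalence (differentiation in one direction, the normalizer hypothesis for the other) which the paper leaves implicit.
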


\begin{proof}
It is a consequence of \cite[Lemma 2.16]{GGM:21} and the fact that
equation (\ref{condition11}) is equivalent to the above condition (1) on $F$.
\end{proof}

Let $P\,\subset\, G$ be a parabolic subgroup. Let $E_P\, \subset\, E$ be a holomorphic
reduction of structure group such that 
\begin{equation}\label{condition1}
 \sigma_{\Ad(E)}(\Ad (E_{P}))\,=\,\Ad(E_P)\,. 
\end{equation}
Let $L_P\,\subset \,P$ be a Levi subgroup of $P$, and let $U\, \subset\, P$ be the unipotent
radical. The Lie algebras of $L_P$, $P$ and $U$ will be denoted by
$\lL$, $\pP$ and $\uU$ respectively.
Let $$E_{L}\, \subset\, E_P$$ be a holomorphic reduction of the structure group of $E_P$ from $P$
to $L_P$, such that
\begin{equation}\label{condition2}
 \sigma_{\Ad(E)} (\Ad(E_{L}))\, =\, \Ad(E_{L})\,.
\end{equation}
 
Conditions (\ref{condition1}) and (\ref{condition2}) imply that
$\sigma_{\ad(E)}\,=\,d(\sigma_{\Ad(E)})$ preserves all three subbundles $E_{P}(\uU)$, $~E_{P}(\pP)$ and
$E_{P}(\lL)$ of $\ad(E)$. Denote by
$\sigma_{E_{P}(\mathfrak{u})}$, $~\sigma_{E_{P}(\mathfrak{p})}$ and
$\sigma_{E_{P}(\mathfrak{l})}$ the real structures on
$E_{P}(\mathfrak{u})$, $~E_{P}(\mathfrak{p})$ and
$E_{P}(\mathfrak{l})$ respectively induced by $\sigma_{\ad(E)}$. There is a short exact sequence
of holomorphic vector bundles with real structure
\begin{equation}\label{eq:exactsequence}
\xymatrix{
0\ar[r] &(E_{P}(\mathfrak{u}),\, \sigma_{E_{P}(\mathfrak{u})})\ar[r] &(E_{P}(\mathfrak{p}),
\, \sigma_{E_{P}(\mathfrak{p})})\ar[r] &(E_{P}(\mathfrak{l}),\,
\sigma_{E_{P}(\mathfrak{l})})\ar[r] &0.}
\end{equation}
Note that \eqref{eq:exactsequence} is a short exact sequence of Lie algebra bundles, and
all the homomorphisms in \eqref{eq:exactsequence} are Lie algebra structure preserving. A
Lie algebra bundle (right) splitting of \eqref{eq:exactsequence} is a homomorphism of Lie
algebra bundles $f\, :\, E_{P}(\mathfrak{l})\, \longrightarrow\, E_{P}(\mathfrak{p})$ such that the
composition of homomorphisms
$$
E_{P}(\mathfrak{l})\, \stackrel{f}{\longrightarrow}\, E_{P}(\mathfrak{p})\, \longrightarrow\,E_{P}(\mathfrak{l})
$$
coincides with the identity map of $E_{P}(\mathfrak{l})$, where $E_{P}(\mathfrak{p})\, \longrightarrow\,E_{P}(\mathfrak{l})$
is the projection in \eqref{eq:exactsequence}. If
$$
f\circ \sigma_{E_{P}(\mathfrak{l})}\,=\, \sigma_{E_{P}(\mathfrak{p})}\circ f\, ,
$$
then the splitting $f$ is called $(\sigma_{E_{P}(\mathfrak{l})},\, \sigma_{E_{P}(\mathfrak{p})})$-real.

\begin{Lem}\label{Pro:equivariant} Let $E_P\, \subset\, E$ be a holomorphic reduction
of structure group to $P$ satisfying (\ref{condition1}). Reductions $E_{L}$ of the
structure group of $E_{P}$ from from $P$ to $L_P$ satisfying
(\ref{condition2}) are in a natural bijective
correspondence with the holomorphic Lie algebra bundle (right) splittings of the exact sequence in
(\ref{eq:exactsequence}) that are $(\sigma_{E_{P}(\mathfrak{l})},\,
\sigma_{E_{P}(\mathfrak{p})})$-real. 
\end{Lem}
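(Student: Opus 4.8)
The plan is to deduce the statement from the corresponding non-real result, namely \cite[Lemma 2.17]{GGM:21}, by isolating precisely where the real structures enter. That lemma already supplies a bijection between holomorphic reductions $E_{L}\subset E_{P}$ of the structure group from $P$ to $L_P$ and holomorphic Lie algebra bundle (right) splittings $f$ of \eqref{eq:exactsequence}; under this bijection the image of $f$ is exactly the Lie subalgebra bundle $\ad(E_{L})\,=\,E_{L}(\lL)\,\subset\, E_{P}(\pP)$, a complement to $E_{P}(\uU)$. It therefore suffices to check that, under this correspondence, the reality condition \eqref{condition2} imposed on $E_{L}$ matches the $(\sigma_{E_{P}(\lL)},\,\sigma_{E_{P}(\pP)})$-reality condition imposed on $f$. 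The asserted bijection is then obtained by restricting the bijection of the non-real lemma to the real objects on both sides.

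First I would record that \eqref{condition2} is equivalent to the Lie-algebra-level statement $\sigma_{\ad(E)}(\ad(E_{L}))\,=\,\ad(E_{L})$. Since $\sigma_{\ad(E)}\,=\,d\sigma_{\Ad(E)}$, differentiating \eqref{condition2} gives one implication, while for the converse one uses that $\sigma_{\Ad(E)}(\Ad(E_{L}))$ is again a sub-group scheme of Levi type whose fiberwise Lie algebra is $\sigma_{\ad(E)}(\ad(E_{L}))$, so that it is recovered from its Lie algebra bundle; this is exactly the reasoning used in the proof of Lemma \ref{lem:realreduction}, where \eqref{condition11} was seen to be equivalent to condition (1) on the subalgebra bundle $F$. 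Because \eqref{condition1} guarantees that $\sigma_{\ad(E)}$ preserves $E_{P}(\pP)$ and restricts there to $\sigma_{E_{P}(\pP)}$, and since $\ad(E_{L})\,=\,\operatorname{Im}(f)\,\subset\, E_{P}(\pP)$, the condition \eqref{condition2} becomes simply $\sigma_{E_{P}(\pP)}(\operatorname{Im}(f))\,=\,\operatorname{Im}(f)$.

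It then remains to prove the purely linear-algebraic equivalence: a holomorphic splitting $f$ of the real exact sequence \eqref{eq:exactsequence} has $\sigma_{E_{P}(\pP)}$-invariant image if and only if $f$ is real. One implication is immediate, for if $f\circ\sigma_{E_{P}(\lL)}\,=\,\sigma_{E_{P}(\pP)}\circ f$, then $\sigma_{E_{P}(\pP)}(f(w))\,=\,f(\sigma_{E_{P}(\lL)}(w))$ lies in $\operatorname{Im}(f)$ for every $w$. For the converse I would use that the projection $p\,:\,E_{P}(\pP)\,\longrightarrow\, E_{P}(\lL)$ in \eqref{eq:exactsequence} is itself real, that is $p\circ\sigma_{E_{P}(\pP)}\,=\,\sigma_{E_{P}(\lL)}\circ p$ (this is built into \eqref{eq:exactsequence} being a sequence of real bundles), together with the fact that $f$ is the unique right inverse of $p$ with image $\operatorname{Im}(f)$. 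Working over a point $x$ and its image $\sigma_{X}(x)$: for $w\,\in\, E_{P}(\lL)_{x}$ the element $\sigma_{E_{P}(\pP)}(f(w))$ lies in $\operatorname{Im}(f)$ over $\sigma_{X}(x)$ by invariance, and $p(\sigma_{E_{P}(\pP)}(f(w)))\,=\,\sigma_{E_{P}(\lL)}(p(f(w)))\,=\,\sigma_{E_{P}(\lL)}(w)$; since $f(\sigma_{E_{P}(\lL)}(w))$ is the unique element of $\operatorname{Im}(f)$ over $\sigma_{X}(x)$ projecting to $\sigma_{E_{P}(\lL)}(w)$, the two coincide, yielding reality of $f$.

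The delicate point, and the step I expect to require the most care, is the equivalence of the group-scheme condition \eqref{condition2} with its Lie-algebra counterpart $\sigma_{\ad(E)}(\ad(E_{L}))\,=\,\ad(E_{L})$, which hinges on recovering the Levi sub-group scheme $\Ad(E_{L})$ from its fiberwise Lie algebra, exactly as in Lemma \ref{lem:realreduction}. Everything else is a formal consequence of \cite[Lemma 2.17]{GGM:21} and the reality of the projection $p$ in \eqref{eq:exactsequence}; once the two reality conditions are matched, restricting the non-real bijection to real objects on both sides gives the desired correspondence.
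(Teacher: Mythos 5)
Your proposal is correct and follows essentially the same route as the paper: the paper's proof simply applies Lemma \ref{lem:realreduction} with $G=P$ and $G'=L_P$ to reduce to $\sigma_{\ad(E)}$-invariant subalgebra bundles fiberwise conjugate to $\lL$, leaving the passage to real splittings implicit, while you start from the non-real splitting lemma \cite[Lemma 2.17]{GGM:21} and explicitly verify that invariance of $\operatorname{Im}(f)$ is equivalent to reality of $f$. Both arguments rest on the same two facts — the group-scheme condition \eqref{condition2} is equivalent to its Lie-algebra counterpart, and the reality of the projection in \eqref{eq:exactsequence} upgrades image-invariance to equivariance of the splitting — so yours is a correct, slightly more detailed rendering of the paper's argument.
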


\begin{proof}
Setting $G\,=\, P$ and $G'\,=\,L_P$ in Lemma \ref{lem:realreduction},
we conclude that the space of reductions of structure group of
$E_P$ from $P$ to $L_P$ satisfying (\ref{condition2})
is in bijective correspondence with
subbundles $F\,\subset\, E_{P}(\pP)$ such that
$\sigma_{\ad(E)}(F)\,=\,F$ and $F_{x}$ is conjugate to $\lL\,,$ once we identify
$E_{P}(\pP)_x$ with $\pP$ for any $x\,\in\, X$, by fixing an element of $(E_P)_x$.
\end{proof}

Let $(E,\varphi,\, \sigma_E)$ be a $\alpha$-polystable $\ssigma$-real $L$-twisted Higgs pair
that is not $\alpha$-stable. From Proposition \ref{pro:aut} it follows that
there exists a non-central element $\eta\,\in\,\aut(E,\varphi,\,\sigma_{E})$, in other words there exists
$\eta\,\in\, H^{0}(X, \, E([\mathfrak{g},\mathfrak{g}]))$ such that $$\sigma_{\aut}(\eta)\,=\,\eta\,.$$
There is an element
$$
u \, :=\,  u_r +\sqrt{-1}u_i\, \in\, [{\mathfrak g},\, {\mathfrak g}]\, ,
$$
where $u_r,\, u_i\, \in\, \kK\,=\, \text{Lie}(K)$ and $[u_r,\, u_i]\,=\, 0$, such that
$\eta(x)\, \in\, E([\mathfrak{g},\mathfrak{g}])_x$ lies in the conjugacy class of $u$ for every $x\, \in\, X$;
as before, $K$ is a maximal compact subgroup of $G$ preserved by $\sigma_G$ (see Proposition \ref{max}).
This element $u \, :=\,  u_r +\sqrt{-1}u_i$ satisfies the equation $d\sigma_{G}(u)\,=\, u$.
Let $a\,\in \,[\kK,\,\kK]$ be an infinitesimal generator of the torus generated by $u_i$ and $u_r$.
Then, $a$ also satisfies the equation $d\sigma_{G}a\,=\,a$.
Furthermore, the subgroup $$K_1\,:=\,Z_{K}(a)\,=\, \{h\,\in\, K \, \mid\, \Ad(h)a\,=\,a\}$$ is preserved by 
$\sigma_{G}$, because $K$ is preserved by $\sigma_G$. We denote by $G_1$ the complexification $K_{1}^{\CC}$ and by
$\sigma_{G_{1}}$ the restriction $\sigma_{G}|_{K_{1}^{\CC}}\,.$
The section $\eta\,\in\, H^{0}(X,\,\ad(E))$ induces a $G$-equivariant map
$$\phi_{\eta}\,:\,E\,\longrightarrow\,\mathfrak{g}$$ which is also $(\sigma_{E},\,
d\sigma_{G})$-real, because $\sigma_\aut(\eta)\,=\,\eta$. Therefore, we conclude that
$$E_1\,:=\,\{\,e\,\in \,E\,\mid\,\, \phi_{\eta}(e)\,=\,u\}$$
is preserved by $\sigma_E$, and $E_{1}$ is a $(\sigma_X,\sigma_{G_{1}},c)$-real
principal $G_{1}$-bundle. Since $\rho$ is a
$(\sigma_{G},\sigma_{\VV})$-compatible representation, the complex vector subspace
$$\mathbb{V}_1\,:=\,\{v\,\in\,\mathbb{V}\,\mid \, \rho(a)v\,=\,0\}$$ is equipped with
a real structure $\sigma_{\mathbb{V}_1}\,=\,\sigma_{\mathbb{V}}|_{\mathbb{V}_1}$, and
$$\rho_1\,:=\,\rho|_{K_1}$$ 
is a $(\sigma_{G_1},\sigma_{\VV_{1}})$-compatible representation.

\begin{Pro}\label{prop:JHpoly} The above $(\sigma_{X}, \sigma_{G_{1}}, c,\sigma_L,\sigma_{\mathbb{V}_1},\pm)$-real
$L$-twisted Higgs pair 
$$(E_1,\,\varphi_1\,:=\,\varphi|_{E_1},\, \sigma_{E_1}\,:=\,\sigma_E|_{E_1})$$ is $\alpha$-polystable.
\end{Pro}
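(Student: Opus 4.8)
The plan is to establish the real $\alpha$-polystability of $(E_1,\varphi_1,\sigma_{E_1})$ directly, following the non-real argument of \cite[Sections 2.9--2.11]{GGM:21} for the numerical content and supplementing it with the real-structure compatibility furnished by Lemmas \ref{lem:realreduction} and \ref{Pro:equivariant}. The structural facts I would record first are that $G_1=K_1^\CC$ is exactly the Levi subgroup $L_s$ attached to $s:=\sqrt{-1}\,a\in\sqrt{-1}\kK$ (since $Z_G(a)=K_1^\CC$), that $E_1\subset E$ is the associated $\sigma_E$-invariant reduction of $E$ to $L_s$, that $\VV_1=\VV_s^0$ and $\varphi_1=\varphi$, and that, writing $\kK_1:=\Lie(K_1)$, one has $\alpha\in\mathfrak z(\kK)\subset\mathfrak z(\kK_1)$ with $d\sigma_{G_1}(\alpha)=-\alpha$, so that the parameter $\alpha$ and the equation \eqref{eq:Einstein} make sense for $G_1$-pairs. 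I emphasize that one may not shortcut the argument through Corollary \ref{Cor:realpairspairs}, since that corollary rests on Theorem \ref{th:Hitchin--Kobayashi}, whose proof uses the present proposition; the reasoning must therefore stay within the real (semi/poly)stability conditions.

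For semistability I would first note that the real $\alpha$-semistability of $(E,\varphi,\sigma_E)$ already forces the underlying pair $(E,\varphi)$ to be $\alpha$-semistable: were it not, its canonical (maximally destabilizing) reduction would be unique and $\varphi$-compatible, hence its adjoint subgroup scheme would be preserved by $\sigma_{\Ad(E)}$, so the reduction would satisfy \eqref{eq:stabilitycondition} and contradict real $\alpha$-semistability. Granting this, let $s_1\in\sqrt{-1}\kK_1\subset\sqrt{-1}\kK$ and let $E_{1,Q}\subset E_1$ be any reduction to $Q:=P_{s_1}\cap G_1$ with $\varphi_1\in H^0(X,E_{1,Q}(\VV_{1,s_1})\otimes L)$, where $\VV_{1,s_1}:=\VV_1\cap\VV_{s_1}$. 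Extending the structure group gives a reduction of $E$ to $P_{s_1}$, and comparing the defining characters (both equal $x\mapsto B(s_1,x)$, with $B$ restricting to $\gG_1$) yields the degree identity
\begin{equation*}
\deg E_1(E_{1,Q},s_1)\;=\;\deg E(E_{1,Q}\times_Q P_{s_1},\,s_1).
\end{equation*}
Since $\VV_{1,s_1}\subset\VV_{s_1}$ the Higgs condition is inherited, and $\alpha$-semistability of $(E,\varphi)$ gives $\deg E_1(E_{1,Q},s_1)-B(s_1,\alpha)\ge 0$; thus $(E_1,\varphi_1)$ is $\alpha$-semistable, and a fortiori $(E_1,\varphi_1,\sigma_{E_1})$ is real $\alpha$-semistable.

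For the polystability clause I would work throughout with $\sigma_{E_1}$-compatible reductions. Given such a reduction $E_{1,Q}\subset E_1$ satisfying \eqref{eq:stabilitycondition} for $\sigma_{\Ad(E_1)}$ and with equality $\deg E_1(E_{1,Q},s_1)=B(s_1,\alpha)$, the goal is to produce a $\sigma_{E_1}$-compatible Levi reduction $E_{1,L_{s_1}^{G_1}}\subset E_{1,Q}$, with $L_{s_1}^{G_1}=L_{s_1}\cap G_1$ the Levi of $Q$ in $G_1$, such that $\varphi_1\in H^0(X,E_{1,L_{s_1}^{G_1}}(\VV_{1,s_1}^0)\otimes L)$, which is precisely the splitting demanded by \eqref{els} (here $\VV_{1,s_1}^0=\VV_1\cap\VV_{s_1}^0$). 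The plan is to transport the reduction to $E$, apply the real $\alpha$-polystability of $(E,\varphi,\sigma_E)$ to obtain a $\sigma_E$-compatible Levi reduction $E_{L_{s_1}}\subset E$ as in \eqref{els}, and then intersect with $E_1$ to descend to $E_{1,L_{s_1}^{G_1}}$; the $\sigma_{E_1}$-invariance of the descended reduction follows from the $\sigma_E$-invariance of both $E_{L_{s_1}}$ and $E_1$, phrased through the real splitting criterion of Lemma \ref{Pro:equivariant} for \eqref{condition2}.

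The step I expect to be the main obstacle is precisely this transport of a $\sigma_{E_1}$-compatible equality-reduction of $E_1$ to a $\sigma_E$-compatible reduction of $E$. The difficulty is concrete: because $d\sigma_G(a)=a$ one has $d\sigma_G(\sqrt{-1}\,a)=-\sqrt{-1}\,a$, so $\sigma_G$ fixes the Levi $G_1=L_s$ but interchanges $P_s$ with its opposite $P_s^-$. Consequently a parabolic reduction of $E_1$ does not extend to a $\sigma_E$-invariant parabolic reduction of $E$ in the naive fiberwise way, and the subalgebra bundle generated by $E_{1,Q}(\mathfrak q)$ does not enlarge to a parabolic of $\gG$ (its normalizer $N_\gG(\mathfrak q)$ equals $\mathfrak q$). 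The way around this is exactly the subgroup-scheme formulation \eqref{eq:stabilitycondition} of Lemma \ref{lem:realreduction} together with the uniqueness of the canonical and Levi reductions: these canonical objects are automatically $\sigma$-invariant, and it is this, rather than any $\sigma_G$-invariant parabolic of $G$, that lets one match the reductions of $E_1$ and $E$. By contrast, the degree identity displayed above is routine, reducing to the agreement of the characters $\chi_{s_1}$ of $P_{s_1}$ and of $Q$ after restriction along $Q\hookrightarrow P_{s_1}$.
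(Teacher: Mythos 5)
Your proposal follows essentially the same route as the paper's proof: identify $G_1=K_1^{\CC}$ with the Levi $L_s$ for $s=\sqrt{-1}a$, extend parabolic reductions of $E_1$ to parabolic reductions of $E$, match the degrees through the restricted antidominant character, invoke the (real) $\alpha$-semistability/polystability of $(E,\varphi,\sigma_E)$, and descend the resulting Levi data back to $E_1$. There are two local divergences worth noting. For semistability you detour through the underlying pair, first arguing that real semistability forces ordinary semistability of $(E,\varphi)$ via the uniqueness (hence $\sigma$-invariance) of the canonical destabilizing reduction, and then testing arbitrary reductions of $E_1$; the paper instead applies real semistability of $(E,\varphi,\sigma_E)$ directly to the extended real reduction, which avoids importing the Harder--Narasimhan machinery. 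For polystability the paper works through Lemma \ref{Pro:equivariant}: it produces the real splitting $\omega_1$ for the $E_1$-sequence by restricting the splitting $\omega$ for $E$ (the character decomposition, ``setting $\eta=1$''), and treats the Higgs-field condition by the identity $\rho(\omega(\psi_{\sigma,s}(\varphi)))=0$. You instead propose to intersect the Levi reduction $E_{L_{s_1}}\subset E$ with $E_1$. These are equivalent in principle, but the intersection is precisely where the character decomposition is needed: a priori the Levi reduction of $E$ (equivalently the splitting $\omega$) has no reason to be compatible with the subbundle $E_{1,Q}(\pP_{1,s_1})\subset E_{P_{s_1}}(\pP_{s_1})$, and one must project onto the weight-zero part for the torus generated by $a$ to land back inside $G_1$; your plan omits this. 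Finally, while you correctly flag the transport of a real reduction of $E_1$ to a real reduction of $E$ as the crux, your proposed resolution via ``uniqueness of the canonical and Levi reductions'' does not apply in the polystability test, where the equality reduction $E_{1,Q}$ is not canonical; what is actually needed (and is only asserted, not proved, in the paper) is that the subgroup scheme $\Ad(E_{1,Q}\times_Q P_{s_1})\subset\Ad(E)$ inherits the invariance \eqref{eq:stabilitycondition} from that of $\Ad(E_{1,Q})\subset\Ad(E_1)$, which requires a choice of $s_1$ compatible with $d\sigma_{G}$ and not merely an appeal to uniqueness. These are gaps of rigor comparable to those in the paper's own two-line treatment of the same steps, not a wrong approach.
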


\begin{proof}
First, we shall prove that $(E_1,\,\varphi_1,\,\sigma_{E_1})$
is $\alpha$-semistable. We denote by $\kK_{1}$ the Lie algebra of $K_{1}$. For $s\,\in \,\sqrt{-1}\, \kK_{1},$ consider
the parabolic subgroup 
$$P_{1,s}\,:=\,\{g\,\in\, G_{1}\,\mid\,\,
e^{ts}ge^{-ts}~\text{ is bounded when }~t\rightarrow\infty\}\,,$$
and the Levi subgroup
$$L_{1,s}\,:=\,\{g\,\in\, P_{1,s}\,\mid\,\,
\Ad(g)s\,=\,s\}\,.$$ 

Any holomorphic reduction of structure group $\sigma_1$ of $E_1$ to $P_{1,s}$ such that $$\sigma_{\Ad(E)}(\Ad E_{P_{1,s}})
\,=\,\Ad E_{P_{1,s}}$$ can be extended to a real reduction of structure group $\sigma$ to $P_s$ such that
$$\sigma_{\Ad(E)}(\Ad E_{P_{s}})\,=\,\Ad E_{P_{s}}\, .$$ Moreover, we have $$\deg E(\sigma,s)
\,=\,\deg E_{1}(\sigma_{1},s)\,.$$ 
Therefore, using the given condition that $(E,\,\varphi,\,\sigma_{E})$ is
$\alpha$-semistable it is deduced that $(E_1,\,\varphi_1,\,\sigma_{E_1})$ is also $\alpha$-semistable. 

From Lemma \ref{Pro:equivariant}, it follows that $(E_1,\,\varphi_1,\,\sigma_{E_1})$ is $\alpha$-polystable
if for every $s\,\in\, \sqrt{-1} \kK$, and for every holomorphic reduction
of structure group $\sigma$ of $E_1$ to $P_s$ that satisfies the two conditions
$\sigma_{\Ad(E_{1})}(\Ad E_{P_{1,s}})\, =\, \Ad E_{P_{1,s}}$ and 
$$\deg E_{1}(\sigma_1,s)-B(s,\alpha)\, =\, 0\,,$$ there is a real splitting $\omega_1$ of
the following exact sequence of real vector bundles
$$
\xymatrix{
0\ar[r] &(\,{E_{1}}_{\sigma_{1}}(\mathfrak{u_1}),\sigma_{{E_{1}}_{\sigma_{1}}(\mathfrak{u_1})})
\ar[r] &(\,{E_{1}}_{\sigma_{1}}(\mathfrak{p_{1}}),\sigma_{{E_{1}}_{\sigma_{1}}(\mathfrak{p_{1}})})
\ar[r] &(\,{E_{1}}_{\sigma_{1}}(\mathfrak{l_{1}}), \sigma_{{E_{1}}_{\sigma_{1}}(\mathfrak{l_{1}})})\ar[r]&0\,.}
$$
The argument for the existence of the splitting is identical to the argument for the non-real case.
(See \cite[Proposition 2.18]{GGM:21} for details).

On the other hand, we have $\varphi\,\in\, H^{0}(X,\, E_{L_s}(\mathbb{V}^{0}_{s})\otimes L)\,$
if and only if $$\rho(\omega(\psi_{\sigma,s}(\varphi)))\,=\,0\, ,$$
where $\omega$ is a splitting of (\ref{eq:exactsequence}) and $\psi_{\sigma_1,s}\,\in\, H^{0}(X,\, E_{\sigma}(\pP))$ is 
the section which is equal to $s$ on the fibers. By the same reasoning,
$ \varphi_1 \,\in\, H^{0}(X,\,E_{1\,L_{1,s}}(\VV_1)\otimes L) $ if and only if
\begin{equation}\label{eq:conditionvarphi}
 \rho(\omega_{1} (\psi_{\sigma_1,s}(\varphi)))\,=\,0\,,
\end{equation}
where $\psi_{\sigma_1,s}\,\in\, H^{0}(X,\, E_{1\,\sigma_{1}}(\pP_1))$ is the section
which is equal to $s$ on the fibers.
If we decompose $\pP$ and $\lL$, using characters $\eta \,\in\, \Hom(T,\,S^{1})$, where
$T\,\subset\, H$ is a maximal torus,
then $\omega_1$ is the restriction of $\omega$, which corresponds to setting $\eta\,=\,1$. 

Since $(E,\,\varphi)$ is $\alpha$-polystable, we have
$\varphi\,\in\, H^{0}(X,\, E_{L_s}(\mathbb{V}^{0}_{s})\otimes L)$. As noted before, this implies
that $\rho(\omega (\psi_{\sigma,s}(\varphi)))\,=\,0\,$
and hence \eqref{eq:conditionvarphi} holds. It was observed earlier that
$$\varphi_1 \,\in\, H^{0}(X,\, E_{1\,L_{1,s}}(\VV_1)\otimes L)$$
if \eqref{eq:conditionvarphi} holds.
\end{proof}

\begin{proof}[{Proof of Theorem \ref{th:JHreduction}}]
From Proposition \ref{prop:JHpoly}, we obtain a $\sigma_{E}$-compatible reduction of structure group
of a polystable $\ssigma$-real $L$-twisted Higgs pair $(E,\,\varphi,\,\sigma_{E})$ from $(G,\,\rho)$
to a subgroup $(K_{1}^{\CC},\, \rho_{1})$ with real structure, where $K_{1}^{\CC}\,\subsetneq\, G.$
We can iterate this process and finally, within a finite number $n$ of steps, we obtain a
$\sigma_{E}$-compatible holomorphic reduction of structure group
$(E_{n},\,\varphi_{n})$ of $(E,\,\varphi)$ from
$(G,\,\rho)$ to $(K_{n}^{\CC},\,\rho_{n})$, where $K_{n}^{\CC}$ is a Levi subgroup
of $G$ and $(E_{n},\,\varphi_{n},\,\sigma_{E}|_{E_{n}})$ is $\alpha$-stable.
This proves Theorem \ref{th:JHreduction}.
\end{proof}

\begin{Rem}The uniqueness of the Jordan--H\"older reduction (up to conjugation)
follows immediately from the uniqueness in the usual (with no real structures) 
case (see \cite[Proposition 2.20]{GGM:21}).
\end{Rem}

\subsection{Stability implies the existence of solution}\label{Subsection3.6}

In this section, we prove one implication of Theorem \ref{th:Hitchin--Kobayashi} for $\ssigma$-real $L$-twisted 
$G$-Higgs pairs $(E,\,\varphi,\, \sigma_E)$ that are $\alpha$-stable.

Let $\EE$ be a $C^\infty$ principal
$G$-bundle over $X$ equipped with a $(\sigma_X,\sigma_G,c)$-real structure $\sigma_\EE$ (see Section \ref{real-riemann}).
Let $h$ be a $\sigma_\EE$-compatible $C^\infty$
reduction of structure group of $\EE$ from $G$ to the maximal compact subgroup
$K$. By Proposition \ref{pr:dolbeautyconexiones}, there is a bijective correspondence between $\sigma_{\EE}$-compatible 
connections on $\EE_\K$ and $\sigma_{\EE}$-real complex structures 
on $\EE$. As a consequence, after fixing a $\sigma_L$-compatible $\U(1)$-connection on $L$, we
conclude that the space of all $\ssigma$-real $L$-twisted 
Higgs pairs $(E,\,\varphi, \,\sigma_E)$ such that the underlying $C^\infty$-bundle to $E$ is $\EE$ is
in bijective correspondence with 
the space of all triples $(\sigma_\EE,\,A,\,\varphi)$ such that
\begin{itemize}
\item $\sigma_\EE$ is a $(\sigma_X,\,\sigma_{G},\,c)$-real structure on $\EE$,

\item $A$ is a $\sigma_{\EE}$-compatible connection on $\EE_{K}$, and

\item $\varphi$ is a holomorphic section of $\EE(\VV)\otimes L$ satisfying 
$$\sigma_{V}\otimes\sigma_L(\varphi)\,=\,\pm\sigma_{X}^{*}(\varphi)\, ,$$
where $\sigma_V$ is the involution of $V\,=\,\EE(\VV)$ induced by $\sigma_{\EE}$ and $\sigma_{\VV}$, and $\sigma_L$ is the fixed 
involution on $L$. Here, abusing notation, we are denoting $L$ and its underlying smooth line bundle in the same way.
 \end{itemize}
We will denote by $\TTT$ the set of triples $(\sigma_\EE,A,\varphi)$ satisfying the above conditions. 

Let $\AAA$ be the space of connections on $\EE_{K}$. Let
$\SSS$ be the space of all $C^\infty$ sections of $V\otimes L$.
The product space
$$
\XXX\,:=\,\AAA\times\SSS
$$
is an
infinite-dimensional K\"ahler manifold equipped with a
Hamiltonian action of the
gauge group
\begin{equation}\label{ex}
\KKK\,=\,\Omega^0(E_{K}(K))\, .
\end{equation}
Take any $$\alpha\,\in\, \zZ(\kK)\subset \Lie(\KKK)\, .$$ In this case, the
moment map is given by
\begin{equation}\label{eq:momentmap}
\mu^{\alpha}(A,\varphi)\,:=\,\Lambda F_{A}+\mu(\varphi)-\sqrt{-1}\, \alpha
\end{equation}
for every $(A,\,\varphi)\,\in\, \XXX$, where $\mu(\varphi)$ is defined as in Equation (\ref{def:muh}).
As before,
$$
B\, \in\, \text{Sym}^2({\mathfrak g}^*)^G
$$
is a $K$-invariant non-degenerate bilinear form on the Lie algebra ${\mathfrak g}\,=\,\Lie(G)$ which
is positive on $\mathfrak k$.
Let $\left\langle\,,\,\right\rangle$ be the bilinear form on $\Lie(\KKK)$ induced by $B$, where $\KKK$
is constructed in \eqref{ex}.
The maximal weight of the action of 
$s\,\in\,\Lie(\KKK)$ on $(A,\,\varphi)\,\in\, \XXX$ is defined as follows:
\begin{equation}\label{eq:limite1}
\lambda^{\alpha}((A,\,\varphi),\,s)\,:=\,\lim_{t\rightarrow \infty} \lambda^{\alpha}_{t}((A,\varphi),s)\,,
\end{equation}
where 
\begin{equation}\label{eq:limite}
\lambda^{\alpha}_{t}((A,\varphi),s)\, :=\,\left\langle \mu^{\alpha}(e^{\sqrt{-1}t\,s}A \,,\,e^{\sqrt{-1}t\,s}\varphi)\,,\,
s\right\rangle\, .
\end{equation}

The limit in (\ref{eq:limite}) exists by Lemma 2.1.2 of \cite{MIR:981}. The integral of the moment map
in \eqref{eq:momentmap} is defined by
\begin{equation}\label{eq:integralmomentmap}
 \Psi^{\alpha}((A,\,\varphi),\,e^{\sqrt{-1} s})\,:=\,
\int_{0}^{1}\lambda^{\alpha}_{t}((A,\varphi),s)\,dt\,.
\end{equation}

\begin{Rem}\label{rem:propfundintegralmoment}
If we fix the pair $(A,\, \varphi)$ in \eqref{eq:integralmomentmap}, then $e^{\sqrt{-1}s}
\,\in\,\Hh$ is a critical point of $\Psi^{\alpha}$ if and only if $e^{\sqrt{-1}s}(A,\,
\varphi)$ is a solution of $\mu^{\alpha}(A,\,\varphi)\,=\,0,$ where $\mu^{\alpha}$ is defined in \eqref{eq:momentmap}.
\end{Rem}

\begin{Def}\label{def:section} Let $E$ be a holomorphic principal $G$-bundle $X$, and let $h$ be a
$C^\infty$ reduction of
structure group of $E$ from $G$ to $K$. Let $A$ be the Chern connection on the principal $K$-bundle $E_h$,
given by $h$, corresponding to the holomorphic structure on $E$.
Take $s\,\in\, \sqrt{-1}\kK$, and let $\sigma$ be a holomorphic
reduction of structure group of $E$ from $G$ to
$P_s$, where $P_s$ is the parabolic subgroup of $G$ defined by $s$. The section $\psi_{h,\sigma,s}\,\in\,
\Omega^{0}(E_h(\sqrt{-1}\kK))$ is defined as follows: the reduction $\sigma$ defines a holomorphic map \begin{equation}\label{def:sectionxi}
 \xi: E_{h}\,\longrightarrow\, G/P_{s}\,.
\end{equation}
If $e\,\in\, E_h$, then $\xi(e)\,=\,P$ is a parabolic subgroup of $G$ conjugate to $P_s$. 
From \cite[Lemma 2.6]{GGM:21}, using the antidominant character $\chi_s$, it follows that there exists
$s_{\xi(e)}\,\in\,\sqrt{-1}\kK$, such that $P=P_{s_{\xi(e)}}.$ The map
\begin{equation}\label{def:sectionpsi}
 \mapnormal{E_{h}}{\sqrt{-1}\kK}{e}{s_{\xi(e)}}{\psi}
 \end{equation}
induces a section $\psi_{h,\sigma,s}\,\in\,\Omega^{0}(E_h(\sqrt{-1}\kK)).$ 
\end{Def}

\begin{Pro}\label{pro:pesosygrados1}
Let $(E,\,\varphi,\,\sigma_E)$ be a $\ssigma$-real Higgs pair. Let $h$ be a $\sigma_E$-compatible
$C^\infty$ reduction of structure group of $E$ from $G$ to $K$. 
Let $(\sigma_\EE,\,A,\,\varphi)\,\in\, \TTT\, $ be the triple corresponding to $(E,\,\varphi,\,\sigma_E)$. Let
$\sigma$ be a holomorphic reduction of structure group of $E$ from $G$ to $P_s$. Then 
$$\deg E(\sigma,\,s)-B(s,\,\alpha)\,=\,\lambda^{\alpha}((A,\,\varphi),\, \sqrt{-1}\psi_{h,\sigma,s})\,,$$
where $\psi_{h,\sigma,s}\,\in\,\Omega^{0}(E_h(\sqrt{-1}\kK))$ is constructed above.
\end{Pro}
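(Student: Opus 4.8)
The plan is to reduce the statement to the corresponding maximal-weight computation for the underlying (non-real) $L$-twisted Higgs pair. The starting observation is that neither side of the asserted identity involves the real structure $\sigma_E$. Indeed, $\deg E(\sigma,s)$ is defined in \eqref{eq:deg} solely from the holomorphic reduction $\sigma$ to $P_s$ and the element $s$; the number $B(s,\alpha)$ depends only on $s$ and $\alpha$; and the maximal weight $\lambda^{\alpha}((A,\varphi),\sqrt{-1}\psi_{h,\sigma,s})$ is evaluated on the ambient K\"ahler manifold $\XXX=\AAA\times\SSS$ through the moment map $\mu^{\alpha}$ of \eqref{eq:momentmap}, while the section $\psi_{h,\sigma,s}$ is produced in Definition \ref{def:section} from the data $h$, $\sigma$ and $s$ alone. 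The sole function of the real structure here is to guarantee, through Proposition \ref{pr:dolbeautyconexiones} and the passage to the triple $(\sigma_\EE,A,\varphi)\in\TTT$, that the connection $A$, the reduction $h$ and the section $\psi_{h,\sigma,s}$ are well defined and $\sigma_E$-compatible. Hence the identity coincides verbatim with the maximal-weight computation already established in \cite{GGM:21}, and I would deduce it directly from there.

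For completeness I would recall the structure of that computation. Writing the flow of \eqref{eq:limite} in the direction $s=\sqrt{-1}\psi_{h,\sigma,s}$ as $(A_t,\varphi_t):=(e^{-t\psi_{h,\sigma,s}}\cdot A,\,e^{-t\psi_{h,\sigma,s}}\cdot\varphi)$, one splits the integrand $\lambda^{\alpha}_t$ according to the three summands of $\mu^{\alpha}=\Lambda F_A+\mu(\varphi)-\sqrt{-1}\alpha$, each paired under $\langle\,,\,\rangle$ with $\sqrt{-1}\psi_{h,\sigma,s}$. The central summand contributes $-B(s,\alpha)$: because $\alpha$ is central and $\psi_{h,\sigma,s}(e)$ is pointwise $K$-conjugate to $s$, the density $B(\alpha,\psi_{h,\sigma,s})$ is the constant $B(\alpha,s)$, whose integral against $\omega$ yields, with the sign convention of $\langle\,,\,\rangle$, exactly $-B(s,\alpha)$. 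The curvature summand converges, as $t\to\infty$, to $\deg E(\sigma,s)$, by the Chern--Weil identification of $\lim_{t\to\infty}\int_X\langle\Lambda F_{A_t},\sqrt{-1}\psi_{h,\sigma,s}\rangle\,\omega$ with the degree of the line bundle $E_{P_s}(\widehat{\chi}_s)$ attached in Definition \ref{def:section} to the antidominant character $\chi_s$, the rational power $\widehat{\chi}_s$ being supplied by \cite[Lemma 2.4]{GGM:21}. Finally, the Higgs summand is handled exactly as in \cite{GGM:21}: its contribution along $\sqrt{-1}\psi_{h,\sigma,s}$ is the non-negative maximal weight of $\varphi$, which vanishes on the reductions $\sigma$ with $\varphi\in H^{0}(X,\,E_{P_s}(\VV_s)\otimes L)$ relevant to the stability condition, so that the curvature and central contributions combine to the asserted equality.

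The step I expect to be the main obstacle is this last convergence, namely that the $t\to\infty$ limit of the curvature pairing reproduces $\deg E(\sigma,s)$. It is here that one must use the explicit construction of $\psi_{h,\sigma,s}$ from the holomorphic map $\xi\colon E_h\to G/P_s$ of \eqref{def:sectionxi} together with \cite[Lemma 2.6]{GGM:21}, and control the curvature $F_{A_t}$ along the complexified flow. Since each of these ingredients is exactly the one appearing in \cite{GGM:21} and none is altered by the presence of $\sigma_E$, no new analytic input is needed beyond recording that $h$ and $\psi_{h,\sigma,s}$ are $\sigma_E$-compatible, which is immediate from Proposition \ref{pr:dolbeautyconexiones}.
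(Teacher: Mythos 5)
Your proposal is correct and follows essentially the same route as the paper: the key step in both is the observation that neither $\deg E(\sigma,s)-B(s,\alpha)$ nor $\lambda^{\alpha}((A,\varphi),\sqrt{-1}\psi_{h,\sigma,s})$ sees the real structure, so the identity reduces verbatim to the maximal-weight computation of \cite{GGM:21} and \cite{MIR:98}. The paper fills in the citation by passing through the eigenbundle filtration $V(\psi_{h,\sigma,s})$ and \cite[Lemmas 4.2--4.3]{MIR:98} rather than the term-by-term limit you sketch, but this is only a difference in how the borrowed computation is presented, not in the argument.
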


\begin{proof}
First, recall that the degree of a $\ssigma$-real Higgs pair is equal to the degree of its underlying Higgs pair 
(see \eqref{eq:deg}); also recall that the maximal weight of $(\sigma_\EE,\,A,\,\varphi)$ is the maximal weight
of $(A,\, \varphi)$. Therefore, it suffices to prove the proposition by forgetting real structures.

In \cite[Section 2.1.6]{MIR:981} it is proved that there is a bijective correspondence between pairs $(\sigma,\,s)$ and
filtrations
\begin{equation}\label{eq:filtration1}
V(\psi_{h,\sigma,s})
\end{equation}
of $V\,=\,E(\VV)$ constructed as follows: let $\lambda_1,\,\ldots,\, \lambda_r$ be the eigenvalues of
$\rho(-\sqrt{-1}\psi_{h,\sigma,s})$; then $E(\VV)^{\lambda_k}\,=\,\bigoplus_{j\leq k}E(\VV)(\lambda_j) $, where $E(\VV)(\lambda_j)$ are the eigenbundles of eigenvectors $\lambda_j.$

From \cite[Lemma 4.2]{MIR:98} we know that for the Hamiltonian action of $\KKK$ on $\AAA$, the maximal weight $\lambda( 
A,\, -\sqrt{-1}\psi_{\sigma,s})$ is equal to
\begin{equation}\label{eq:deg112}
 \lambda_r \deg(E(\VV))+\sum_{k=1}^{r-1}(\lambda_{k}-\lambda_{k-1})\deg(E(\VV)^{\lambda_k}).
\end{equation}
Equation (\ref{eq:deg112}) coincides with one of the different ways to calculate $\deg E(\sigma,s)$ defined in equation (\ref{eq:deg}) (see \cite[Lemma 2.12 (3)]{GGM:21}). 

Now, in a similar way, we can extend the Hamiltonian action of $\KKK$ on $\AAA\times\SSS$ to prove the proposition 
(see \cite[Lemma 4.3]{MIR:98}).
\end{proof}

Let $\GGG\,=\,\Omega^0(\EE(G))$ be the gauge group of $\EE$.
The real structures $\sigma_{X}$ and $\sigma_{G}$
together induce an involution $\sigma_{\GGG}$ on $\GGG$. Since $\sigma_G$ preserves $K$,
it follows that $\sigma_{\GGG}$ restricts to an involution $\sigma_{\KKK}$ on $\KKK$.
By abuse of notation, we denote also by $\sigma_{\GGG}$ the involution on $\GGG/\KKK$ and
the involution on $\Lie(\GGG).$ 

A triple $(\sigma_{\EE},\,A,\,\varphi)\,\in\, \TTT$ is called \textbf{simple} if there is 
no semisimple element $u\,\in\,\Lie(\GGG)$ such that
\begin{itemize}
\item $u$ is invariant under $\sigma_{\Lie(\GGG}$, and

\item $\left\langle \left\langle d\mu^{\alpha}(A,\varphi),\,\,u \right\rangle ,\,-\sqrt{-1} u\right\rangle\,=\,0$. 
\end{itemize}

\begin{The}\label{th:maintheo}
Take any $\alpha\,\in\, \zZ(\kK)$. Fix a simple $\ssigma$-real triple $(\sigma_{\EE},\,A,\,\varphi)$. 
There is an element $s\,\in\, \GGG/\KKK$ such that
\begin{enumerate}
\item $\sigma_\GGG (s)\,=\, s$, and

\item the function $\Psi^{\alpha}$ in \eqref{eq:integralmomentmap} attains its minimum at $s$.
\end{enumerate}
\end{The}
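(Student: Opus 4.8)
The plan is to prove the statement by the direct method in the calculus of variations, performed entirely within the subspace of $\sigma_\GGG$-invariant metrics, following the scheme of Mundet i Riera \cite{MIR:981,MIR:98} as adapted in \cite{GGM:21} and transported to the real setting as in \cite{BGMR:23}. Let $(\GGG/\KKK)^{\sigma_\GGG}$ denote the fixed-point set of the involution $\sigma_\GGG$ on $\GGG/\KKK$. Since $B$ satisfies $(d\sigma_G)^*B=B$, the map $\sigma_\GGG$ is an isometry of $\GGG/\KKK$ for the metric induced by $B$, so $(\GGG/\KKK)^{\sigma_\GGG}$ is a nonempty, totally geodesic subspace containing the base point determined by the fixed $\sigma_\EE$-compatible reduction $h$. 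Because the triple $(\sigma_\EE,A,\varphi)$ is $\sigma_\GGG$-invariant, the functional $\Psi^\alpha$ of \eqref{eq:integralmomentmap} is $\sigma_\GGG$-invariant; by the general theory it is convex along geodesic rays, and the slope of $\Psi^\alpha$ along the ray in a direction $s\in\Lie(\KKK)$ increases, as one moves to infinity, to the maximal weight $\lambda^\alpha((A,\varphi),s)$ of \eqref{eq:limite1}. I shall therefore produce a minimizer of $\Psi^\alpha$ restricted to $(\GGG/\KKK)^{\sigma_\GGG}$; any such minimizer satisfies condition $(1)$ automatically.

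The core of the argument, and its main obstacle, is an a priori $C^0$ estimate. By Proposition \ref{pro:pesosygrados1}, for a direction of the form $\sqrt{-1}\psi_{h,\sigma,s}$ coming from a holomorphic reduction $\sigma$ to a parabolic $P_s$ one has $\lambda^\alpha((A,\varphi),\sqrt{-1}\psi_{h,\sigma,s})=\deg E(\sigma,s)-B(s,\alpha)$, which is strictly positive by the $\alpha$-stability of $(E,\varphi,\sigma_E)$. The simplicity hypothesis on $(\sigma_\EE,A,\varphi)$ rules out precisely the degenerate $\sigma_\GGG$-invariant directions: there is no nonzero semisimple $\sigma_\GGG$-invariant $u\in\Lie(\GGG)$ with $\left\langle\left\langle d\mu^\alpha(A,\varphi),\,u\right\rangle,\,-\sqrt{-1}u\right\rangle=0$. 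Together these yield the main estimate: there are constants $C_1,C_2>0$ with
\begin{equation}\label{eq:mainestimate}
\|s\|_{L^\infty}\,\leq\,C_1\,\Psi^\alpha((A,\varphi),e^{\sqrt{-1}s})+C_2
\end{equation}
for every $\sigma_\GGG$-invariant $s$. I would establish \eqref{eq:mainestimate} by the standard blow-up argument: were it to fail, there would be $\sigma_\GGG$-invariant $s_n$ with $\|s_n\|_{L^\infty}\to\infty$ and $\Psi^\alpha((A,\varphi),e^{\sqrt{-1}s_n})/\|s_n\|_{L^\infty}\to 0$; rescaling $u_n=s_n/\|s_n\|_{L^\infty}$ and passing to a weak limit $u_\infty$ (which remains $\sigma_\GGG$-invariant, the invariant subspace being weakly closed), convexity would force $\lambda^\alpha((A,\varphi),u_\infty)\leq 0$ with $u_\infty\neq 0$ semisimple. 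This contradicts the positivity above whenever $u_\infty$ corresponds to a proper parabolic reduction, and in the borderline flat case it contradicts the simplicity hypothesis. The real content of this step is that the entire argument runs on $\sigma_\GGG$-invariant data, so the destabilizing object produced is $\sigma_\GGG$-invariant and is excluded by real simplicity rather than ordinary simplicity.

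Granting \eqref{eq:mainestimate}, the direct method yields the minimizer. Put $m=\inf\Psi^\alpha$ over $(\GGG/\KKK)^{\sigma_\GGG}$ and choose a minimizing sequence of $\sigma_\GGG$-invariant elements $s_n$. By \eqref{eq:mainestimate} the norms $\|s_n\|_{L^\infty}$ are bounded, and then the explicit form of $\Psi^\alpha$ furnishes a uniform bound in a Sobolev space ($L^2_1$, say), so after passing to a subsequence $s_n\rightharpoonup s_\infty$ weakly, with $s_\infty$ again $\sigma_\GGG$-invariant. As $\Psi^\alpha$ is convex it is weakly lower semicontinuous, whence $\Psi^\alpha(s_\infty)\leq\liminf\Psi^\alpha(s_n)=m$, so $s_\infty$ is a minimizer. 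Elliptic regularity applied to the resulting equation $\mu^\alpha(e^{\sqrt{-1}s_\infty}(A,\varphi))=0$ (see Remark \ref{rem:propfundintegralmoment}) bootstraps $s_\infty$ to a smooth section satisfying condition $(1)$.

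Finally, to see that $s_\infty$ realizes the minimum of $\Psi^\alpha$ on all of $\GGG/\KKK$, and not merely on the invariant subspace, observe that by Remark \ref{rem:propfundintegralmoment} the gradient of $\Psi^\alpha$ at $s_\infty$ is identified with $\mu^\alpha(e^{\sqrt{-1}s_\infty}(A,\varphi))\in\Omega^0(E_K(\kK))$, and since $\sigma_\GGG$ is an isometry fixing $s_\infty$ this gradient is $\sigma_\GGG$-invariant. Minimality within $(\GGG/\KKK)^{\sigma_\GGG}$ forces it to be orthogonal to every $\sigma_\GGG$-invariant direction; as the pairing induced by $B$ is nondegenerate on the invariant part of $\Lie(\KKK)$, the gradient vanishes. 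Hence $s_\infty$ is a critical point of $\Psi^\alpha$ on $\GGG/\KKK$, which by convexity is a global minimum, completing the proof.
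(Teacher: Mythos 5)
Your strategy is essentially the paper's: the same a priori $C^0$ estimate proved by the same blow-up argument, with Proposition \ref{pro:pesosygrados1} converting the limiting destabilizing direction into a parabolic reduction compatible with $\sigma_{\Ad(E)}$ that violates $\alpha$-stability, and with simplicity of the real triple excluding the flat directions. Where you differ is in how the reality condition is imposed on the minimizer. The paper takes an arbitrary minimizing sequence $\{s_i\}$ for $\Psi^{\alpha}$ on all of $\GGG/\KKK$ and replaces it by the symmetrization $\{(s_i+\sigma_{\GGG}(s_i))/2\}$, which is still minimizing because $\Psi^{\alpha}$ is $\sigma_{\GGG}$-invariant and convex along geodesics; the weak limit supplied by Proposition \ref{Pro:ctes} is then a global minimizer and $\sigma_{\GGG}$-invariant in one stroke. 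You instead run the direct method inside the fixed-point locus $(\GGG/\KKK)^{\sigma_{\GGG}}$ and then upgrade the restricted minimizer to a global one by noting that its gradient is $\sigma_{\GGG}$-invariant and orthogonal to every invariant direction, hence zero by positivity of the pairing, hence a global minimum by convexity. Both routes are correct; the symmetrization is shorter, while your version makes explicit the geometric fact that a critical point of an invariant convex functional on the totally geodesic fixed locus is a critical point of the ambient functional.

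One technical point you should repair: your displayed estimate $\sup|s|\,\leq\, C_{1}\Psi^{\alpha}((A,\varphi),e^{\sqrt{-1}s})+C_{2}$ is asserted for every $\sigma_{\GGG}$-invariant $s$, but in the Mundet i Riera scheme the passage from $\norm{s}_{L^{1}}$ to $\sup|s|$ requires the a priori bound $\norm{\mu^{\alpha}(e^{s}(A,\varphi))}_{L^{p}}\,\leq\, C$; that is, the estimate only holds on the bounded metric space $\Met_{2\,C}^{p}$. The paper therefore proves separately, via the linearization $L$ and the vanishing $\Ker(L)=0$ --- which is where simplicity of the real triple enters a second time, through the symmetrized element $(u+\sigma_{\Lie(\GGG)}(u))/2$ --- that a minimizer on $\Met_{2\,C}^{p}$ is already a minimizer on $\Met_{2}^{p}$. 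Your write-up uses simplicity only in the blow-up step and silently drops the restriction to $\Met_{2\,C}^{p}$; as stated the estimate is too strong. The fix is standard and does not change the architecture of your argument, but without it the direct method in your third paragraph is not justified.
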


Theorem \ref{th:maintheo} will be proved after proving Proposition \ref{Pro:ctes}.

Fix from now on $p\,>\,2$. Let $(\sigma_{\EE},\,A,\,\varphi)$ be a simple triple whose 
corresponding $\ssigma$-real Higgs pair is $\alpha$-stable.
We define $$\Met_{2}^{p}\,:=\,L_{2}^{p}(E(\sqrt{-1}\, \kK)$$
to be the completion of 
$\Omega^{0}(E(\sqrt{-1}\, \kK))$ with respect to the Sobolev norm 
$$\norm{s}_{L_{2}^{p}}\,=\,\norm{s}_{L^{p}}+\norm{d_{A}s}_{L^{p}}+\norm{\nabla d_{A}s}_{L^{p}}\, ,$$
where $d_{A}s$ is the covariant derivative of $s$ with respect to $A$, and
$$\nabla\,:\,\Omega^0(T^*X\otimes \ad(\EE))\,\longrightarrow\, 
\Omega^1(T^*X\otimes \ad(\EE))$$
is the tensor product
of the Levi--Civita connection and $d_{A}$.
Note that $\Omega^{0}(E(\sqrt{-1}\, \kK))$ is actually isomorphic to $\GGG/\KKK$ using the
exponential map. Let $C$ be a positive real number. Consider the bounded metric space
$$\Met_{2\,C}^{p}\,:=\,\{s\,\in\,\Met_{2}^{p}\,\mid\, \norm{\mu^{\alpha} (e^{s}(A,\varphi))}_{L^{p}}^{p}\,\leq\, C\}\, .$$

\begin{Pro}
If a metric minimizes $\Psi^{\alpha}$ in $\Met_{2\,C}^{p}$, then it also minimizes $\Psi^{\alpha}$ 
in $\Met_{2}^{p}.$
\end{Pro}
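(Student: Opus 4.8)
The plan is to reduce the statement to showing that a minimizer $s_{0}$ of $\Psi^{\alpha}$ over the bounded set $\Met_{2\,C}^{p}$ is a critical point of $\Psi^{\alpha}$ on the whole of $\Met_{2}^{p}$. The integral of the moment map is convex along the geodesics of the symmetric space $\GGG/\KKK\,\cong\,\Omega^{0}(E(\sqrt{-1}\kK))$ (the basic property of $\Psi^{\alpha}$, see \cite{MIR:981,MIR:98}), and for a geodesically convex function every critical point is an absolute minimum; hence once $s_{0}$ is known to be critical it is automatically a global minimizer. By Remark \ref{rem:propfundintegralmoment}, $s_{0}$ is a critical point of $\Psi^{\alpha}$ if and only if $\mu^{\alpha}(e^{s_{0}}(A,\varphi))\,=\,0$. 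Thus the whole proposition reduces to proving that the constrained minimizer $s_{0}$ satisfies $m_{0}\,:=\,\mu^{\alpha}(e^{s_{0}}(A,\varphi))\,=\,0$.

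I would argue this by contradiction, assuming $m_{0}\,\neq\,0$. Let $\gamma(t)$ be the geodesic in $\Met_{2}^{p}$ issuing from $s_{0}$ in the negative--gradient direction determined by the moment map, i.e. the direction for which the first--variation formula for the integral of the moment map gives $\frac{d}{dt}\big\vert_{t=0}\Psi^{\alpha}(\gamma(t))\,=\,-\norm{m_{0}}^{2}\,<\,0$. Then $\Psi^{\alpha}(\gamma(t))\,<\,\Psi^{\alpha}(s_{0})$ for all sufficiently small $t\,>\,0$, so to contradict the minimality of $s_{0}$ over $\Met_{2\,C}^{p}$ it suffices to check that $\gamma(t)\,\in\,\Met_{2\,C}^{p}$ for small $t\,>\,0$, that is, that the constraint $\norm{\mu^{\alpha}(e^{\gamma(t)}(A,\varphi))}_{L^{p}}^{p}\,\leq\,C$ persists. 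When the constraint is inactive at $s_{0}$, namely $\norm{m_{0}}_{L^{p}}^{p}\,<\,C$, this is immediate from the continuity of $t\,\mapsto\,\norm{\mu^{\alpha}(e^{\gamma(t)}(A,\varphi))}_{L^{p}}$, and the contradiction follows at once in this case.

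The delicate case, which I expect to be the main obstacle, is when $s_{0}$ lies on the boundary $\norm{m_{0}}_{L^{p}}^{p}\,=\,C$: here I must show that moving along the negative--gradient direction does not increase the $L^{p}$--norm of the moment map. The goal is the monotonicity estimate
$$
\left.\frac{d}{dt}\right\vert_{t=0}\norm{\mu^{\alpha}(e^{\gamma(t)}(A,\varphi))}_{L^{p}}^{p}\,\leq\,0\,,
$$
which, writing $D\mu^{\alpha}_{s_{0}}$ for the linearization of $s\,\mapsto\,\mu^{\alpha}(e^{s}(A,\varphi))$, amounts to $\int_{X}\lvert m_{0}\rvert^{p-2}\langle m_{0},\,D\mu^{\alpha}_{s_{0}}(m_{0})\rangle\,\geq\,0$. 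This is exactly where the explicit form $\mu^{\alpha}\,=\,\Lambda F+\mu(\varphi)-\sqrt{-1}\alpha$ enters: the Hermite--Einstein--Higgs structure makes $D\mu^{\alpha}_{s_{0}}$ a second--order elliptic operator whose quadratic form is controlled, and I would obtain the required positivity by adapting the second--variation computations of \cite{BGMR:23}, or equivalently by invoking a parabolic maximum principle for the gradient flow $\dot s\,=\,-\mu^{\alpha}(e^{s}(A,\varphi))$, along which $\lvert\mu^{\alpha}\rvert$ satisfies a heat--type differential inequality so that $t\,\mapsto\,\norm{\mu^{\alpha}(e^{\gamma(t)}(A,\varphi))}_{L^{p}}$ is non--increasing. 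Throughout one checks that $m_{0}$, and hence the whole construction, is compatible with the real structure (since $\mu^{\alpha}$ of a $\ssigma$--compatible configuration is $\sigma$--invariant), so the competitors $\gamma(t)$ remain in $\Met_{2}^{p}$. Granting this estimate we again contradict the minimality of $s_{0}$ on $\Met_{2\,C}^{p}$, forcing $m_{0}\,=\,0$ and completing the proof.
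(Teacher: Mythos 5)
Your overall framing --- reduce to showing that the constrained minimizer $s_{0}$ satisfies $\mu^{\alpha}(e^{s_{0}}(A,\varphi))=0$, note that the interior case is immediate, and isolate the active-constraint case $\norm{m_0}_{L^p}^p=C$ as the real difficulty --- is the right diagnosis, and it is essentially the content of the lemma the paper invokes. But your resolution of the boundary case has a genuine gap. First, the monotonicity estimate $\frac{d}{dt}\big\vert_{t=0}\norm{\mu^{\alpha}(e^{\gamma(t)}(A,\varphi))}_{L^{p}}^{p}\leq 0$ is not proved, only asserted via a second-variation or maximum-principle heuristic; and even granted, a non-strict inequality does not keep $\gamma(t)$ inside $\Met^{p}_{2\,C}$ for $t>0$: if the derivative vanishes (e.g.\ if $m_{0}$ lies in the kernel of the linearization $L(u)=\sqrt{-1}\,\partial_t\mu(e^{tu}(B,\Theta))\vert_{t=0}$), the constraint can be violated at second order and your contradiction evaporates. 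Second, and more tellingly, your argument never uses the standing hypothesis that the triple $(\sigma_{\EE},A,\varphi)$ is \emph{simple}. That hypothesis cannot be dispensable: without some non-degeneracy, a minimizer over the bounded set need not solve the equation, and simplicity is precisely what rules out the degenerate directions above.

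The paper's proof goes through \cite[Lemma 3.4.2]{bradlow}: a minimizer of $\Psi^{\alpha}$ on $\Met^{p}_{2\,C}$ satisfies $\mu^{\alpha}=0$ provided $\Ker(L)=0$ (invertibility of $L$ lets one deform $s_{0}$ so that the moment map, hence the constraint, strictly decreases while $\Psi^{\alpha}$ decreases). The whole proposition thus reduces to showing $\Ker(L)=0$, and this is exactly where simplicity enters: given $0\neq u\in\Ker(L)$, one symmetrizes to $\tfrac{u+\sigma_{\Lie(\GGG)}(u)}{2}\in\Ker(L)$, deduces from \cite[Equation 2.13]{MIR:981} that it is an infinitesimal automorphism of $(B,\Theta)$, and contradicts the simplicity of $(\sigma_{\EE},A,\varphi)$. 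To repair your argument you would need to either prove a strict version of your boundary estimate or, more realistically, replace the gradient-direction deformation by the implicit-function-theorem deformation of Bradlow's lemma, which requires --- and explains the role of --- the triviality of $\Ker(L)$.
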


\begin{proof} 
Suppose that $s\,\in\, \Met_{2\,C}^{p}$ is a minimum of $\Psi^{\alpha}$. Let $B\,=\,e^s(A)$ and $\Theta
\,=\,e^s(\varphi)$. We define the differential operator
$$L(u)\,:=\,\sqrt{-1}\,\frac{\partial}{\partial t}\mu(e^{tu}(B,\Theta))\big\vert_{t=0}\,\,,$$
for every $u\,\in\, L_{2}^{p}(E(\sqrt{-1}\kK))$.

By \cite[Lemma 3.4.2]{bradlow}, it suffices to prove that $\Ker(L)\,=\,0$, because if $s$ minimizes $\Psi^{\alpha}$ and 
$\Ker(L)\,=\,0$, then $\mu(B,\Theta)\,=\,0$, and by Remark \ref{rem:propfundintegralmoment}, one has that $s$ minimizes 
$\Psi^{\alpha}$ on $\Met_{2}^{p}.$
 
We prove that $\Ker (L)\,=\,0$, by contradiction. Let $0\,\neq\, u\,\in\, \Ker (L)$. Therefore,
the element $\frac{u+\sigma_{\Lie(\GGG)}(u)}{2}$ also lies in $\Ker(L)$. Then we have $$\left\langle 
-\sqrt{-1}\,L(\frac{u+\sigma_{\Lie(\GGG)}(u)}{2})\,, \,\frac{u+\sigma_{\Lie(\GGG)}(u)}{2} \right\rangle\,=\,0\, .$$ From 
\cite[Equation 2.13]{MIR:981} it follows that $\frac{u+\sigma_{\GGG}(u)}{2}$ leaves $(B,\,\Theta)$
invariant. Consequently, 
$(\sigma_{\EE},\,B,\,\Theta)$ is not simple and hence $(\sigma_\EE,\,A,\,\varphi)$ is not simple.
In view of this contradiction we conclude that $\Ker(L)\,=\,0$. 
\end{proof}

\begin{Pro}\label{Pro:ctes}
There exist positive constants $C_{1}$ and $C_{2}$ such that 
$$\sup|s|\,\leq\, C_{1}\Psi^{\alpha}((A,\varphi),e^{s})+C_2$$
for every $s\,\in\, \Met_{2\,C}^{p}$.
\end{Pro}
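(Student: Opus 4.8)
The plan is to derive the asserted bound by combining two estimates of different natures: a purely analytic $C^0$-bound of $\sup|s|$ by the $L^1$-norm on the sublevel set $\Met_{2\,C}^{p}$, and a coercivity estimate $\Psi^{\alpha}((A,\varphi),e^{s})\,\geq\, c\,\norm{s}_{L^1}-c'$ whose positive constant $c$ comes from the $\alpha$-stability of $(E,\varphi,\sigma_E)$. Feeding the second into the first yields $\sup|s|\,\leq\, C_1\,\Psi^{\alpha}((A,\varphi),e^{s})+C_2$ with $C_1=A_1/c$ and $C_2=A_1c'/c+A_2$, where $A_1,A_2$ are the constants of the $C^0$-bound.

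For the $C^0$-estimate I would argue as in \cite{bradlow}. A Weitzenb\"ock computation shows that $|s|$ is a subsolution of a Laplace-type inequality whose inhomogeneous term is pointwise controlled by $\Lambda F_{e^{s}(A)}+\mu(e^{s}(\varphi))$, and hence by $\mu^{\alpha}(e^{s}(A,\varphi))$ together with the fixed curvature $F_L$. On $\Met_{2\,C}^{p}$ the quantity $\norm{\mu^{\alpha}(e^{s}(A,\varphi))}_{L^{p}}$ is bounded by $C^{1/p}$, so the mean-value (Moser) inequality for subsolutions on the compact surface $(X,\omega)$, together with the Sobolev embedding available for $p>2$, gives $\sup|s|\,\leq\, A_1\norm{s}_{L^1}+A_2$ with $A_1,A_2$ independent of $s$. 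This step is insensitive to the real structure.

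The essential point is the coercivity estimate, and this is where stability enters. Arguing by contradiction, suppose there is a sequence $s_n\,\in\,\Met_{2\,C}^{p}$ with $\norm{s_n}_{L^1}\to\infty$ while $\Psi^{\alpha}((A,\varphi),e^{s_n})/\norm{s_n}_{L^1}\to 0$, and set $u_n\,:=\,s_n/\norm{s_n}_{L^1}$. The homogeneity of the maximal weight and the convexity of $\Psi^{\alpha}$ along geodesic rays give, for a fixed direction, $\Psi^{\alpha}((A,\varphi),e^{Lu})/L\to\lambda^{\alpha}((A,\varphi),u)$ as $L\to\infty$. Passing to a weakly $L^{2}_{1}$-convergent subsequence $u_n\rightharpoonup u_\infty\neq 0$, which is pointwise semisimple with eigenvalues in a fixed finite set and, since the $s_n$ are the $\sigma_{\GGG}$-invariant (real) metrics relevant to Theorem \ref{th:maintheo}, is itself $\sigma_{\GGG}$-invariant, the lower semicontinuity of the maximal weight under weak convergence \cite{MIR:98} yields $\lambda^{\alpha}((A,\varphi),u_\infty)\,\leq\,\liminf_n\Psi^{\alpha}((A,\varphi),e^{s_n})/\norm{s_n}_{L^1}\,=\,0$. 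The eigenbundle filtration of $u_\infty$ then assembles, via Definition \ref{def:section}, into a $\sigma_E$-compatible holomorphic reduction $\sigma$ of $E$ to a parabolic $P_{s_\infty}$ satisfying \eqref{eq:stabilitycondition}, and Proposition \ref{pro:pesosygrados1} identifies the limiting weight with $\deg E(\sigma,s_\infty)-B(s_\infty,\alpha)\,\leq\,0$. For $s_\infty$ non-central this contradicts the $\alpha$-stability of $(E,\varphi,\sigma_E)$, while the central part of $u_\infty$ contributes only a fixed topological constant to $\Psi^{\alpha}$ through the degree and is likewise incompatible with the assumed sublinear growth. This forces the coercivity estimate.

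The main obstacle is precisely this last step: guaranteeing that the weak limit $u_\infty$ is nonzero and that its eigenbundle filtration genuinely produces an honest $\sigma_E$-compatible parabolic reduction to which Proposition \ref{pro:pesosygrados1} and the real stability condition \eqref{eq:stabilitycondition} apply. Nonvanishing of $u_\infty$ follows from the normalization $\norm{u_n}_{L^1}=1$ together with the $C^0$-bound of the first step, and the regularity and semicontinuity inputs are the $\sigma_{\GGG}$-equivariant analogues of the corresponding facts in \cite{MIR:98} and \cite{MIR:981}; the equivariance is automatic because $(A,\varphi)$ is a real triple, so $\sigma_{\GGG}$ is an isometry fixing $(A,\varphi)$ and leaving both $\sup|s|$ and $\Psi^{\alpha}$ invariant.
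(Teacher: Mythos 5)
Your overall strategy coincides with the paper's: reduce the $C^{0}$ bound to an $L^{1}$ bound by the subsolution/Moser argument of \cite{MIR:98}, and establish the $L^{1}$ coercivity by contradiction, normalizing a violating sequence, extracting a weak limit $u_{\infty}\neq 0$ with $\lambda^{\alpha}((A,\varphi),u_{\infty})\leq 0$, converting its eigenbundle filtration into a parabolic reduction, and contradicting stability via Proposition \ref{pro:pesosygrados1}.

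There is, however, one genuine gap. The proposition quantifies over \emph{every} $s\in\Met_{2\,C}^{p}$, so the violating sequence $s_{n}$ produced by the contradiction hypothesis has no reason to consist of $\sigma_{\GGG}$-invariant elements; your assertion that the $s_{n}$ are ``the $\sigma_{\GGG}$-invariant (real) metrics relevant to Theorem \ref{th:maintheo}'' imports a restriction that is not in the statement. This matters precisely because the hypothesis available here is $\alpha$-stability of the \emph{real} pair $(E,\varphi,\sigma_{E})$, which only tests parabolic reductions satisfying the reality condition \eqref{eq:stabilitycondition} and is strictly weaker than $\alpha$-stability of the underlying pair. If $u_{\infty}$ is not $\sigma_{\GGG}$-invariant, the filtration $V(u_{\infty})$ produces a reduction to which real stability says nothing, and no contradiction results. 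The paper closes this by symmetrizing the limit: since $(A,\varphi)$ is a real triple and $B$ is $d\sigma_{G}$-compatible, the maximal weights are unchanged under $u\mapsto\sigma_{\GGG}(u)$, so $\lambda^{\alpha}((A,\varphi),\sigma_{\GGG}(u_{\infty}))\leq 0$ as well, and the average $u'_{\infty}=\tfrac{1}{2}\left(u_{\infty}+\sigma_{\GGG}(u_{\infty})\right)$ is a $\sigma_{\GGG}$-invariant element of non-positive weight whose filtration does yield a reduction with $\sigma_{\Ad(E)}(\Ad E_{P_{s}})=\Ad E_{P_{s}}$, to which real stability and Proposition \ref{pro:pesosygrados1} apply. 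You actually record the needed invariance of $\Psi^{\alpha}$ under $\sigma_{\GGG}$ in your closing sentence, but you use it only to justify equivariance of the analytic inputs rather than as the symmetrization step; with that step inserted in place of the unjustified invariance assumption, your argument matches the paper's.
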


\begin{proof}
The given condition that $p\,>\,2$ implies that $L^p_2\,\hookrightarrow\, C^0$,
which follows from the Sobolev embedding theorem, and it makes sense to consider $\sup|s|$.
Repeating the same arguments as in \cite[Section 6]{MIR:98}, it suffices to prove that
\begin{equation}\label{eq:desigualdad}
\norm{s}_{L^{1}}\,\leq\, C_{1}\Psi^{\alpha}((A,\varphi),e^{s})+C_2\,.
\end{equation}

To prove \eqref{eq:desigualdad} by contradiction,
suppose that $C_1,\,C_2$ satisfying (\ref{eq:desigualdad}) do not exist. Then it can
be shown that there is a sequence $\{u_i\}\,\in\,
\Met_{2\,C}^{p}$ converging weakly to some 
$$u_{\infty}\,\in\, \Met_{2\,C}^{p}\, ,$$ such that 
\begin{equation}\label{eq:contradicts}
 \lambda^{\alpha}((A,\varphi),u_{\infty})\,\leq\, 0\,.
\end{equation}
Indeed, following \cite[Lemma 2.5.4]{MIR:981}, if $C_1,\,C_2$ do not exist, then there is a sequence of real
numbers $\{C_j\}$ and a sequence $\{s_{j}\}$ of metrics in $\Met_{2\,B}^{p}$ such that
$$\lim_{j\rightarrow\infty}C_{j}\,=\,\infty\ \ \text{ and }\ \ \norm{s_{j}}_{L^{1}}
\,\geq\, C_{j}\Psi^{\alpha}((A,\varphi),e^{s_{j}})\, .$$
Let $l_j\,:=\,\norm{s_{j}}_{L^{1}}$ and $u_j\,:=\,\frac{s_j}{l_j}$; then $\norm{u_{j}}_{L^{1}}
\,=\,1$ and $\sup |u_j|\,\leq\, C_{j}\,.$
 
Passing to a subsequence if necessary, we can suppose that $\{u_j\}$ converge to an $u_{\infty}\,,$
weakly on $L_{1}^{2}(E(\sqrt{-1}\kK))$, and $\lambda^{\alpha}((A,\varphi),u_{\infty})\,\leq\, 0.$ From
the definition of the integral of the moment map we have the following:
$$ 
\frac{1}{C_{j}}\,\geq\,\frac{\Psi^{\alpha}((A,\varphi),e^{s_{j}})}{\norm{s_j}}\,\geq\,
\frac{l_{j}-t}{l_j}\lambda^{\alpha}_{t}((A,\varphi),-\sqrt{-1} u_j)+\frac{1}{l_j}\int_{0}^{t}
\lambda^{\alpha}_{t}((A,\varphi),-\sqrt{-1} u_j)dt
$$
$$=\, \frac{l_{j}-t}{l_j}(\lambda^{\alpha}_{t}(A,-\sqrt{-1} u_j)+\lambda^{\alpha}_{t}(\varphi,-\sqrt{-1} u_j))
$$
$$
+\frac{1}{l_j}\int_{0}^{t}(\lambda^{\alpha}_{l}(A,-\sqrt{-1} u_j)+\lambda^{\alpha}_{l}(\varphi,-\sqrt{-1} u_j))dl\, .
$$
These inequalities together imply that $\norm{\overline{\partial}_{A}(u_j)}$ is bounded, so 
there is a subsequence $\{u_j\}$ converging to $u_{\infty}.$ In addition, we have
$$\lambda^{\alpha}((A,\varphi),u_{\infty})\,\,\leq\,\, \lim_{t\rightarrow 
\infty}\lambda^{\alpha}_{t}((A,\varphi),-\sqrt{-1} u_j) \,\leq\, 0\,.$$ The subsequence 
$\{\sigma_{\GGG}(u_i)\}\,\in\, \Met_{2\,C}^{p}$ converges weakly to 
$\sigma_{\GGG}u_{\infty}\,\in \,\Met_{2\,C}^{p},$ and
\begin{equation}
 \lambda^{\alpha}((A,\varphi),\,\sigma_{\GGG}(u_{\infty}))\,\leq\, 0\,.
\end{equation}
The partial sums of subsequences $\{\frac{u_i+\sigma_{\GGG}(u_i)}{2}\}$ converge to $$u'_{\infty}
\,=\,\frac{u'_{\infty}+\sigma_{\GGG}(u'_{\infty})}{2}\,\in\, \Met_{2\,C}^{p}$$
and this element is invariant under $\sigma_{\GGG}$.
Then $\rho(u'_{\infty})$ has real constant eigenvalues. The filtration $V(u'_{\infty})$ induces
a holomorphic reduction of structure group $\sigma$ from $G$ to $P$ 
such that $E_{P}\,=\,\sigma_{E}(E_{P}),$ and therefore
 $\sigma_{\Ad(E)}(\Ad E_{P_{s}})\,=\,\Ad E_{P_{s}}$. Since by hypothesis 
$(E,\varphi)$ is
$\alpha$-stable, it follows that $$\deg E(\sigma,s)+B(\alpha,s)\,>\,0\, .$$
In view of Proposition \ref{pro:pesosygrados1}, this inequality contradicts (\ref{eq:contradicts}), completing the proof.
\end{proof}

\begin{proof}[{Proof of Theorem \ref{th:maintheo}}]
To prove Theorem \ref{th:maintheo}, let $\{s_i\}$ be a minimizing sequence for $\Psi^{\alpha}$, then
$$\{s'_i\}\,=\,\{\frac{s_i+\sigma_{\GGG}(s_i)}{2}\}$$ is a $\sigma_{\GGG}$-invariant minimizing sequence for
$\Psi^{\alpha}$. From Proposition \ref{Pro:ctes}, after passing to a
suitable subsequence, we can suppose that $\{s'_i\}$ converges weakly to some
$s'$, invariant under $\sigma_{\GGG}$, that
minimize $\Psi^{\alpha}$.
\end{proof}

\begin{The}\label{th:stablecase}
Let $(E,\,\varphi,\, \sigma_E)$ be a $\alpha$-stable $\ssigma$-real $L$-twisted $G$-Higgs pair.
Then there is a $\sigma_E$-compatible $C^\infty$ reduction of structure group
$h$ from $G$ to $K$ satisfying \eqref{eq:Einstein}.
\end{The}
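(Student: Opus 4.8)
The plan is to recast the problem in the gauge-theoretic language of Subsection~\ref{Subsection3.6} and then assemble the analytic results proved there. First I would fix a $\sigma_E$-compatible $C^\infty$ reduction $h_0$ of $E$ from $G$ to $K$, which exists by Proposition~\ref{max} together with the real-structure theory of Section~\ref{Section2}, and let $A_0$ be the associated Chern connection, so that $(\sigma_\EE,\,A_0,\,\varphi)\,\in\,\TTT$ is the triple attached to $(E,\,\varphi,\,\sigma_E)$. Under the identification between $\sigma_E$-compatible reductions and complex gauge transformations $e^{s}$ with $s\,\in\,\Omega^0(E(\sqrt{-1}\,\kK))$, the reduction $h\,=\,e^{s}\cdot h_0$ satisfies the Hermite--Einstein--Higgs equation~\eqref{eq:Einstein} precisely when $\mu^\alpha(e^{s}(A_0,\,\varphi))\,=\,0$, where $\mu^\alpha$ is the moment map in~\eqref{eq:momentmap}. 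It therefore suffices to produce a $\sigma_\GGG$-invariant $s$ at which $\Psi^\alpha$ is critical: by Remark~\ref{rem:propfundintegralmoment} such a critical point yields a solution of $\mu^\alpha\,=\,0$, and its $\sigma_\GGG$-invariance guarantees that the reduction $h$ it determines is $\sigma_E$-compatible.

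The next step is to check that the triple $(\sigma_\EE,\,A_0,\,\varphi)$ is simple. A $\sigma_{\Lie(\GGG)}$-invariant semisimple $u$ for which the pairing $\langle\langle d\mu^\alpha(A_0,\,\varphi),\,u\rangle,\,-\sqrt{-1}u\rangle$ vanishes is, by the standard K\"ahler identity equating this pairing with the squared norm of the infinitesimal action of $u$, exactly a real infinitesimal automorphism of $(E,\,\varphi,\,\sigma_E)$. Proposition~\ref{pro:aut} then shows that $\alpha$-stability forces every such automorphism to lie in $H^{0}(X,\,E(\mathfrak z))^{\sigma_{\ad(E)}}$, that is, to be central; hence the triple is simple in the required sense. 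With simplicity established, Theorem~\ref{th:maintheo} applies directly and produces an $s\,\in\,\GGG/\KKK$ with $\sigma_\GGG(s)\,=\,s$ at which $\Psi^\alpha$ attains its minimum.

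It remains to upgrade this minimizer to a genuine solution. Working with $p\,>\,2$ fixed as in the setup, I would first place $s$ in the bounded set $\Met_{2\,C}^{p}$ and apply the preceding proposition, which asserts that a minimizer of $\Psi^\alpha$ on $\Met_{2\,C}^{p}$ is automatically a minimizer on all of $\Met_{2}^{p}$; the proof of that statement rests on $\Ker(L)\,=\,0$, which is precisely the simplicity just verified. A minimizer on $\Met_{2}^{p}$ is a critical point of $\Psi^\alpha$, so Remark~\ref{rem:propfundintegralmoment} gives $\mu^\alpha(e^{s}(A_0,\,\varphi))\,=\,0$. Elliptic regularity applied to the (nonlinear) equation $\mu^\alpha\,=\,0$ then bootstraps the a priori $L_2^p$-solution $s$ to a smooth one, so that $h\,=\,e^{s}\cdot h_0$ is a bona fide $C^\infty$ reduction; its $\sigma_E$-compatibility is inherited from $\sigma_\GGG(s)\,=\,s$, completing the construction.

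The substantive analytic content is not in this assembly but in the two ingredients I am quoting. The a priori estimate of Proposition~\ref{Pro:ctes}, whose proof feeds the $\alpha$-stability inequality through Proposition~\ref{pro:pesosygrados1} by identifying $\deg E(\sigma,\,s)-B(s,\,\alpha)$ with a maximal weight $\lambda^\alpha$, is what forces the minimizing sequence to remain bounded and hence to converge weakly. The main obstacle I anticipate is the genuinely \emph{real} point rather than a computational one: ensuring that imposing the constraint $\sigma_\GGG(s)\,=\,s$ does not destroy the minimizing property. This is handled by the symmetrization in the proof of Theorem~\ref{th:maintheo}, where a minimizing sequence $\{s_i\}$ is replaced by $\{(s_i+\sigma_\GGG(s_i))/2\}$; the invariance of $\Psi^\alpha$ under $\sigma_\GGG$ together with its convexity along the relevant one-parameter families shows that the symmetrized sequence is still minimizing, and Proposition~\ref{Pro:ctes} guarantees that its weak limit, which is $\sigma_\GGG$-invariant, still realizes the minimum.
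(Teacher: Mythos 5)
Your proposal is correct and follows essentially the same route as the paper's proof: fix a $\sigma_E$-compatible reduction to get a triple in $\TTT$, use Proposition~\ref{pro:aut} to deduce simplicity from $\alpha$-stability, apply Theorem~\ref{th:maintheo} to obtain a $\sigma_\GGG$-invariant minimizer of $\Psi^\alpha$, and conclude via Remark~\ref{rem:propfundintegralmoment}. You simply make explicit some steps (the passage from $\Met_{2\,C}^{p}$ to $\Met_{2}^{p}$ and the regularity bootstrap) that the paper leaves implicit in its citation of the surrounding machinery.
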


\begin{proof}
Fix a $\sigma_E$-compatible $C^\infty$ reduction of structure group of $E$ from $G$ to $K$. Let
$$(\sigma_{\EE},\, A,\,\varphi)\,\in\,\TTT$$ be the
triple associated to $(E,\,\varphi,\, \sigma_E)$. The existence of a $\sigma_E$-compatible $C^\infty$ reduction
of structure group $h$ of $E$ from $G$ to $K$ satisfying \eqref{eq:Einstein} is equivalent to the existence of
a $\sigma_{\EE}$-compatible pair $(A',\,\varphi')$ satisfying the equation
\begin{equation}\label{eq:condition}
\mu^{\alpha}(A',\varphi')\,=\,0\, ,
\end{equation}
where $\mu^{\alpha}$ is defined in (\ref{eq:momentmap}).

From Remark \ref{rem:propfundintegralmoment} we know that fixing a $\sigma_E$-compatible pair $(A,\,\varphi)$ in
equation \eqref{eq:integralmomentmap},
$$e^{\sqrt{-1} s}\,\in\, \Hh$$ is a critical point of $\Psi^{\alpha}$ if and only if $e^{\sqrt{-1} s}(A,\,\varphi)$ is a solution of (\ref{eq:condition}).

Finally, since $(E,\,\varphi,\, \sigma_E)$ is $\alpha$-stable, it follows that $(\sigma_{\EE},\, A,\,\varphi)$
is simple (see Proposition \ref{pro:aut} and \cite[Definition 3.8]{MIR:98}), and
hence we can apply Theorem \ref{th:maintheo} to complete the proof.
\end{proof}

\subsection{Polystability implies the existence of a solution}

In this section, and in the next one we prove Theorem \ref{th:Hitchin--Kobayashi}.

Let $(E,\,\varphi, \,\sigma_E)$ be a $\alpha$-polystable $\ssigma$-real $L$-twisted Higgs pair. If $(E,\,\varphi, \,\sigma_E)$ is $\alpha$-stable then it admits a $\sigma_{E}$-compatible Hermite-Einstein-Higgs reduction (see Theorem \ref{th:stablecase}). If $(E,\,\varphi, \,\sigma_E)$ is not $\alpha$-stable, by the Jordan-H\"older reduction (Theorem \ref{th:JHreduction}) we obtain a $(\sigma_{X},\sigma_{G}|_{G'},c,\sigma_{L}, \sigma_{\VV}|_{\VV'},\pm)$-real Higgs pair $(E',\varphi',\sigma_{E}|_{E'})$ that is $\alpha$-stable, then, by the previous case, it admits a $\sigma_{E'}$-compatible reduction of structure of $(E'_{K'},\varphi'_{K'})$ from $(G',\rho')$ to $(K',\rho|K')$ satisfying (\ref{eq:Einstein}).
This reduction defines another reduction $(E_K,\varphi_K)$ of $(E,\varphi)$ from $(G,\rho)$ to $(K,\rho|K)$, that, with respect to the structure group, is given by
$$E_K\,=\,E_{K'}\times_{K'}K\,=\,E'_{K'}\times_{K'}K$$
and with respect to the Higgs field, $\varphi_K$ is the image of $\varphi'_{K'}$ under the homomorphism 
$$H^0(X,E'_{K}(\VV'_{K})\otimes L)\,\longrightarrow\,
H^0(X,E_{K}(\VV_{K})\otimes L)$$
induced by the injection $\xymatrix{E'_{K'}(\VV_{K'})\ar@{^{(}->}[r] & E_{K}(\VV_{K})},$ where $\VV_K'$ and $\VV_K$ are complex vector spaces such that $\rho(K')=\GL(\VV_{K'})$ and $\rho(K)=\GL(\VV_{K})$, respectively.
Since the reduction $(E'_{K},\varphi'_{K'})$ is $\sigma_{E'}$-compatible and it is Hermite Einstein Higgs, then one can see that the reduction $(E_K,\varphi_K)$ is also $\sigma_E$-compatible and it is Hermite-Einstein-Higgs, in other
words, it satisfies Equation (\ref{eq:Einstein}). 

\subsection{Existence of a solution implies polystability}\label{Subsection3.7}

\begin{Pro}[{Existence of a solution implies semistability}]\label{prop.es}
Assume that there exist a $\sigma_{E}$-compatible Hermite--Einstein--Higgs reduction $h$ of a 
$\ssigma$-real $L$-twisted Higgs pair $(E,\,\varphi,\, \sigma_{E})$. Then
$(E,\,\varphi,\, \sigma_{E})$ is $\alpha$-semistable.
\end{Pro}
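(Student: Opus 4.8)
The plan is to recognize the Hermite--Einstein--Higgs equation as the vanishing of the moment map, and then combine the monotonicity of the maximal weight function with the identity recorded in Proposition~\ref{pro:pesosygrados1}. Concretely, I would let $(\sigma_{\EE},\,A,\,\varphi)\,\in\,\TTT$ be the triple associated with the reduction $h$, where $A$ is the Chern connection on $E_K$ determined by $h$ and the holomorphic structure of $E$. Comparing the Hermite--Einstein--Higgs equation \eqref{eq:Einstein} with the definition of the moment map in \eqref{eq:momentmap}, the hypothesis that $h$ is Hermite--Einstein--Higgs says precisely that
\[
\mu^{\alpha}(A,\,\varphi)\,=\,\Lambda F_A+\mu(\varphi)-\sqrt{-1}\,\alpha\,=\,0\,.
\]

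Next I would fix any $s\,\in\,\sqrt{-1}\,\kK$ together with a holomorphic reduction $\sigma$ of the structure group of $E$ to $P_s$ satisfying the reality condition \eqref{eq:stabilitycondition} and $\varphi\,\in\,H^{0}(X,\,E_{P_s}(\VV_s)\otimes L)$; these are exactly the reductions over which $\alpha$-semistability must be tested. By Proposition~\ref{pro:pesosygrados1},
\[
\deg E(\sigma,\,s)-B(s,\,\alpha)\,=\,\lambda^{\alpha}\!\big((A,\,\varphi),\,\sqrt{-1}\,\psi_{h,\sigma,s}\big)\,,
\]
so it suffices to show that this maximal weight is non-negative. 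The key step is the convexity of the integral of the moment map \eqref{eq:integralmomentmap}: for fixed $(A,\,\varphi)$ and any $u\,\in\,\Lie(\GGG)$, the function $t\,\mapsto\,\lambda^{\alpha}_{t}((A,\,\varphi),\,u)$ in \eqref{eq:limite} is non-decreasing in $t$, which is the fundamental monotonicity already used in the preceding subsections (cf.\ \cite{MIR:98,MIR:981}). Taking $u\,=\,\sqrt{-1}\,\psi_{h,\sigma,s}$ and passing to the limit \eqref{eq:limite1} then gives
\[
\lambda^{\alpha}\!\big((A,\,\varphi),\,u\big)\,=\,\lim_{t\to\infty}\lambda^{\alpha}_{t}\big((A,\,\varphi),\,u\big)\,\geq\,\lambda^{\alpha}_{0}\big((A,\,\varphi),\,u\big)\,=\,\big\langle\mu^{\alpha}(A,\,\varphi),\,u\big\rangle\,=\,0\,,
\]
the last equality using $\mu^{\alpha}(A,\,\varphi)=0$. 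Combining this with the displayed identity from Proposition~\ref{pro:pesosygrados1} yields $\deg E(\sigma,\,s)-B(s,\,\alpha)\,\geq\,0$ for every admissible pair $(\sigma,\,s)$, which is exactly $\alpha$-semistability.

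I expect no genuine obstacle here: the real structure enters only by restricting the reductions to be tested to those satisfying \eqref{eq:stabilitycondition}, which can only make the semistability inequality easier to verify. Indeed, running the same argument over \emph{all} reductions shows that the underlying Higgs pair $(E,\,\varphi)$ is $\alpha$-semistable, whence $(E,\,\varphi,\,\sigma_E)$ is $\alpha$-semistable by Remark~\ref{rem:weaker}; so in fact the statement reduces to the non-real case of \cite{GGM:21}. The only input requiring care is the monotonicity of $\lambda^{\alpha}_t$ in $t$, but this is precisely the property established in \cite{MIR:981} and invoked throughout Section~\ref{Subsection3.6}, so the proof is essentially a direct assembly of Proposition~\ref{pro:pesosygrados1} and this convexity.
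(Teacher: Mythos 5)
Your proposal is correct and follows essentially the same route as the paper's proof: both rest on the identity of Proposition \ref{pro:pesosygrados1}, the vanishing of $\lambda^{\alpha}_{0}$ coming from the Hermite--Einstein--Higgs equation, and the monotonicity of $t\mapsto\lambda^{\alpha}_{t}$ (the paper merely phrases it as a contradiction, while you argue directly). Your closing observation that the argument in fact yields semistability of the underlying pair, reducing to the non-real case via Remark \ref{rem:weaker}, is a valid and slightly cleaner packaging of the same content.
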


\begin{proof}
Suppose that $(E,\,\varphi,\, \sigma_{E})$ is not $\alpha$-semistable. Then, there is
a holomorphic reduction $\sigma$ of structure group of $E$ from $G$ to $P_s$
such that $\sigma_{\Ad(E)}(\Ad E_{P_{s}})\,=\,\Ad E_{P_{s}}$
and 
\begin{equation}\label{eq:nonestability}
 \deg E(s,\sigma)-B(s,\alpha)\,<\,0\,.
\end{equation}From Proposition \ref{pro:pesosygrados1}, one has that 
\begin{equation}\label{eq:contradict}
 \lambda^{\alpha}(\,(A,\varphi)\,,\sqrt{-1} \psi_{h,\sigma,s}\,)\,<\,0\,.
\end{equation}
By hypothesis $h$ is Hermite-Einstein-Higgs, then $\lambda_{0}^{\alpha}(\,(A,\,\varphi)\,,\sqrt{-1} \epsilon_{h,\sigma,s}\,)\,=\,0$. (See Equation \ref{eq:limite1}). Since $\lambda_{t}^{\alpha}(\,(A,\varphi)\,,\sqrt{-1} \psi_{h,\sigma,s}\,)$ is a non decreasing sequence, it follows that $$\lambda^{\alpha}(\,(A,\varphi)\,,\sqrt{-1} \psi_{h,\sigma,s}\,)
\,\geq\, 0$$ and this contradicts \eqref{eq:contradict}.
\end{proof}

\begin{Pro}[{Existence of solutions implies polystability}]
Assume that there exist a $\sigma_{E}$-compatible Hermite--Einstein--Higgs reduction $h$ of a
$\ssigma$-real $L$-twisted Higgs pair $(E,\,\varphi,\, \sigma_{E})$. Then
$(E,\,\varphi,\, \sigma_{E})$ is $\alpha$-polystable. 
\end{Pro}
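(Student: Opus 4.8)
The plan is to build on Proposition \ref{prop.es}, which already yields $\alpha$-semistability, and to verify the remaining splitting clause in the definition of $\alpha$-polystability. So first I would fix a holomorphic reduction $\sigma$ of the structure group of $E$ from $G$ to a parabolic subgroup $P_s$ that satisfies the reality condition \eqref{eq:stabilitycondition}, together with $\varphi\,\in\, H^{0}(X,\,E_{P_s}(\VV_s)\otimes L)$ and $\deg E(s,\sigma)-B(s,\alpha)\,=\,0$; these are exactly the reductions for which polystability demands a $\sigma_E$-compatible reduction to the Levi subgroup $L_s$. Let $(\sigma_\EE,\,A,\,\varphi)\,\in\,\TTT$ be the triple attached to the Hermite--Einstein--Higgs reduction $h$, and let $\psi_{h,\sigma,s}\,\in\,\Omega^{0}(E_h(\sqrt{-1}\kK))$ be the section of Definition \ref{def:section}.

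The analytic heart of the argument is to promote $\eta\,:=\,\sqrt{-1}\psi_{h,\sigma,s}$ to a holomorphic section. Since $h$ satisfies \eqref{eq:Einstein} we have $\mu^{\alpha}(A,\varphi)\,=\,0$, so $\lambda^{\alpha}_{0}((A,\varphi),\sqrt{-1}\psi_{h,\sigma,s})\,=\,\langle\mu^{\alpha}(A,\varphi),\,\sqrt{-1}\psi_{h,\sigma,s}\rangle\,=\,0$ by \eqref{eq:limite}. On the other hand, Proposition \ref{pro:pesosygrados1} identifies the maximal weight $\lambda^{\alpha}((A,\varphi),\sqrt{-1}\psi_{h,\sigma,s})$ with $\deg E(s,\sigma)-B(s,\alpha)\,=\,0$. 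Because $t\,\mapsto\,\lambda^{\alpha}_t$ is non-decreasing (as used in the proof of Proposition \ref{prop.es}) and vanishes at both $t\,=\,0$ and $t\,=\,\infty$, it vanishes identically. Following the corresponding step of \cite[Section 2.11]{GGM:21}, the vanishing of $\lambda^{\alpha}_t$ along the whole geodesic forces $\overline{\partial}_A\eta\,=\,0$, so $\eta$ is a holomorphic, pointwise-semisimple section of $\ad(E)$. Feeding $\eta$ into the centralizer construction of \eqref{cs2} then produces a holomorphic reduction of structure group $E_{L_s}\,\subset\, E_{P_s}$ to the Levi subgroup $L_s$, and the zero weight together with $\varphi\,\in\, H^{0}(X,\,E_{P_s}(\VV_s)\otimes L)$ upgrades this to $\varphi\,\in\, H^{0}(X,\,E_{L_s}(\VV^0_s)\otimes L)$ (so in particular $\eta\,\in\,\aut(E,\varphi)$).

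The main, and only genuinely new, obstacle compared with the non-real case is to check that this Levi reduction is $\sigma_E$-compatible, i.e. $\sigma_E(E_{L_s})\,=\,E_{L_s}$. For this I would observe that $\eta$ is fixed by the involution $\sigma_{\aut}$ of \eqref{sa}: the section $\psi_{h,\sigma,s}$ is built in Definition \ref{def:section} purely from the $\sigma_E$-compatible metric $h$ and the reduction $\sigma$ satisfying \eqref{eq:stabilitycondition}, and the uniqueness in that construction forces $\sigma_{\aut}(\eta)\,=\,\eta$. As verified in the paragraph following \eqref{cs2}, for such a $\sigma_{\aut}$-invariant $\eta$ one may choose the defining element $s_0\,\in\,\gG$ with $d\sigma_G(s_0)\,=\,s_0$, whence the sub-group scheme $C(s)\,\subset\,\Ad(E)$ is preserved by $\sigma_{\Ad(E)}$ and hence $E_{L_s}$ is preserved by $\sigma_E$. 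Combined with the semistability of Proposition \ref{prop.es}, this verifies every clause of $\alpha$-polystability and completes the proof.
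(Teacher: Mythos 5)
Your analytic core is exactly the paper's: you use that $\lambda^{\alpha}_{0}((A,\varphi),\sqrt{-1}\psi_{h,\sigma,s})=0$ because $h$ solves \eqref{eq:Einstein}, that the limit $\lambda^{\alpha}((A,\varphi),\sqrt{-1}\psi_{h,\sigma,s})$ equals $\deg E(s,\sigma)-B(s,\alpha)=0$ by Proposition \ref{pro:pesosygrados1}, and monotonicity in $t$ to force $\lambda^{\alpha}_{t}\equiv 0$, whence the filtration $V(\psi_{h,\sigma,s})$ yields a holomorphic Levi reduction $E_{L_s}\subset E_{P_s}$ with $\varphi\in H^{0}(X, E_{L_s}(\VV^{0}_{s})\otimes L)$. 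The gap is in the reality step. You assert that the uniqueness in Definition \ref{def:section} forces $\sigma_{\aut}(\sqrt{-1}\psi_{h,\sigma,s})=\sqrt{-1}\psi_{h,\sigma,s}$, but this is false in general. The hypothesis \eqref{eq:stabilitycondition} constrains only the adjoint group scheme $\Ad(E_{P_s})$; it does not make $E_{P_s}$ itself, nor the parabolic $P_s$, invariant. The involution $\sigma_E$ carries $E_{P_s}$ to a holomorphic reduction to the \emph{different} parabolic $P_{d\sigma_G(s)}$, and the transported section is the section $\psi'_{h,\sigma,\sigma_{G}s}$, with values conjugate to $d\sigma_G(s)$, rather than $\psi_{h,\sigma,s}$. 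Unless $d\sigma_G(s)=s$ --- which is not assumed, and cannot be assumed: as the remark following the definition of stability points out, for the compact real form there are no $\sigma_G$-invariant proper parabolic subgroups at all --- the section $\eta=\sqrt{-1}\psi_{h,\sigma,s}$ is not $\sigma_{\aut}$-fixed, and the centralizer construction of \eqref{cs2}, which requires choosing $s_0$ with $d\sigma_G(s_0)=s_0$, does not apply to it. You are implicitly assuming the very invariance that has to be established.

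The paper avoids this by working with both sections simultaneously: it defines $\xi'$ and $\psi'$ through the commutative diagrams \eqref{con:diagram1} and \eqref{con:diagram2}, shows that $e^{t\psi_{h,\sigma,s}}$ fixes $A$ and $e^{t\psi'_{h,\sigma,\sigma_{G}s}}$ fixes $\sigma_{\AAA}A$ (using that $h$ is $\sigma_E$-compatible, so $\sigma_E(E_h)=E_h$), and concludes that the two filtrations induce Levi reductions of $E_{P_s}$ and of $\sigma_E(E_{P_s})$ that are exchanged by $\sigma_E$, recorded as $\sigma_{\Ad(E)}(\Ad(E_{L_s}))=\Ad(E_{L_s})$. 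To repair your argument you should replace the claimed $\sigma_{\aut}$-invariance of $\eta$ by this two-section comparison, matching the reality condition actually demanded of the Levi reduction to the one on adjoint bundles used throughout the stability definition.
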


\begin{proof}
Let $h$ be a $\sigma_{E}$-compatible Hermite--Einstein--Higgs reduction. Take
$s\,\in\, \sqrt{-1}\kK$, and let $\sigma$ be any reduction of structure group from $G$
to $P_{s}$, such that $\sigma_{\Ad(E)}(\Ad E_{P_{s}})\,=\,\Ad E_{P_{s}}$. Since $h$ is
$\sigma_{E}$-compatible, $$\sigma_{E}(E_{h})\,=\,E_{h}\, ,$$ where $E_{h}$ is the reduced 
$K$-bundle. We define $\xi'$ as the map such that the following diagram
\begin{equation}\label{con:diagram1}
 \xymatrix{
 E_{h}\ar[d]_{\sigma_{E}}\ar[rr]^{\xi} && G/P_{s}\ar[d]_{\sigma_{G}}\\
 E_{h}\ar[rr]^{\xi'} && G/P_{d\sigma_{G}(s)}\,,}
\end{equation} commutes, where $\xi$ is defined in (\ref{def:sectionxi}). Analogously, we define $\psi'$ 
\begin{equation}\label{con:diagram2}
 \xymatrix{
 E_{h}\ar[d]_{\sigma_{E}}\ar[rr]^{\psi} && \sqrt{-1}\kK \ar[d]_{d\sigma_{G}}\\
 E_{h}\ar[rr]^{\psi'} && \sqrt{-1}\kK\,,}
\end{equation} where $\psi$ is defined in (\ref{def:sectionpsi}).
The maps $\psi$ and $\psi'$ define sections $\psi_{h,\sigma,s}$ and $\psi'_{h,\sigma,\sigma_{G}s}$ in $E_{h}(\sqrt{-1}\kK)$ such that 
the associated reductive filtrations $V(\psi_{h,\sigma,s})$ and $V(\psi'_{h,\sigma,\sigma_{G}s})$ (see equation (\ref{eq:filtration1})) are related in the following way: they define 
reductions $E_{P}$ and $\sigma_{E}(E_{P})$
that satisfy $\sigma_{\Ad(E)}(\Ad E_{P_{s}})\,=\,\Ad E_{P_{s}}$.

The $\ssigma$-real Higgs pair $(E,\varphi, \sigma_{E})$ is $\alpha$-semistable by Proposition \ref{prop.es}. 
Let $\sigma$ be a reduction of
structure group of $E$ from $G$ to $P_s$ such that $\sigma_{\Ad(E)}(\Ad E_{P_{s}})\,=\,\Ad E_{P_{s}}$
as well as
\begin{equation}\label{eq:nonestability2}
 \deg E(s,\sigma)-B(s,\alpha)\,=\,0\,.
\end{equation} Let $A$ be the connection on $E_h$ that corresponds to the holomorphic structure on $E$.
From Proposition \ref{pro:pesosygrados1} and \eqref{eq:nonestability2}, it follows that
\begin{equation}
 \lambda^{\alpha}(\,(A,\varphi)\,,\sqrt{-1} \psi_{h,\sigma,s}\,)=0\,.
\end{equation}

Since $h$ is an Hermite-Einstein-Higgs reduction, then $\lambda_{0}^{\alpha}(\,(A,\varphi)\,,\sqrt{-1} 
\psi_{h,\sigma,s}\,)\,=\,0$ and since $\lambda_{t}^{\alpha}(\,(A,\varphi)\,,\sqrt{-1} \psi_{h,\sigma,s}\,)$ is a 
non-decreasing sequence, then $\lambda_{t}^{\alpha}(\,(A,\varphi)\,,\sqrt{-1} \psi_{h,\sigma,s}\,)\,=\,0,$ for all 
$t.$ This implies that $e^{t\psi_{h,\sigma,s}}$ fix $A$, for any $t$. Therefore, the filtration 
$V(\psi_{h,\sigma,s})$ induces a reduction of the structure group of $E_{P_s}$ from $P_s$ to $L_s$. Analogously, 
$e^{t\psi'_{h,\sigma,\sigma_{G}s}}$ fixes $\sigma_{\AAA}A$, for any $t$, where $\sigma_{\AAA}$ is the real structure 
induced in the space of connections by $\sigma_{E}$ and $\sigma_{X}.$ The filtration 
$V(\psi'_{h,\sigma,\sigma_{G}s})$ induce a reductions of the structure group of $\sigma_{E}(E_{P_s})$ from 
$\sigma_{G}P_s$ to $\sigma_{G}L_s.$ Both reductions are related by
$$\sigma_{\Ad(E)}(\Ad (E_{L_s}))\,=\,\Ad (E_{L_s})\,.$$
The element $e^{t\epsilon_{h,\sigma,s}}$ fixes also $\varphi$ for any $t$. This implies that 
$\varphi\,\in\, H^{0}(X,\,E_{L_s}(\mathbb{V}^{0}_{s})\otimes L).$ This completes the proof.
\end{proof}

\section{Real Higgs bundles and the non-abelian Hodge correspondence}\label{Section4}

Let $G^\R$ be a semisimple real form of a connected complex semisimple Lie group $G$ defined by a conjugation
$\mu\,\in\, \Conj(G)$. Let $\tau \,\in\, \Conj(G)$, commuting with $\mu$ and defining a compact real form $K\,
\subset \,G$. The subgroup $H^\R\,:=\,K\cap G^\R$ defines a maximal compact subgroup of $G^\R$. Let $\lieg^\R
\,=\,\lieh^\R\oplus \liem^\R$ be 
the Cartan decomposition of $\lieg^\R$, the Lie algebra of $G^\R$, where $\lieh^\R$ is the Lie algebra of 
$H^\R$ and $\liem^\R$ is its orthogonal complement with respect to the Killing form. Let $H$ and $\liem$ be the 
complexifications of $H^\R$ and $\liem^\R$, respectively.
Let $\sigma_{G}$ be a $(\mu,\tau)$-compatible conjugation of $G$
 (see Example \ref{isotropyrep}). By abuse of notation, we denote by $\sigma_{G}$ the restriction to $H$ and 
the restriction of $d\sigma_{G}$ to $\mM$. Then
the isotropy representation $\iota\,:\,H\,\longrightarrow\, \GL(\mM)\,$ is
$(\sigma_{G},\, \sigma_{G})$-compatible (see Example \ref{isotropyrep}).

Let $(X,\, \sigma_{X})$ be a compact Klein surface, and let
$K_{X}$ be the canonical bundle of $X$.
The anti-holomorphic involution $\sigma_X$ induces a real structure $\sigma_{K_{X}}$ on $K_{X}$. 

\subsection{Real $G^\R$-Higgs bundles and Hitchin equations}\label{Subsection4.1}

Let $ Z(H)$ be the centers $H$ and
$Z(H)_{2}^{\sigma_{G}}$ be the subgroup of elements of $Z(H)$ of order two 
that are invariant under $\sigma_{G}.$
Set 
\begin{equation}\label{eq:intersection}
 Z_\iota^{\sigma_{G}}\,:=\,Z(H)_{2}^{\sigma_{G}}\cap\ker(\iota)
\end{equation}
and let $c\,\in\, Z_\iota^{\sigma_{G}}$. 
A $(\sigma_{X},\sigma_{G},\, c,\,\pm)$-\textbf{real $G^\R$-Higgs bundle} is 
a $(\sigma_{X},\sigma_{G},c,d\sigma_{G},\sigma_{K_{X}},\pm)$-real $K_{X}$-twisted $H^{\CC}$-Higgs-pair of type $\iota$. Set $\ssigma=(\sigma_{X},\sigma_{G}, c,\pm)$. Sometimes we will refer to these objects as $\ssigma$-real 
$G$-Higgs bundles.

Let $\zZ(\hH)$ be the center of $\hH$ and $\alpha\,\in \,
\sqrt{-1}\hH\,\cap \,\zZ(\hH)$. The notions of $\alpha$-stability, $\alpha$-semistability 
and $\alpha$-polystability in Section \ref{Subsection3.3} apply to $\ssigma$-real $G^\R$-Higgs bundles. Since we are mostly interested in the relation of these objects to representations of the fundamental group of $X$ we will take 
$\alpha\,=\,0$ and will refer to $0$-stability simply as stability (the same for semistability and polystability).

Denote by $\Mm(G^\R,\sigma_{X},\sigma_{G}, c,\pm)$ the moduli space of isomorphism classes of polystable
$(\sigma_{X},\,\sigma_{G},\, c,\,\pm)$-real $G^\R$-Higgs bundles.
We may fix the topological type $d\in\,\pi_1(H)\cong \pi_1(H^\R)$, and 
consider the subvariety $\Mm_{d}(G^\R,\sigma_{X},\sigma_{G}, c,\pm)$. 

Let $(E,\,\varphi,\, \sigma_{E})$ be a $\ssigma$-real $G^\R$-Higgs bundle over 
$(X,\, \sigma_{X})$.
Let $h$ be a $\sigma_E$-compatible reduction of the structure group of $E$ from $H$ to $H^\R$,
and let 
\begin{equation}\label{def:definiciontau}
\tau_{h}\,:\,\Omega^{1,0}(E(\mM))\,\longrightarrow\,\Omega^{0,1}(E(\mM))
\end{equation}
be the map defined by
the compact conjugation $\tau$ in the fibers induced by $h$ combined with complex conjugation on complex $1$-forms. 

Choose a $\sigma_X$-compatible K\"ahler form on $X$ as defined in Section 
\ref{real-riemann}. This defines a $\sigma_{K_X}$-compatible metric on $K_X$. Taking $\alpha\,
=\,0$, one can show that the 
Hermite--Einstein--Higgs equation 
(\ref{eq:Einstein}) coincides in this particular case with the 
{\bf Hitchin equation}
\begin{equation}\label{eq:HEhiggs}
F_{h}-[\varphi,\, \tau_{h}(\varphi)]\,=\,0\,.
\end{equation}

As a particular case of Theorem \ref{th:Hitchin--Kobayashi} one thus has the following.

\begin{The}\label{th:HKreal}
A $\ssigma$-real $G^\R$-Higgs bundle $(E,\varphi, \sigma_E)$ is polystable if and only if
there exists a $\sigma_E$-compatible reduction $h$ of the structure group of $E$ from
$H$ to $H^\R$ satisfying (\ref{eq:HEhiggs}).
\end{The}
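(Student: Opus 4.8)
The plan is to deduce Theorem \ref{th:HKreal} from the general Hitchin--Kobayashi correspondence of Theorem \ref{th:Hitchin--Kobayashi}, the only genuine work being to check that, in this situation, the Hermite--Einstein--Higgs equation \eqref{eq:Einstein} reduces to the Hitchin equation \eqref{eq:HEhiggs}. First I would set up the dictionary between the two frameworks. By definition a $\ssigma$-real $G^\R$-Higgs bundle is a $(\sigma_{X},\sigma_{G},c,d\sigma_{G},\sigma_{K_{X}},\pm)$-real $K_{X}$-twisted Higgs pair of type $\iota$, so Theorem \ref{th:Hitchin--Kobayashi} applies verbatim once we take the complex structure group to be $H$, the representation $\rho$ to be the isotropy representation $\iota\,:\,H\,\longrightarrow\,\GL(\mM)$, the twisting bundle $L$ to be $K_{X}$, and the invariant form $B$ to be a suitable multiple of the Killing form of $\lieg$ restricted to $\hH$ (positive on $\hH^\R$). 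Since $H^\R$ is compact with complexification $H$, it is a maximal compact subgroup of $H$; and because $\sigma_{G}$ is $(\mu,\tau)$-compatible it commutes with both $\mu$ and $\tau$, hence fixes $H^\R=G^{\mu}\cap G^{\tau}$ setwise, so $H^\R$ is exactly the $\sigma_{G}$-invariant maximal compact subgroup required in Theorem \ref{th:Hitchin--Kobayashi} (see Example \ref{isotropyrep}). A $\sigma_{E}$-compatible reduction of the structure group of $E$ from $H$ to $H^\R$ is thus precisely a $\sigma_{E}$-compatible reduction to the maximal compact subgroup in the sense of that theorem.

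With this dictionary, I would specialize the central parameter to $\alpha=0$, as fixed throughout the section, and apply Theorem \ref{th:Hitchin--Kobayashi}: the bundle $(E,\varphi,\sigma_{E})$ is polystable if and only if it carries a $\sigma_{E}$-compatible reduction $h$ from $H$ to $H^\R$ solving $\Lambda F_{h}+\mu_{h}(\varphi)=0$. It then remains only to identify this equation, for $\rho=\iota$ and $h_{L}=h_{K_{X}}$ the metric induced by the chosen $\sigma_{X}$-real K\"ahler form, with \eqref{eq:HEhiggs}.

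The substantive step, and the one I expect to be the main obstacle, is the computation of the moment map $\mu_{h}(\varphi)=\widetilde{Q}(-\widehat{\varphi})$ for the isotropy representation. Here $d\iota(X)=\ad(X)|_{\mM}$, so the homomorphism $Q=B\circ(d\iota)^{*}$ sending $\widehat{\varphi}=\tfrac{\sqrt{-1}}{2}\varphi''$ into $E_{H^\R}(\hH^\R)$ is controlled by the bracket $[\mM,\mM]\subset\hH$. Unwinding the definitions of $\widehat{\varphi}$ and $\widetilde{Q}$ from Section \ref{Subsection3.2}, and using that $h$ together with the compact conjugation $\tau$ determines the Hermitian metric on $\mM$, I would identify $\mu_{h}(\varphi)$ with $-\Lambda[\varphi,\tau_{h}(\varphi)]$, where $\tau_{h}$ is the map \eqref{def:definiciontau}; contracting the $(1,1)$-form equation \eqref{eq:HEhiggs} with $\omega$, which on a Riemann surface is an isomorphism onto $0$-forms, then shows that it is equivalent to \eqref{eq:Einstein} with $\alpha=0$. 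This identity is the same as in the non-real theory of $G^\R$-Higgs bundles, because none of $B$, $\iota$, or the metrics involve the conjugation $\sigma_{G}$: the reality data enter only by restricting $h$ to be $\sigma_{E}$-compatible and so do not change the shape of the equation. I would therefore carry over the known computation, observing that it respects the $\sigma_{E}$-reality constraint automatically, since $F_{h}$, $\mu_{h}(\varphi)$ and the bracket term are all assembled from $\sigma$-equivariant data. Combining the equivalence of the two equations with the specialized statement of Theorem \ref{th:Hitchin--Kobayashi} then yields Theorem \ref{th:HKreal}.
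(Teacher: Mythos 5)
Your proposal follows exactly the route the paper takes: Theorem \ref{th:HKreal} is obtained as the special case of Theorem \ref{th:Hitchin--Kobayashi} with structure group $H$, representation $\iota$, twisting bundle $K_X$, maximal compact $H^\R$ and $\alpha=0$, together with the identification of the moment map term $\mu_h(\varphi)$ with $-\Lambda[\varphi,\tau_h(\varphi)]$ so that \eqref{eq:Einstein} becomes \eqref{eq:HEhiggs}. Your write-up in fact supplies more detail (the $\sigma_G$-invariance of $H^\R$ and the unwinding of $\widetilde{Q}$ for the isotropy representation) than the paper, which simply asserts this coincidence, and it is correct.
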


And Corollary \ref{Cor:realpairspairs} gives now the following.

\begin{Cor}\label{Cor:realpolypoly}
A $\ssigma$-real $G^\R$-Higgs bundle $(E,\,\varphi, \,\sigma_{E})$ is polystable if and only if the underlying
$G^\R$-Higgs bundle $(E,\,\varphi)$ is polystable.
\end{Cor}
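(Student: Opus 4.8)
The plan is to obtain this corollary as an immediate specialization of Corollary \ref{Cor:realpairspairs}, which has already been established for arbitrary $\ssigma$-real $L$-twisted Higgs pairs. First I would recall that, by the definition given at the start of Section \ref{Subsection4.1}, a $(\sigma_X,\sigma_G,c,\pm)$-real $G^\R$-Higgs bundle $(E,\varphi,\sigma_E)$ is \emph{by definition} a $(\sigma_X,\sigma_G,c,d\sigma_G,\sigma_{K_X},\pm)$-real $K_X$-twisted $H$-Higgs pair of type $\iota$, where $\iota\colon H\longrightarrow\GL(\mM)$ is the isotropy representation and $\sigma_G$ is a $(\mu,\tau)$-compatible conjugation. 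Thus the entire framework of Section \ref{Section3} applies after the substitutions $L=K_X$, $G=H$, $\VV=\mM$, $\rho=\iota$, $\sigma_\VV=d\sigma_G$ and $\sigma_L=\sigma_{K_X}$. The only hypotheses of that framework one must verify in the present instance are that $\iota$ is $(\sigma_G,d\sigma_G)$-compatible, which is exactly the content of Example \ref{isotropyrep}, and that $c\in Z_\iota^{\sigma_G}=Z(H)_2^{\sigma_G}\cap\ker(\iota)$ of \eqref{eq:intersection} lies in the set $Z_2^{\sigma_G}\cap\Ker(\rho)$ required by the general definition, which holds directly from the definition of $Z_\iota^{\sigma_G}$.

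Next I would observe that the stability notions coincide. As noted in Section \ref{Subsection4.1}, polystability for a real $G^\R$-Higgs bundle is, by convention, $\alpha$-polystability in the sense of Section \ref{Subsection3.3} with the choice $\alpha=0$. This choice trivially lies in the relevant center and satisfies $d\sigma_G(\alpha)=-\alpha$, so all the conditions imposed on $\alpha$ in the general setup are vacuously met. Consequently, for both the triple $(E,\varphi,\sigma_E)$ and its underlying pair $(E,\varphi)$, the word ``polystable'' means precisely ``$0$-polystable'' in the sense of the general definition.

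With these identifications in place, the corollary is immediate: applying Corollary \ref{Cor:realpairspairs} with $\alpha=0$ to this particular class of pairs shows that $(E,\varphi,\sigma_E)$ is polystable if and only if the underlying $G^\R$-Higgs bundle $(E,\varphi)$ is polystable. There is no substantive obstacle to overcome here; the proof is entirely a matter of confirming that real $G^\R$-Higgs bundles are a bona fide instance of the general real twisted Higgs pairs and that the reality and stability data line up with the general hypotheses. Once these routine compatibility checks are recorded, the statement follows at once from the previously proved general correspondence.
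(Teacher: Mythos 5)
Your proposal is correct and matches the paper's approach exactly: the paper simply deduces this corollary as an immediate specialization of Corollary \ref{Cor:realpairspairs} to the case $L=K_X$, $G=H$, $\rho=\iota$, $\alpha=0$, which is precisely the reduction you carry out (with the routine compatibility checks spelled out in more detail than the paper bothers to record).
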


From the point of view of moduli spaces it is convenient to look at the Hitchin equation from a different
but equivalent point of view (see \cite{GGM:21}). To explain this, fix a $C^\infty$ principal $H$-bundle 
$\EE_{H}$ and a reduction $h$ to a $H^\R$-bundle $\EE_{H^\R}\,\subset\, \EE_H$. 
We are then looking for a connection $A$ on $\EE_{H^\R}$ and a smooth section 
$\varphi\,\in\, \Omega^{1,0}(X,\,\EE_{H}(\liem))$. The Hitchin equations become then 

\begin{equation}\label{hitchin}
\begin{array}{l}
F_A -[\varphi,\tau_h(\varphi)]\,=\, 0\\
\dbar_A\varphi\,=\, 0\, .
\end{array}
\end{equation}
Here $d_A$ is the covariant derivative associated to $A$, and
$\dbar_A$ is the $(0,1)$ part of $d_A$. 

Let $\Mg(G^\R)$ be the space of solutions $(A,\varphi)$ to 
(\ref{hitchin}) modulo the gauge group $\HHH^\R$ of $\EE_{H^\R}$.
Recall (see \cite{GGM:21}) that, if $d\in\pi_1(H)$ is the topological class
of $\EE_H$, and $\cM_d(G^\R)$ is the moduli space of $G^\R$-Higgs bundles, 
then there is a homeomorphism
\begin{equation}\label{HK-Higgs-bundles}
\Mg(G^\R)\,\cong\, \cM_d(G^\R).
\end{equation} 

Now, if we are equipped with conjugations $\sigma_X$ and $\sigma_G$ 
and $c\,\in\, Z_\iota^{\sigma_G}$, 
as at the beginning of Section \ref{Section4}, we can consider a 
$(\sigma_X,\sigma_G,c)$-real structure $\sigma_{\EE_{H}}$ on $\EE_{H}$, which,
if the reduction $h$ is chosen to be $\sigma_{\EE_{H}}$-compatible, it will restrict to
$\sigma_{\EE_{H^\R}}$ on $\EE_{H^\R}$. 

Let $\TTT^\pm$ be the set triples $(\sigma_{\EE_{H}},\,A,\,\varphi)$, where 
$A$ is $\sigma_{\EE_{H^\R}}$-compatible and $\varphi$ satisfies \eqref{eq:realhiggs},
which in this case is
\begin{equation}\label{eq2:realhiggs}
\sigma_{V}\otimes\sigma_{K_X}(\varphi)\,=\, \pm \sigma^*_X\varphi\, ,
\end{equation}
where $V\,=\,\EE_{H}(\liem)$.
The action of the gauge group $\HHH^\R$ of $\EE_{H^\R}$ preserves $\TTT^\pm$
and we define the moduli space $\Mg(G^\R,\sigma_X,\sigma_G,c,\pm)$
of $(\sigma_X,\,\sigma_G,\,c,\,\pm)$-real solutions to
(\ref{hitchin}) on $\EE_{H^\R}$ as the set of triples 
$(\sigma_{\EE_{H}},\,A,\,\varphi)\,\in\, \TTT^\pm$ with $(A,\,\varphi)$ satisfying 
\eqref{hitchin} modulo the gauge group $\HHH^\R$.

Theorem \ref{th:HKreal} can now be reformulated as follows.

\begin{The}\label{th2:HKreal} 
Fix the topological class of $\EE_H$ to be 
$d\,\in\,\pi_1(H)$, then \eqref{HK-Higgs-bundles} 
restricts to give a homeomorphism
$$
\cM_d(G^\R,\sigma_X,\sigma_G,c,\pm)\,\cong\,
\Mg(G^\R,\sigma_X,\sigma_G,c,\pm)\, .
$$
\end{The}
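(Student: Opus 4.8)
The plan is to establish Theorem \ref{th2:HKreal} as a direct reformulation of the correspondence in Theorem \ref{th:HKreal}, carefully tracking how real structures interact with the homeomorphism \eqref{HK-Higgs-bundles}. First I would recall that, by the general gauge-theoretic identification discussed earlier in this section, a $G^\R$-Higgs bundle of topological type $d$ together with a solution to the Hermite--Einstein--Higgs equation corresponds to a solution $(A,\varphi)$ of the Hitchin equations \eqref{hitchin} on the fixed $C^\infty$ bundle $\EE_{H^\R}$, modulo the gauge group $\HHH^\R$; this is precisely the content of \eqref{HK-Higgs-bundles}. The task is then to show that the extra datum of a $(\sigma_X,\sigma_G,c,\pm)$-real structure is carried faithfully across this identification in both directions.

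The key steps, in order, are as follows. I would start on the Higgs-bundle side: given a polystable $\ssigma$-real $G^\R$-Higgs bundle $(E,\varphi,\sigma_E)$, Theorem \ref{th:HKreal} produces a $\sigma_E$-compatible reduction $h$ of the structure group from $H$ to $H^\R$ satisfying the Hitchin equation \eqref{eq:HEhiggs}. By Proposition \ref{pr:dolbeautyconexiones} and the bijection between complex structures and connections established in Section \ref{subsection2.3}, the holomorphic structure of $E$ together with the $\sigma_E$-compatible reduction $h$ corresponds to a $\sigma_{\EE_{H^\R}}$-compatible connection $A$ on $\EE_{H^\R}$. Here I would invoke Proposition \ref{equivalenciaprincipal112} to record that the real structure $\sigma_E$ is precisely encoded by a $(\sigma_X,\sigma_G,c)$-real structure $\sigma_{\EE_H}$ on the underlying smooth bundle, and the compatibility of $h$ with $\sigma_E$ ensures that $\sigma_{\EE_H}$ restricts to $\sigma_{\EE_{H^\R}}$. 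The Higgs field $\varphi$, satisfying \eqref{eq:realhiggs} (which in this setting is \eqref{eq2:realhiggs}), is unchanged and yields exactly a triple $(\sigma_{\EE_H},A,\varphi)\in\TTT^\pm$ solving \eqref{hitchin}. This defines a map $\cM_d(G^\R,\sigma_X,\sigma_G,c,\pm)\to\Mg(G^\R,\sigma_X,\sigma_G,c,\pm)$.

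For the inverse direction, I would start from a triple $(\sigma_{\EE_H},A,\varphi)\in\TTT^\pm$ solving \eqref{hitchin}. The connection $A$ determines, via the Chern correspondence of Proposition \ref{pr:dolbeautyconexiones}, a $\sigma_{\EE_H}$-real complex structure on $\EE_H$, hence by Proposition \ref{equivalenciaprincipal112} a $(\sigma_X,\sigma_G,c)$-real holomorphic $H$-bundle $(E,\sigma_E)$ whose underlying smooth bundle is $\EE_H$; the reality condition \eqref{eq2:realhiggs} on $\varphi$ says exactly that $(E,\varphi,\sigma_E)$ is a $\ssigma$-real $G^\R$-Higgs bundle. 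The Hitchin equation \eqref{eq:HEhiggs} makes $h$ Hermite--Einstein--Higgs, so by Theorem \ref{th:HKreal} the resulting bundle is polystable. I would then verify that these two assignments are mutually inverse and that they descend correctly to the quotients: two triples in $\TTT^\pm$ are $\HHH^\R$-gauge equivalent if and only if the corresponding $\ssigma$-real $G^\R$-Higgs bundles are isomorphic as such, using that a $\sigma_{\EE_H}$-compatible complex gauge transformation taking one solution to another restricts to a morphism of real holomorphic bundles respecting the real structures. Finally, continuity of both maps follows from the fact that the underlying homeomorphism \eqref{HK-Higgs-bundles} is continuous and the real-structure data vary continuously.

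The main obstacle I expect is the bookkeeping at the level of \emph{moduli}: showing that the bijection of objects descends to a well-defined bijection of equivalence classes, and in particular that $\HHH^\R$-gauge equivalence of real triples matches isomorphism of $\ssigma$-real $G^\R$-Higgs bundles. This requires checking that the real structure $\sigma_{\EE_H}$ is preserved under the relevant complex gauge transformations and that polystability (rather than merely stability) is respected, which is where Corollary \ref{Cor:realpolypoly} and the Jordan--H\"older reduction of Theorem \ref{th:JHreduction} enter to handle strictly polystable objects. The analytic content—existence and uniqueness of solutions—has already been absorbed into Theorem \ref{th:HKreal}, so the remaining work is genuinely the functorial compatibility of the real structures with the Chern correspondence, which is essentially formal once Propositions \ref{equivalenciaprincipal112} and \ref{pr:dolbeautyconexiones} are in hand.
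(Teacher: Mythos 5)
Your proposal is correct and follows essentially the same route as the paper, which presents this theorem as a direct reformulation of Theorem \ref{th:HKreal} via the correspondence between $\ssigma$-real Higgs pairs and triples in $\TTT^\pm$ set up through Propositions \ref{equivalenciaprincipal112} and \ref{pr:dolbeautyconexiones}. The paper gives no separate proof beyond this identification, so your more detailed tracking of the gauge equivalences and the polystable case simply makes explicit what the paper leaves implicit.
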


\subsection{Real Higgs bundles and compatible representations of the
orbifold fundamental group}\label{Subsection4.3}

Let $x\,\in\, X$ and $\pi_1(X,\,x)$ be the fundamental group of $X$.
A \textbf{representation} of $\pi_1(X,\,x)$ in
$G^\R$ is a homomorphism $\rho\,\colon\, \pi_1(X,\,x) \,\longrightarrow\, G^\R$.
The group $G^\R$ acts on the set $\Hom(\pi_1(X,\,x),\,G^\R)$ of all such 
homomorphisms by conjugation:
\[
(g \cdot \rho)(\gamma) \,= \,g \rho(\gamma) g^{-1}
\]
for $g\,\in\, G^\R$, $\rho \,\in\, \Hom(\pi_1(X,\,x),\,G^\R)$ and $\gamma\,\in\,
\pi_1(X,\,x)$. If we restrict the action to the subspace
$\Hom^+(\pi_1(X,\,x),\,G^\R)$ consisting of reductive
representations, the orbit space is Hausdorff. By a \textbf{reductive representation} we mean one
that composed with the adjoint representation in the Lie algebra
of $G^\R$ decomposes as a sum of irreducible representations.
Define the
{\bf moduli space of representations} or {\bf character variety} of $\pi_1(X,\,x)$ in $G^\R$ to be
the orbit space
\[
\mathcal{R}(G) \,=\, \Hom^{+}(\pi_1(X,\,x),\,G) / G\, . 
\]

The moduli space $\calR(G^\R)$ is independent of the choice of base point $x\in X$ and has the
structure of an algebraic variety \cite{Ri}.

Given a representation $\rho\,\in\, \Hom(\pi_1(X,\,x),\,G^\R)$
there is an associated flat $G^\R$-bundle on
$X$, and conversely, the holonomy of a flat $G^\R$-bundle on $X$ defines
a representation $\rho\,\in\, \Hom(\pi_1(X,\,x),\,G^\R)$.
More precisely, there is a bijection between the set of equivalence 
classes of representations $\Hom(\pi_1(X,\,x),\,G^\R) /G^\R$ and the set 
of equivalence classes of flat $G^\R$-bundles, which in turn is 
parameterized by the cohomology set $H^1(X,\,G^\R)$. 

Given a representation $\rho\in \Hom(\pi_1(X,\,x),\,G^\R)$ there is a topological invariant taking values in
$\pi_1(G^\R)\,=\,\pi_1(H^\R)$. Fixing $d\,\in\, \pi_1(G^\R)$
we can consider the subvariety 
$\calR_d(G^\R)\,\subset\, \calR(G^\R)$ consisting of those representations with topological class $d$.

The {\bf non-abelian Hodge correspondence} states that there is a homeomorphism
\begin{equation}\label{na-hodge}
\mathcal{R}_d(G^\R)\,\cong\, \mathcal{M}_d(G^\R)\, . 
\end{equation}

The proof of (\ref{na-hodge}) is given by combining (\ref{HK-Higgs-bundles}) 
and Donaldson--Corlette's existence theorem of harmonic metrics.
To explain this, let
$\EE_{G^\R}$ be a $C^\infty$ principal $G^\R$-bundle over $X$ with
fixed topological class $d\in\pi_1(G^\R)\,=\,\pi_1(H^\R)$. 
Fix a reduction $\EE_{H^\R}$ of the structure group $h$ of $\EE_{G^\R}$ to $H^\R$.
We have that
$$
\EE_{G^\R}(\lieg^\R) \,=\,\EE_{H^\R}(\lieh^\R)\oplus\EE_{H^\R}(\liem^\R)\,.
$$
 
The covariant derivative $D$ on 
$\EE_{G^\R}(\lieg^\R)$ of a connection on $\EE_{G^\R}$ decomposes uniquely as
\begin{equation}\label{connection-decomposition}
D\,=\,d_A + \psi\, ,
\end{equation}
where $A$ is a connection on $\EE_{H^\R}$ and $d_A$ is its
covariant derivative, and
$\psi\,\in\, \Omega^1(X,\,\EE_{H^\R}(\liem^\R))$. Let
$F_A$ be the curvature of $A$.
Consider the following set of equations for the pair $(A,\,\psi)$:
\begin{equation}\label{harmonicity}
\begin{array}{l}
F_A +\frac{1}{2}[\psi,\,\psi]\,=\, 0\\
d_A\psi\,=\,0\\
d_A^\ast\psi\,=\,0\, .
\end{array}
\end{equation}
These equations are invariant under the action of $\HHH^\R$, the gauge group of
$\EE_{H^\R}$. The theorem of Corlette \cite{corlette} 
(see Donaldson \cite{donaldson} for $G^\R=\SL(2,\CC)$), says that there 
is a homeomorphism
\begin{equation}\label{corlette}
\{\mbox{Reductive flat connections $D$ on $\EE_{G^\R}$}\}/\GGG^\R\,\cong\,
\{(d_A,\,\psi)\;\;\mbox{satisfying}\;\;
(\ref{harmonicity})\}/\HHH^\R.
\end{equation}
Recall that a flat connection is said to be {\bf reductive} if the 
holonomy representation is reductive.

To complete the argument, leading to (\ref{na-hodge}), we just need
(\ref{HK-Higgs-bundles}) and the simple fact that the correspondence 
$(A,\,\varphi)\,\longmapsto\, (A,\,\psi\,:=\,\varphi-\tau(\varphi))$
defines a homeomorphism
\begin{equation}\label{hitchin-harmonicity}
\{(A,\varphi)\;\;\mbox{satisfying}\;\;
(\ref{hitchin})\}/\HHH^\R\cong
\{(A,\,\psi)\;\;\mbox{satisfying}\;\;
(\ref{harmonicity})\}/\HHH^\R\, .
\end{equation}

The first two equations in (\ref{harmonicity}) are equivalent
to the flatness of $D\,=\,d_A+\psi$, and (\ref{corlette})
simply says that in the $\GGG^\R$-orbit of a reductive flat connection
$D_0$ on $\EE_{G^\R}$ we can find a flat connection $D\,=\,g(D_0)$ with $g\in \GGG^\R$ 
such that if we write $D\,=\,d_A+\psi$, the
additional condition $d_A^\ast\psi\,=\,0$ is satisfied, where the operator $d_A^\ast$ is defined 
using the Hodge $\ast$-operator given by the complex structure of $X$. This can be interpreted
more geometrically in terms of the reduction $g(h)$ of $\EE_{G^\R}$
to an $H^\R$-bundle obtained by the action of $g$ on the initial reduction $h$.
The equation
$d_A^\ast\psi\,=\,0$ is equivalent to the {\bf harmonicity} of the section of the $G^\R/H^\R$-bundle 
$\EE_{G^\R}(G^\R/H^\R)$ defined by $g(h)$.

If, as in Section \ref{Subsection4.1}, we are equipped 
with conjugations $\sigma_X$ and $\sigma_G$, $c\,\in\, Z_\iota^{\sigma_G}$, and
$\EE_H$ is a $C^\infty$ $H$-bundle, we can consider a
$(\sigma_X,\sigma_G,c)$-real structure $\sigma_{\EE_{H}}$ on $\EE_{H}$.
If we fix also a reduction $h$ of structure group $\EE_{H^\R}$
chosen to be $\sigma_{\EE_{H}}$-compatible, then $\sigma_{\EE_{H}}$ will restrict to
$\sigma_{\EE_{H^\R}}$ on $\EE_{H^\R}$, and since $\sigma_G$ preserves $\liem^\R$,
we also have a restriction $\sigma_{\EE_{H^\R}(\liem^\R)}\,:\,\EE_{H^\R}(\liem^\R)\,
\longrightarrow\, \EE_{H^\R}(\liem^\R)$.
Let $\EE_{G^\R}$ the $G^\R$-bundle obtained from $\EE_{H^\R}$ by extension of 
structure group. Since $\sigma_G$ preserves $G^\R$, we can extend $\sigma_{\EE_{H^\R}}$
to a map $\sigma_{\EE_{G^\R}}\,:\, \EE_{G^\R}\,\longrightarrow\, \EE_{G^\R}$, and we also have
$\sigma_{\EE_{G^\R}(\lieg^\R)}\,:\, \EE_{G^\R}(\lieg^\R)\,\longrightarrow\, \EE_{G^\R}(\lieg^\R)$.

Let $D$ be a connection on $\EE_{G^\R}$. Recall that this can be regarded as a 
map $D\,:\, T\EE_{G^\R}\,\longrightarrow\, \lieg^\R$. The compact conjugation $\tau\,:\,\lieg
\,\longrightarrow\,\lieg$ 
preserves $\lieg^\R$ and hence we can consider the connection 
$\tau(D)\,:=\,\tau\circ D$.
Let $\PPP^\pm$ be the set of pairs $(\sigma_{\EE_{G^\R}},\,D)$, where
$\sigma_{\EE_{G^\R}}$ is as above and $D$ is a connection on $\EE_{G^\R}$ such 
that the connection $\tau^{\frac{1}{2}\mp\frac{1}{2}}(D)$ is 
$\sigma_{\EE_{G^\R}}$-compatible. Abusing notation, let $D$ denote also the 
covariant derivative defined by $D$ on $\EE_{G^\R}(\lieg^\R)$. 
From (\ref{connection-decomposition}) we have the decomposition
$D\,=\,d_A+\psi$, where $A$ is a connection on $\EE_{H^\R}$ and 
$\psi\in \Omega^1(X, \EE_{H^\R}(\liem^\R))$. Since $\tau(A)\,=\,A$ and 
$\tau(\psi)\,=\,-\psi$, the set of pairs $\PPP^\pm$ can be identified 
with the set of triples $(\sigma_{\EE_{G^\R}},\,A,\,\psi)$, where
$A$ is $\sigma_{\EE_{H^\R}}$-compatible and
$\sigma_{\EE_{G^\R}(\liem^\R)} (\psi)\,=\,\pm \sigma_X^*(\psi)$.
Setting $\psi\,:=\,\varphi-\tau(\varphi)$, it is immediate that $\PPP^\pm$
is then in bijection with the set 
of triples $(\sigma_{\EE_{H}},\,A,\,\varphi)\,\in\, \TTT^\pm$ considered in Section 
\ref{Subsection4.1}.
The uniqueness of the harmonic section provided by the Donaldson--Corlette theorem \eqref{corlette} (recall
that $G^\R$ is semisimple) implies that this section is 
fixed by $\sigma_{\EE_{G^\R}}$, or more precisely, by the corresponding map defined on the space of sections
of $\EE_{G^\R}(G^\R/H^R)$. As a consequence of this and 
(\ref{hitchin-harmonicity}) we have the following.

\begin{Pro}\label{corlette-pm}
The bijection (\ref{corlette}) restricts to give a bijection
$$
\{(\sigma_{\EE_{G^\R}},D)\in\PPP^\pm\;\; \mbox{with $D$ reductive flat}\}/\GGG^\R 
\cong \Mg(G^\R,\sigma_X,\sigma_G,c,\pm).
$$
\end{Pro}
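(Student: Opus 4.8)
The plan is to establish that the two natural constructions between $\PPP^\pm$ and $\Mg(G^\R,\sigma_X,\sigma_G,c,\pm)$ are well-defined and mutually inverse, the substantive point being that the harmonic reduction furnished by Donaldson--Corlette respects the real structure. For the forward map, starting from $(\sigma_{\EE_{G^\R}},\,D)\in\PPP^\pm$ with $D$ reductive and flat, I would apply (\ref{corlette}) to produce a gauge transformation $g\in\GGG^\R$ for which $g(D)=d_A+\psi$ solves the harmonicity equations (\ref{harmonicity}); equivalently, the section $s$ of $\EE_{G^\R}(G^\R/H^\R)$ given by the reduction $g(h)$ is harmonic.

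The crucial step is to show that $s$ is fixed by the involution on sections of $\EE_{G^\R}(G^\R/H^\R)$ induced by $\sigma_{\EE_{G^\R}}$ and $\sigma_X$. Here I would check that this involution carries $D$-harmonic sections to $D$-harmonic sections: since $\tau^{\frac{1}{2}\mp\frac{1}{2}}(D)$ is $\sigma_{\EE_{G^\R}}$-compatible, $D$ is preserved by the involution, and since $\omega$ is $\sigma_X$-real with $\sigma_X^*\omega=-\omega$, the Hodge $\ast$-operator, and hence the condition $d_A^\ast\psi=0$, is invariant under the combined action of $\sigma_X$ and $\sigma_{\EE_{G^\R}}$. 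As $G^\R$ is semisimple, the harmonic section in the $\GGG^\R$-orbit of a reductive flat connection is unique, so the involution necessarily fixes $s$. Consequently $A$ is $\sigma_{\EE_{H^\R}}$-compatible and $\psi$ satisfies $\sigma_{\EE_{G^\R}(\liem^\R)}(\psi)=\pm\sigma_X^*(\psi)$; under the identification of $\PPP^\pm$ with triples and the correspondence $\psi=\varphi-\tau(\varphi)$ of (\ref{hitchin-harmonicity}), the resulting data descend to a triple $(\sigma_{\EE_H},\,A,\,\varphi)\in\TTT^\pm$ with $(A,\,\varphi)$ solving (\ref{hitchin}), defining a point of $\Mg(G^\R,\sigma_X,\sigma_G,c,\pm)$.

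For the inverse map, starting from $(\sigma_{\EE_H},\,A,\,\varphi)\in\TTT^\pm$ with $(A,\,\varphi)$ satisfying (\ref{hitchin}), I would pass through (\ref{hitchin-harmonicity}) to the real solution $(\sigma_{\EE_{G^\R}},\,A,\,\psi)$ of (\ref{harmonicity}), with $\psi=\varphi-\tau(\varphi)$, and then invoke (\ref{corlette}) to obtain a reductive flat connection $D=d_A+\psi$. The reality conditions on $(A,\,\psi)$, read through the explicit identification of $\PPP^\pm$ with triples, make $\tau^{\frac{1}{2}\mp\frac{1}{2}}(D)$ compatible with $\sigma_{\EE_{G^\R}}$, so $(\sigma_{\EE_{G^\R}},\,D)\in\PPP^\pm$. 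Because both assignments are restrictions of the $\GGG^\R$- and $\HHH^\R$-equivariant maps in (\ref{corlette}) and (\ref{hitchin-harmonicity}), they descend to the quotients and remain mutually inverse there, which yields the asserted bijection.

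The step I expect to be the main obstacle is the verification that the induced involution on sections of $\EE_{G^\R}(G^\R/H^\R)$ preserves harmonicity: one must track how the anti-holomorphic $\sigma_X$ and the anti-linear $\sigma_{\EE_{G^\R}}$ act on the operator $d_A^\ast$, using the relation $\sigma_X^*\omega=-\omega$ to offset the reversal of orientation so that the equation $d_A^\ast\psi=0$ is genuinely invariant. Granting this, the uniqueness coming from semisimplicity of $G^\R$ makes the fixed-point conclusion, and hence the whole bijection, immediate.
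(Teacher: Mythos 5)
Your proposal is correct and follows essentially the same route as the paper: identify $\PPP^\pm$ with triples $(\sigma_{\EE_{G^\R}},A,\psi)$ via the decomposition $D=d_A+\psi$, pass to $\TTT^\pm$ by $\psi=\varphi-\tau(\varphi)$, and use the uniqueness of the Donaldson--Corlette harmonic section (valid since $G^\R$ is semisimple) to conclude that it is fixed by the induced involution, hence that the reality conditions descend. The only difference is that you make explicit the verification that the involution preserves harmonicity of sections (via $\sigma_X^*\omega=-\omega$ and the compatibility of $D$), a step the paper leaves implicit.
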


We now need to identify 
$\{(\sigma_{\EE_{G^\R}},D)\in\PPP^\pm\;\; \mbox{with $D$ reductive flat}\}/\GGG^\R$ in terms of representations of
the fundamental group of $X$. To do this, fix a point $x\,\in\, X,$ such that $\sigma_{X}(x)\,\neq\, x$. 
The \textbf{orbifold fundamental group} $\Gamma(X,\,x)$ of $(X,\,\sigma_{X})$ is, as a set, the
disjoint union of
$\pi_{1}(X,x)$ and $$\Path(X,\,x)\,:=\,\{\text{Homotopy classes of paths }~
\gamma\,:\,[0,\,1]\,\longrightarrow \,X\, \mid \,\gamma(0)\,=\,x, ~\gamma(1)\,=\,\sigma_{X}(x)\},$$
with the composition defined by $\gamma_{2}\gamma_{1}\,=\,
\sigma_{X}^{q}(\gamma_{2})\circ\gamma_{1}\, ,$ where $$
q=\begin{cases}
0& \text{if $\gamma_1\in \pi_1(X,\,x)$}\\
1& \text{if $\gamma_1\in \Path(X,\,x)$}\,.
\end{cases}
$$The function $q$ induces the short exact sequence
\begin{equation}\label{hq}
\xymatrix{
0\ar[r] &\pi_1(X,\,x)\ar[r]^{i} &\Gamma(X,\, x)\ar[r]^{q} &\mathbb{Z}/2\mathbb{Z}\ar[r]&0}\, ,
\end{equation}
where $i$ denotes the inclusion of groups.

Let $c\,\in\, Z^{\sigma_{G}}_{2}(H)\cap \Ker(\iota)\cap Z(G^\R)$, where recall
$Z(H)$ and 
$Z(G^\R)$ are the centers of $H$ and $G^\R$ respectively, and $Z^{\sigma_{G}}_{2}(H)$ is the subgroup of elements of order 2 in $Z(H)$ invariant under $\sigma_G$.

Let $\widehat{G}^\R_\pm\,=\,\widehat{G}^\R_\pm(\sigma_G,c)$ be the group whose underlying set is $G^\R\times(\mathbb{Z}/2\mathbb{Z})$ and
the group operation on it is given by
$$(g_1,\,e_1)(g_2,\,e_2)\,=\,(g_{1}(\sigma_{G}\tau^{\frac{1}{2}\mp \frac{1}{2}})^{e_1}(g_2) c^{e_{1}+ e_{2}},
\,e_{1}+e_{2})\, .$$

A representation $\widehat{\rho}\,:\,
\Gamma(X,\, x)\,\longrightarrow\, \widehat{G}^\R_\pm$ 
is called $(\sigma_X,\,\sigma_{G},\,c,\,\pm)$-\textbf{compatible} 
if it is an extension of a representation 
$\rho\,:\, \pi_1(X,\, x)\,\longrightarrow\, G^\R$ 
fitting in a commutative diagram of homomorphisms
\begin{equation}
\xymatrix{
0\ar[r] &\pi_1(X,\,x)\ar[r]^{i}\ar[d]^{\rho} &\Gamma(X,\, x)\ar[r]^{q}\ar[d]^{\widehat{\rho}} &\mathbb{Z}/2\mathbb{Z}\ar[r]\ar[d]^{\Id}&0\\
0\ar[r] &G^\R\ar[r]^{i'} &\widehat{G}^\R_\pm\ar[r]^{q'} &\mathbb{Z}/2\mathbb{Z}\ar[r]&0\,,
}
\end{equation}
where $i,\, i'$ are the inclusion maps and $q$ and $q'$ are the corresponding projections.

Let $\Rr(G^\R,\,\sigma_X,\,\sigma_{G},\,c,\,\pm)$ be the variety
consisting of $G^\R$-conjugacy classes of $(\sigma_X,\,\sigma_{G},\,c,\,\pm)$-compatible 
representations 
$\widehat{\rho}\,:\,
\Gamma(X,\, x)\,\longrightarrow\, \widehat{G}^\R_\pm$ 
whose restriction to $\pi_1(X,\,x)$ is reductive, that is,
its conjugacy class is an element in $\calR(G^\R)$.
We have the following.

\begin{Pro}\label{holonomy-pm}
The holonomy representation defines a bijection
$$
\{(\sigma_{\EE_{G^\R}},D)\,\in\,\PPP^\pm\;\; \mbox{with $D$ reductive flat }\}/\GGG^\R \cong \Rr_d(G^\R,\sigma_X,\sigma_{G},c,\pm).
$$
\end{Pro}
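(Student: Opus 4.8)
The plan is to upgrade the classical holonomy correspondence to the orbifold setting, using the real structure $\sigma_{\EE_{G^\R}}$ to produce the extra generator coming from $\Path(X,\,x)$. First I would recall the standard Riemann--Hilbert bijection: parallel transport based at $x$ sends a reductive flat connection $D$ on $\EE_{G^\R}$ to a reductive representation $\rho\,:\,\pi_1(X,\,x)\,\longrightarrow\, G^\R$, and this descends to a bijection between reductive flat connections modulo $\GGG^\R$ and conjugacy classes in $\Hom^+(\pi_1(X,\,x),\,G^\R)/G^\R\,=\,\calR(G^\R)$. I would then fix a point of the fibre $(\EE_{G^\R})_x$ to trivialise holonomy, noting that, because $\sigma_X(x)\,\neq\, x$, the involution $\sigma_{\EE_{G^\R}}$ carries this fibre to the distinct fibre $(\EE_{G^\R})_{\sigma_X(x)}$.

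Next I would define the extended holonomy $\widehat{\rho}\,:\,\Gamma(X,\,x)\,\longrightarrow\,\widehat{G}^\R_\pm$. On $\pi_1(X,\,x)\,\subset\,\Gamma(X,\,x)$ it is the ordinary holonomy $\rho$, placed in the $0$-component. For a class $\gamma\,\in\,\Path(X,\,x)$ of a path from $x$ to $\sigma_X(x)$, I would compose $D$-parallel transport from $(\EE_{G^\R})_x$ to $(\EE_{G^\R})_{\sigma_X(x)}$ with the involution $\sigma_{\EE_{G^\R}}$ returning to $(\EE_{G^\R})_x$; the resulting automorphism of the trivialised fibre is an element $g_\gamma\,\in\, G^\R$, and I set $\widehat{\rho}(\gamma)\,=\,(g_\gamma,\,1)$. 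By construction $q'\circ\widehat{\rho}\,=\,q$, so $\widehat{\rho}$ sits in the required commutative diagram and restricts to $\rho$ on $\pi_1(X,\,x)$.

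The heart of the argument is checking that $\widehat{\rho}$ is a homomorphism into $\widehat{G}^\R_\pm$ with exactly the stated twisted group law. For this I would compute $\widehat{\rho}(\gamma_2\gamma_1)$ using the orbifold composition $\gamma_2\gamma_1\,=\,\sigma_X^{q(\gamma_1)}(\gamma_2)\circ\gamma_1$ and compare with the product $\widehat{\rho}(\gamma_2)\widehat{\rho}(\gamma_1)$ in $\widehat{G}^\R_\pm$. The $\sigma_G$-anti-equivariance $\sigma_{\EE_{G^\R}}(eg)\,=\,\sigma_{\EE_{G^\R}}(e)\sigma_G(g)$, together with the defining property of $\PPP^\pm$—namely that $\tau^{\frac{1}{2}\mp\frac{1}{2}}(D)$, rather than $D$ itself, is the $\sigma_{\EE_{G^\R}}$-compatible connection—forces the second factor to be twisted by $\sigma_G\tau^{\frac{1}{2}\mp\frac{1}{2}}$ whenever the first class lies in $\Path(X,\,x)$, while the relation $\sigma^2_{\EE_{G^\R}}\,=\,c$ produces precisely the central cocycle contribution recorded in the group operation of $\widehat{G}^\R_\pm$. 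I would also verify well-definedness: changing the chosen point of $(\EE_{G^\R})_x$, or applying an element of $\GGG^\R$, conjugates $\widehat{\rho}$ by an element of $G^\R$, so the construction descends to a well-defined map to $\Rr_d(G^\R,\sigma_X,\sigma_G,c,\pm)$; here one uses that $\GGG^\R$ consists of $\sigma_{\EE_{G^\R}}$-compatible gauge transformations, which is exactly the condition that the conjugating element lie in $G^\R$ rather than in the larger group.

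Finally I would construct the inverse. Given a $(\sigma_X,\sigma_G,c,\pm)$-compatible representation $\widehat{\rho}$ whose restriction $\rho$ to $\pi_1(X,\,x)$ is reductive, the classical correspondence reconstructs a reductive flat $G^\R$-bundle $(\EE_{G^\R},\,D)$ from $\rho$; the value of $\widehat{\rho}$ on a chosen path class then determines a lift of $\sigma_X$ to $\EE_{G^\R}$, and the homomorphism property of $\widehat{\rho}$ translates back into the identities $\sigma_{\EE_{G^\R}}(eg)\,=\,\sigma_{\EE_{G^\R}}(e)\sigma_G(g)$ and $\sigma^2_{\EE_{G^\R}}\,=\,c$, together with the compatibility of $\tau^{\frac{1}{2}\mp\frac{1}{2}}(D)$, so that $(\sigma_{\EE_{G^\R}},\,D)\,\in\,\PPP^\pm$. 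I expect the main obstacle to be precisely the bookkeeping in the homomorphism check of the previous paragraph: one must track how $\sigma_{\EE_{G^\R}}$ commutes past parallel transport—which requires the flatness of $D$ and the compatibility condition defining $\PPP^\pm$—and confirm that the two sources of twisting, the automorphism $\sigma_G\tau^{\frac{1}{2}\mp\frac{1}{2}}$ and the central element $c$, assemble into the group law of $\widehat{G}^\R_\pm$ without cocycle discrepancies, while checking that reductivity is preserved in both directions.
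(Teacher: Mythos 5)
Your proposal follows essentially the same route as the paper: extend the ordinary holonomy to $\Gamma(X,\,x)$ by composing parallel transport along a path class with the lift $\sigma_{\EE_{G^\R}}$, check that the anti-equivariance of $\sigma_{\EE_{G^\R}}$, the compatibility of $\tau^{\frac{1}{2}\mp\frac{1}{2}}(D)$ and the relation $\sigma_{\EE_{G^\R}}^2\,=\,c$ reproduce the twisted group law of $\widehat{G}^\R_\pm$, and invert by reconstructing the flat bundle from $\rho$ and the lift from $\widehat{\rho}(\gamma)$. The paper delegates the homomorphism and extension checks to \cite[Propositions 4.3 and 4.4]{BGH:98}, whereas you sketch them directly, but the construction and the bookkeeping are the same.
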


\begin{proof}
Recall that the holonomy representation $\rho \,:\,\pi_{1}(X,x)\,\longrightarrow\, G^\R$ induced by
a flat connection $D$ on $\EE_{G^\R}$ is 
defined as follows: let $\gamma\,\in\,\pi_{1}(X,x)$, there is a lift 
$\widetilde{\gamma}\,:\,[0,\,1]\,\longrightarrow\, \EE_{G^\R}$ of $\gamma$ 
such that $D(\widetilde{\gamma}_{*}\frac{\partial}{\partial t})=0.$ Let $e\,=\,\widetilde{\gamma}(0).$
If $\widetilde{\gamma}(1)\,=\,eg$, then $\rho(\gamma)\,:=\,g.$

Let $(\sigma_{\EE_G^\R},\,D)\,\in\, \PPP^\pm$ where $D$ is reductive flat connection.
 We define a homomorphism 
$\widehat{\rho}\,=\,\hol(D)\,:\,\Gamma(X,x)\,\longrightarrow\,\widehat{G}^\R_\pm $ 
 in the following way:
${\widehat{\rho}}|_{\pi_{1}(X,x)}\,:=\,\rho.$ Let $\gamma\,\in\,
\Gamma(X,\,x)\setminus \pi_1(X,\,x)$, and let $e$ be an element in the fiber 
$\EE_{G^\R}$ at $x$. 
Let $e'\,\in\, \EE_{G^\R}$ be the point obtained by parallel transport by $D$ 
of $e$ 
along $\gamma$. Let $g_{\gamma}\,\in\, G^\R$ be the element of the group such that 
\begin{equation}\label{eq:transporte}
e\,=\,f(e')\sigma_{G}(g_{\gamma})\,,
\end{equation}
where $f\,:\,\EE_{G^\R}\,\longrightarrow\, \sigma_{X}^{*}\sigma_{G}(\EE_{G^\R})$ 
is the isomorphism of
principal $G^\R$-bundles induced by $\sigma_{\EE_{G^\R}}.$ 
We define $$\widehat{\rho}(\gamma)\,:=\,g_{\gamma}\, ,$$ for every 
$\gamma\,\in\,\Gamma(X,\,x)\setminus \pi_{1}(X,\,x).$ 
Following the same arguments as
in the proof of \cite[Proposition 4.4]{BGH:98}, one
shows that $\widehat{\rho}$ is a $(\sigma_X,\,\sigma_{G},\,c,\,\pm)$-compatible representation.
Equivalence classes of $(\sigma_{\EE_G^\R},\,D)\in \PPP^\pm$ where $D$ is reductive flat connection correspond to equivalence classes of 
$(\sigma_X,\sigma_{G}, c, \pm)$-compatible representations
since $\hol(D)$ satisfies that $\hol(D)(eg)\,=\, g\hol(D)(e)g^{-1}$
for all $g\,\in\, G^\R$ and for all $e$ in the fiber of $\EE_{G^\R}$ at the point $x$. 

Conversely, let $\widehat{\rho}\,:\,
\Gamma(X,\, x)\,\longrightarrow\, \widehat{G}^\R_\pm$ 
be a $(\sigma_X,\sigma_{G},c,\pm)$-compatible representation. 
The induced representation $\rho\,:\,\pi_1(X,\,x)\,\longrightarrow\, G^\R$ 
corresponds to a flat $G^\R$-bundle $(\EE_{G^\R},\,D)$. A map 
$f\,:\,\EE_{G^\R}\,\longrightarrow\, 
\sigma_{X}^{*}\sigma_{G}(\EE_{G^\R})$ can be constructed using 
(\ref{eq:transporte}) and it can be 
extended to the other fibers as is done in \cite[p. 18]{BGH:98}. From \cite[Proposition 
4.3]{BGH:98}, the morphism $f$ defines a pair
$(\sigma_{\EE_{G^\R}},\,D)\,\in\, \PPP^\pm$. The $\sigma_{\EE_{G^\R}}$-compatibility of 
$\tau^{\frac{1}{2}\mp\frac{1}{2}}(D)$ follows from the 
$(\sigma_X,\,\sigma_{G},\,c,\,\pm)$-compatibility of $\widehat{\rho}$.
\end{proof}

Combining Propositions \ref{holonomy-pm} and \ref{corlette-pm} and Theorem
\ref{th2:HKreal} we have now the following non-abelian Hodge correspondence for real $G^\R$-Higgs bundles. 

\begin{The}\label{th:non-abelian}
There is a homeomorphism 
$$
\Rr_d(G^\R,\,\sigma_X,\,\sigma_{G},\,c,\,\pm)\,\cong\, \Mm_d(G^\R,\,\sigma_{X},\,\sigma_{G},\,c,\,\pm)\, .
$$
\end{The}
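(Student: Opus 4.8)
The plan is to produce the homeomorphism as the composition of the three correspondences established immediately above, so that it becomes the real-locus analogue of the chain yielding the ordinary non-abelian Hodge correspondence \eqref{na-hodge}. First I would invoke Theorem \ref{th2:HKreal}, the Hitchin--Kobayashi half, which identifies the moduli space $\Mm_d(G^\R,\sigma_X,\sigma_G,c,\pm)$ of polystable $(\sigma_X,\sigma_G,c,\pm)$-real $G^\R$-Higgs bundles with the moduli space $\Mg(G^\R,\sigma_X,\sigma_G,c,\pm)$ of $(\sigma_X,\sigma_G,c,\pm)$-real solutions of the Hitchin equations \eqref{hitchin} modulo the real gauge group $\HHH^\R$. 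This in turn rests on Theorem \ref{th:Hitchin--Kobayashi} specialized to $L=K_X$, $\rho=\iota$ and $\alpha=0$.

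Next I would pass from the Hitchin equations to flat connections using Proposition \ref{corlette-pm}, the real form of the Donaldson--Corlette theorem, which gives a bijection between $\Mg(G^\R,\sigma_X,\sigma_G,c,\pm)$ and the $\GGG^\R$-orbits of pairs $(\sigma_{\EE_{G^\R}},D)\in\PPP^\pm$ with $D$ reductive flat. The decisive point, already isolated in the discussion preceding that proposition, is that the semisimplicity of $G^\R$ makes the harmonic section supplied by \eqref{corlette} unique, whence it must be fixed by $\sigma_{\EE_{G^\R}}$; this is what transports the reality structure across the correspondence \eqref{hitchin-harmonicity} relating $(A,\varphi)$ to $(A,\psi:=\varphi-\tau(\varphi))$. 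Finally I would apply Proposition \ref{holonomy-pm}, whose holonomy construction identifies the $\GGG^\R$-orbits of reductive flat pairs in $\PPP^\pm$ with the variety $\Rr_d(G^\R,\sigma_X,\sigma_G,c,\pm)$ of $G^\R$-conjugacy classes of $(\sigma_X,\sigma_G,c,\pm)$-compatible representations $\widehat{\rho}\colon\Gamma(X,x)\to\widehat{G}^\R_\pm$ with reductive restriction to $\pi_1(X,x)$. Composing the three identifications yields the stated bijection $\Rr_d\cong\Mm_d$, with the fixed topological class $d\in\pi_1(H)\cong\pi_1(H^\R)$ preserved throughout.

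The step requiring the most care is upgrading this composite from a bijection to a homeomorphism, because Propositions \ref{corlette-pm} and \ref{holonomy-pm} are phrased only as bijections while Theorem \ref{th2:HKreal} is already a homeomorphism. The plan here is to topologize each real moduli space through its gauge-theoretic description (the quotient topology on the relevant spaces of triples or pairs by $\HHH^\R$ or $\GGG^\R$) and to observe that the real Donaldson--Corlette map and the real holonomy map are precisely the restrictions to the $(\sigma_X,\sigma_G,c,\pm)$-real data of the two maps that constitute the non-real homeomorphism \eqref{na-hodge}. Each of those is continuous with continuous inverse in the analytic/gauge-theoretic topologies, and this continuity is inherited on the real loci because the reality structures $\sigma_{\EE_{G^\R}}$ enter as continuously varying parameters and the harmonic-metric and holonomy constructions depend continuously on all their inputs.

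The main obstacle I anticipate is exactly this continuity upgrade for the real Donaldson--Corlette identification: one must verify that the unique harmonic section depends continuously on the triple \emph{together with} its involution, so that the passage from $(A,\varphi)$ to the reductive flat pair $(\sigma_{\EE_{G^\R}},D)$ and back is bicontinuous, and that the forgetful maps of the real diagrams embed compatibly into the corresponding diagram underlying \eqref{na-hodge}. Once that compatibility is checked, continuity in both directions follows from the known homeomorphism in the non-real theory \cite{GGM:21}, and the composite of the three homeomorphisms furnishes the desired homeomorphism $\Rr_d(G^\R,\sigma_X,\sigma_G,c,\pm)\cong\Mm_d(G^\R,\sigma_X,\sigma_G,c,\pm)$.
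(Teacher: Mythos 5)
Your proposal follows exactly the paper's proof, which simply composes Theorem \ref{th2:HKreal}, Proposition \ref{corlette-pm} and Proposition \ref{holonomy-pm} to obtain the correspondence. Your additional discussion of upgrading the two bijections to homeomorphisms is a sound precaution that the paper itself leaves implicit, but it does not change the route.
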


\begin{Rem}
Theorem \ref{th:non-abelian} generalizes the result
in \cite{BGH:99} for a complex reductive Lie group $G$, since $G$ can be viewed as a real form of $G\times G,$
as mentioned in the introduction.
\end{Rem}

\section{Involutions of moduli spaces} \label{Section5}

Consider the same setup and notation of Section \ref{Section4}.

\subsection{Involutions of Higgs bundle moduli spaces}

Let $\Mm(G^\R)$ be the moduli space of polystable $G^\R$-Higgs bundles 
over $X$. The $(\sigma_X,\sigma_G,c,\pm)$-real $G^\R$-Higgs bundles studied in
Section \ref{Section4} appear in a natural way as fixed points 
of certain involutions on the moduli space $\Mm(G^\R)$, in a similar way as they do in the case when $G^\R$ is complex, as studied in \cite{BGP}, whose approach we follow closely. Recall that $\sigma_X$ is a conjugation of $X$, $\sigma_G$ is a conjugation of $G$ satisfying the 
conditions given at the beginning of Section \ref{Section4}, and 
$c\,\in \,Z_\iota^{\sigma_G}$, where $Z_\iota^{\sigma_G}$ is defined by 
(\ref{eq:intersection}).

Let $(E,\,\varphi)$ be a $G^\R$-Higgs bundle on $X$. Let $\sigma_G(E)$ be the
$C^\infty$ principal $H$-bundle on $X$ obtained by extending the structure 
group of $E$ using the conjugation $\sigma_G\,:\,H\,\longrightarrow\, H$. Since $\sigma_X$ is 
antiholomorphic, the pullback
$\sigma_X^*\sigma_G(E)$ is a holomorphic $H$-bundle over $X$. 

Let
$$
\widetilde{\sigma_G}\, :\, E({\mathfrak m})\, \longrightarrow\,E({\mathfrak m})
$$
be the conjugate linear isomorphism that sends the equivalence class of
any $(e\, , v)\, \in\, E\times {\mathfrak m}$ to the equivalence class of 
$(e\, , \sigma_G (v))$. Let $\sigma_G(\varphi)$ be the
$C^\infty$ section of $E({\mathfrak m})\otimes K_X$ defined by 
$\widetilde{\sigma_G}$
and the antiholomorphic involution $\sigma_{K_X}: \,K_X\, \longrightarrow\, K_X$ induced by $\sigma_X$. We have the following involutions.

\begin{equation}\label{eq:ecuacionesp122}
\mapnormal{\Mm(G^\R)}{\Mm(G^\R)}{(E,\varphi)}{(\sigma_{X}^{*}\sigma_{G}(E),\pm\sigma_{X}^{*}\sigma_{G}(\varphi))\,.}{\iota_{\Mm}(\sigma_{X},\sigma_{G})^{\pm}}
\end{equation}

As a consequence of Corollary \ref{Cor:realpolypoly}, there is a forgetful map
$$\mapnormal{\Mm(G^\R,\sigma_{X},\sigma_{G},c, \pm)}{\Mm(G^\R)}{(E,\varphi, \sigma_{E})}{(E,\varphi)\,.}{f_{\Mm}}$$
We denote by $\widetilde{\Mm}(G^\R,\sigma_{X},\sigma_{G},c,\pm)$ the image of $f_{\Mm}$.

\begin{Pro}\label{pro:pseudoreales} The fixed points of $\iota_{\Mm}(\sigma_{X},\sigma_{G})^{\pm}$ and the moduli spaces 
of polystable $(\sigma_X,\,\sigma_G,\,c,\,\pm)$-real $G^\R$-Higgs bundles are 
related as follows.
\begin{enumerate}
\item $$\Mm(G^\R)^{\iota_{\Mm}(\sigma_{X},\sigma_{G})^{\pm}}\,\supseteq\, 
\bigcup_{c\in Z_\iota^{\sigma_G}}
\widetilde{\Mm}(G^\R,\sigma_{X},\sigma_{G},c,\pm)\,.$$

\item For $g(X)\geq 2$ and if we restrict the involution to the subvariety $\Mm_{\sm}(G^R)\,\subset\,\cM(G^\R)$ of stable
and simple $G^\R$-Higgs bundles, then
$$\Mm(G^\R)_\sm^{\iota_{\Mm}(\sigma_{X},\sigma_{G})^{\pm}}
\subseteq \bigcup_{c\in Z_\iota^{\sigma_G}}\widetilde{\Mm}(G^\R,\sigma_{X},\sigma_{G},c,\pm)\,.$$
\end{enumerate}
\end{Pro}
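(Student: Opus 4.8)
The plan is to prove the two inclusions separately: the first for all polystable objects and the second only on the stable and simple locus in genus at least two, following closely the strategy used for complex $G^\R$ in \cite{BGP}. Throughout, the organizing principle is that a real structure $\sigma_E$ is the same datum as a holomorphic identification of $(E,\varphi)$ with its image under $\iota_{\Mm}(\sigma_X,\sigma_G)^{\pm}$, so that the two inclusions become, respectively, the production of such an identification from $\sigma_E$ and the reconstruction of $\sigma_E$ from such an identification.

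For part (1) I would start from a polystable $\ssigma$-real $G^\R$-Higgs bundle $(E,\varphi,\sigma_E)$ and show directly that its image $(E,\varphi)$ under $f_{\Mm}$ is fixed by $\iota_{\Mm}(\sigma_X,\sigma_G)^{\pm}$. The key observation is that $\sigma_E$, being anti-holomorphic, covering $\sigma_X$, and satisfying $\sigma_E(eg)=\sigma_E(e)\sigma_G(g)$, is exactly the datum of a holomorphic isomorphism $\Phi\colon E\to\sigma_X^*\sigma_G(E)$ of principal $H$-bundles, obtained by composing $\sigma_E$ with the tautological anti-holomorphic identification of $\sigma_X^*\sigma_G(E)$ with $E$. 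The reality condition \eqref{eq2:realhiggs} on $\varphi$ then translates precisely into $\Phi_*\varphi=\pm\sigma_X^*\sigma_G(\varphi)$, so $\Phi$ is an isomorphism of $G^\R$-Higgs bundles $(E,\varphi)\to(\sigma_X^*\sigma_G(E),\pm\sigma_X^*\sigma_G(\varphi))$. Hence $[(E,\varphi)]$ lies in $\Mm(G^\R)^{\iota_{\Mm}(\sigma_X,\sigma_G)^{\pm}}$, and this gives the inclusion $\supseteq$ for every $c\in Z_\iota^{\sigma_G}$.

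For part (2) I would run this argument in reverse. Starting from a stable and simple $G^\R$-Higgs bundle $(E,\varphi)$ fixed by the involution, choose an isomorphism $\Phi\colon(E,\varphi)\to(\sigma_X^*\sigma_G(E),\pm\sigma_X^*\sigma_G(\varphi))$ and let $\sigma_E\colon E\to E$ be the associated anti-holomorphic lift of $\sigma_X$; by construction $\sigma_E(eg)=\sigma_E(e)\sigma_G(g)$ and $\varphi$ satisfies \eqref{eq2:realhiggs}. The composite $\sigma_E^2$ is then a holomorphic automorphism of $(E,\varphi)$ covering $\sigma_X^2=\id_X$, i.e.\ a global gauge transformation commuting with $\varphi$. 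Here stability and simplicity (with $g(X)\ge2$ guaranteeing that the stable simple locus is well behaved and that the automorphism analysis of Proposition \ref{pro:aut} applies) enter: the automorphism group of $(E,\varphi)$ reduces to $Z(H)\cap\Ker(\iota)$, so $\sigma_E^2$ equals right translation $R_c$ by a constant element $c\in Z(H)\cap\Ker(\iota)$.

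It then remains to show that $c$ may be taken in $Z_\iota^{\sigma_G}=Z(H)_2^{\sigma_G}\cap\Ker(\iota)$, after which $\sigma_E$ exhibits $(E,\varphi,\sigma_E)$ as a $(\sigma_X,\sigma_G,c,\pm)$-real $G^\R$-Higgs bundle over $(E,\varphi)$, yielding $\subseteq$. That $\sigma_G(c)=c$ follows by comparing the two factorizations of $\sigma_E^3$: from $\sigma_E\circ\sigma_E^2$ one gets $e\mapsto\sigma_E(e)\sigma_G(c)$, while from $\sigma_E^2\circ\sigma_E$ one gets $e\mapsto\sigma_E(e)c$, forcing $\sigma_G(c)=c$; and $c\in\Ker(\iota)$ holds by construction. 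The delicate point, which I expect to be the main obstacle, is the order-two condition $c^2=1$. The remaining freedom in $\Phi$ must be used: replacing $\sigma_E$ by $e\mapsto\sigma_E(e)\lambda$ with $\lambda\in Z(H)\cap\Ker(\iota)$ changes $c$ into $c\,\lambda\,\sigma_G(\lambda)$, i.e.\ by the image of the norm $N(\lambda)=\lambda\,\sigma_G(\lambda)$ of the $\mathbb{Z}/2\mathbb{Z}$-action determined by $\sigma_G$. Thus $c$ is well defined only as a class in $(Z(H)\cap\Ker(\iota))^{\sigma_G}/N(Z(H)\cap\Ker(\iota))$, which is annihilated by $2$, and the task is to select a representative of order two within this class. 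I anticipate this follows from the fact that $\sigma_G$, being a conjugation, acts on the finite part of $Z(H)\cap\Ker(\iota)$ compatibly with inversion, so that $\sigma_G$-invariance already forces $c$ to be $2$-torsion; this is exactly the normalization carried out for the complex case in \cite{BGP} and for pseudo-real bundles in \cite{BGH:99}, and the same bookkeeping applies here. Finally I would note that, by Corollary \ref{Cor:realpolypoly}, the normalized $\sigma_E$ is compatible with a polystable reduction, completing the identification of the fixed point with a point of $\widetilde{\Mm}(G^\R,\sigma_X,\sigma_G,c,\pm)$.
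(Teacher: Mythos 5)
Your proposal reproduces the paper's argument essentially step for step: part (1) is the observation that $\sigma_E$ is the same datum as a holomorphic isomorphism $f\colon E\to\sigma_X^*\sigma_G(E)$ carrying $\varphi$ to $\pm\sigma_X^*\sigma_G(\varphi)$, and part (2) reverses this, using simplicity to identify $\Aut(E,\varphi)$ with $Z(H)\cap\Ker(\iota)$, setting $c=\sigma_X^*\sigma_G(f)\circ f$, and deducing $\sigma_G(c)=c$ from the two factorizations of $\sigma_E^3$. The one place you go beyond the paper is the order-two condition on $c$, which you rightly single out as the delicate step; the paper itself simply asserts that $\sigma_G(c)=c$ places $c$ in $Z_2^{\sigma_G}\cap\Ker(\iota)$. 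Be aware, however, that your proposed mechanism is not correct as stated: a conjugation need not act on the finite centre by inversion (the compact conjugation of $\SL(4,\CC)$ fixes its centre $\Z/4\Z$ pointwise, and it is an admissible choice of $\sigma_G$ here), so $\sigma_G$-invariance alone does not force $c^2=1$; and the normalization freedom $c\mapsto c\,\lambda\,\sigma_G(\lambda)$ only shows that the class of $c$ modulo norms is killed by $2$, not that it admits a representative of order two (in $\Z/4\Z$ with trivial action the nontrivial class modulo squares has no $2$-torsion representative). So your instinct that this step needs an argument is sound, but the argument you sketch does not close it; it is a point the paper's own proof also leaves unaddressed.
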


\begin{proof}
(1) If $(E,\varphi)$ 
is the image of $(E,\varphi,\sigma_E)\,\in\, \Mm(G^\R,\sigma_{X},\sigma_{G},c,\pm) $, then
there is a holomorphic isomorphism of bundles $f\,:\, E
\,\longrightarrow\, \sigma_{X}^{*}\sigma_{G}(E)$ induced by $\sigma_E$, and this
defines a map $\widetilde{f}\,:\,E(\liem)\otimes K_{X}\,\longrightarrow\, 
\sigma_{X}^{*}\sigma_{G}(E)(\liem)\otimes K_{X}$. We thus have

$$
\iota_{\Mm}(\sigma_{X},\sigma_{G})^{\pm}(E,\varphi)\cong (f(E),
\pm \widetilde{f}(\varphi))\cong (E,\varphi)\,.
$$
	 
(2) Let $(E,\,\varphi)\,\in\, \Mm(G^\R)_\sm^{\iota_{\Mm}(\sigma_{X},\sigma_{G})^{\pm}}$. 
There is an isomorphism $f\,:\, E\,\longrightarrow\, \sigma_{X}^{*}\sigma_{G}(E)\,,$
such that $(f(E),\,\pm \widetilde{f}(\varphi))\,\cong\, (E,\,\varphi)\,.$ The composition $\sigma_{X}^{*}\sigma_{G}(f) \,\circ\,f$ belongs to
the group $\Aut(E,\,\varphi)$ of automorphisms of $(E,\,\varphi)$. Since $(E,\,\varphi)$ is assumed to be
simple, this group coincides with $Z(H)\cap \Ker(\iota)$. Let 
$c\,:=\,\sigma_{X}^{*}\sigma_{G}(f)\circ f \,\in\, Z(H)\cap \Ker(\iota).$ 
Since $f$ commutes with $\sigma_{X}^{*}\sigma_{G}(f)\circ f$, we have
that $\sigma_{G}(c)\,=\,c.$ Therefore, $c\,\in\, Z_{2}^{\sigma_{G}}\cap \ker(\iota)$,
and $f$ defines a $(\sigma_{X},\sigma_{G},c)$-real structure $\sigma_{E}$ on $E$. 
For the real structure $\sigma_{E(\liem)}\otimes \sigma_{K_X}$ induced by $\sigma_{E}$ and $\sigma_{X}$ 
we have that $\sigma_{E(\liem)}\otimes \sigma_X(\varphi)\,=\,\pm\varphi$, since by hypothesis 
$\varphi\,\cong\, \pm\widetilde{f}(\varphi).$
\end{proof}

\subsection{Involutions of character varieties}

We study now the involutions of the character variety $\calR(G^\R)$ corresponding to the involutions
\eqref{eq:ecuacionesp122} of 
$\cM(G^\R)$ via the non-abelian Hodge correspondence (\ref{na-hodge}). This generalizes the case in which
$G^\R$ is complex treated in \cite{BGP}, which again we follow closely.

Fix a point $x\,\in\, X$. The involution $\sigma_X$ of $X$ produces an
involutive isomorphism
$$
{(\sigma_X)}_*\, :\, \pi_1(X, x)\, \longrightarrow\, \pi_1(X, \sigma_X(x))\, .
$$
This in turn gives an involution 
\begin{equation}\label{bhm}
{(\sigma_X)}_*\, :\, \Hom^{+}(\pi_1(X,x),G^\R) / G^\R \, \longrightarrow\, \Hom^{+}(\pi_1(X,
\sigma_X(x)),G^\R) / G^\R \, ,
\end{equation}
which, abusing notation, we are also denoting by $(\sigma_X)_*$.
As mentioned above, $\calR(G^\R) \,=\, \Hom^{+}(\pi_1(X,x),G^\R) / G^\R$ is independent
of the choice of the base point. 

Let $\sigma$ be an involution of $G^\R$. This defines an involution (denoted also
by $\sigma$)
$$
\sigma\, : \, \calR(G^\R)\, \longrightarrow\,
\calR(G^\R)
$$
given by $\rho\, \longmapsto\, \sigma\circ\rho$.
In other words, $\sigma$ sends a homomorphism $\rho\, :\, \pi_1(X,x)\,\longrightarrow\, G^\R$
to the composition
$$
\pi_1(X,x)\,\stackrel{\rho}{\longrightarrow}\, G^\R\,\stackrel{\sigma_G}{\longrightarrow}
\, G^\R\, .
$$
Clearly this involution commutes with the involution $(\sigma_X)_*$ in \eqref{bhm}. Therefore,
$\sigma\circ\ {(\sigma_X)}_*$ is also an involution. 

One has the following.

\begin{Pro}\label{rep-involutions}
Let 
$\iota_{\Rr}(\sigma_{X},\,\sigma_{G})^{\pm}$ be the map given by
$$
 \mapnormal{\Rr(G^\R)}{\Rr(G^\R)}{\rho}{\sigma_{G} \circ \tau^{\frac{1}{2}\mp\frac{1}{2}}\circ\rho \circ
(\sigma_{X})_{*}}{\iota_{\Rr}(\sigma_{X},\sigma_{G})^{\pm}}.
$$
Then $\iota_{\Rr}(\sigma_{X},\sigma_{G})^{\pm}$ is an involution of $\calR(G^\R)$ and the following diagram commutes:
\[
\xymatrix{
\Mm(G^\R) \ar[rr]^{\cong} \ar[d]_{\iota_{\Mm}(\sigma_{X},\sigma_{G})^{\pm}} & & \Rr(G^\R) \ar[d]^{\iota_{\Rr}(\sigma_{X},\sigma_{G})^{\pm}}
\\
\Mm (G^\R)\ar[rr]^{\cong} & & \Rr(G^\R).
}
\]
\end{Pro}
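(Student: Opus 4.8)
The plan is to prove the two assertions in turn: first that $\iota_{\Rr}(\sigma_X,\sigma_G)^{\pm}$ is a well-defined involution of $\calR(G^\R)$, and then that the non-abelian Hodge correspondence \eqref{na-hodge} intertwines it with $\iota_{\Mm}(\sigma_X,\sigma_G)^{\pm}$. Throughout I would write $\theta_{\pm}\,:=\,\sigma_G\circ\tau^{\frac{1}{2}\mp\frac{1}{2}}$, so that $\theta_{+}\,=\,\sigma_G$ and $\theta_{-}\,=\,\sigma_G\circ\tau$.

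For the first assertion I would note that, since $\sigma_G$ is $(\mu,\tau)$-compatible (Example \ref{isotropyrep}), it commutes with $\mu$ and with $\tau$; as $\tau$ also commutes with $\mu$, both $\sigma_G$ and $\tau$ preserve $G^\R\,=\,G^{\mu}$ and restrict there to commuting involutive automorphisms of the real group $G^\R$. Hence $\theta_{\pm}$ restricts to an automorphism of $G^\R$, so $\iota_{\Rr}(\sigma_X,\sigma_G)^{\pm}(\rho)\,=\,\theta_{\pm}\circ\rho\circ(\sigma_X)_{*}$ again takes values in $G^\R$; replacing $\rho$ by $g\rho g^{-1}$ with $g\in G^\R$ conjugates it by $\theta_{\pm}(g)\in G^\R$, so the map descends to $\calR(G^\R)$, and it preserves reductivity because $\theta_{\pm}$ and $(\sigma_X)_{*}$ are automorphisms. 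The involution property follows from $\sigma_G^{2}\,=\,\id$, $\tau^{2}\,=\,\id$, $\sigma_G\tau\,=\,\tau\sigma_G$ and $(\sigma_X)_{*}^{2}\,=\,\id$: one computes $(\iota_{\Rr}(\sigma_X,\sigma_G)^{\pm})^{2}(\rho)\,=\,(\theta_{\pm}\circ\theta_{\pm})\circ\rho\circ(\sigma_X)_{*}^{2}$, where $\theta_{+}\circ\theta_{+}\,=\,\sigma_G^{2}\,=\,\id$ while $\theta_{-}\circ\theta_{-}\,=\,\sigma_G\tau\sigma_G\tau\,=\,\sigma_G^{2}\tau^{2}\,=\,\id$.

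For the commuting square I would track a point through the correspondence. Let $(E,\varphi)$ be polystable with image $\rho\,\in\,\calR(G^\R)$ under \eqref{na-hodge}: by Theorem \ref{th:HKreal} there is a harmonic metric $h$ solving the Hitchin equation \eqref{eq:HEhiggs}, yielding the connection $A$ on the reduced $H^\R$-bundle, the flat connection $D\,=\,d_A+\psi$ with $\psi\,=\,\varphi-\tau_h(\varphi)$ (cf. \eqref{connection-decomposition}, \eqref{hitchin-harmonicity}), and $\rho\,=\,\hol(D)$. The key point is that a harmonic metric for $\iota_{\Mm}(\sigma_X,\sigma_G)^{\pm}(E,\varphi)\,=\,(\sigma_X^{*}\sigma_G(E),\,\pm\sigma_X^{*}\sigma_G(\varphi))$ is $\sigma_X^{*}\sigma_G(h)$. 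Indeed the operation $\sigma_X^{*}\sigma_G$ carries solutions of \eqref{eq:HEhiggs} to solutions: the sign $\pm$ is irrelevant because $[\varphi,\tau_h(\varphi)]$ is unchanged under $\varphi\mapsto-\varphi$, and the invariance of the equation uses that $\omega$ is $\sigma_X$-real, that $B$ is $d\sigma_G$-invariant, and that $\tau$ commutes with $\sigma_G$, the last of which gives the crucial identity $\tau_{\sigma_X^{*}\sigma_G(h)}\circ\sigma_X^{*}\sigma_G\,=\,\sigma_X^{*}\sigma_G\circ\tau_h$. Since \eqref{na-hodge} is a bijection, the class of $\iota_{\Mm}(\sigma_X,\sigma_G)^{\pm}(E,\varphi)$ is computed from the flat connection built out of \emph{any} harmonic metric, and $\sigma_X^{*}\sigma_G(h)$ is one.

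With $A'\,=\,\sigma_X^{*}\sigma_G(A)$ and $\psi'\,=\,\pm\sigma_X^{*}\sigma_G(\varphi)-\tau_{\sigma_X^{*}\sigma_G(h)}(\pm\sigma_X^{*}\sigma_G(\varphi))$, the identity $\tau_{\sigma_X^{*}\sigma_G(h)}\circ\sigma_X^{*}\sigma_G\,=\,\sigma_X^{*}\sigma_G\circ\tau_h$ gives $\psi'\,=\,\pm\sigma_X^{*}\sigma_G(\psi)$, so the new flat connection is $D'\,=\,d_{A'}+\psi'$. For the sign $+$ this is $\sigma_X^{*}\sigma_G(d_A+\psi)\,=\,\sigma_X^{*}\sigma_G(D)$; for the sign $-$, using that $A$ is an $H^\R$-connection (so $\tau$ acts as $+1$ on the $\lieh^\R$-valued $d_A$) while $\psi\,\in\,\Omega^{1}(X,\EE_{H^\R}(\liem^\R))$ (so $\tau$ acts as $-1$ on $\psi$), one gets $D'\,=\,\sigma_X^{*}\sigma_G(d_A-\psi)\,=\,\sigma_X^{*}\sigma_G\tau(D)$; in both cases $D'\,=\,\sigma_X^{*}\sigma_G\tau^{\frac{1}{2}\mp\frac{1}{2}}(D)$. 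Applying the automorphism $\theta_{\pm}$ to a flat connection postcomposes its holonomy with $\theta_{\pm}$, while pulling back by $\sigma_X$ precomposes it with $(\sigma_X)_{*}$ (as in Proposition \ref{holonomy-pm}); hence $\hol(D')\,=\,\theta_{\pm}\circ\rho\circ(\sigma_X)_{*}\,=\,\iota_{\Rr}(\sigma_X,\sigma_G)^{\pm}(\rho)$, which is exactly the commutativity of the diagram. The main obstacle is the third paragraph: correctly identifying $\sigma_X^{*}\sigma_G(h)$ as a harmonic metric for the transformed pair and justifying, via the bijectivity of \eqref{na-hodge} together with the uniqueness in the Donaldson--Corlette theorem, that the holonomy may be read off from it; once that is secured, the remaining steps are the bookkeeping of the sign $\pm$ against the compact conjugation appearing in $\psi\,=\,\varphi-\tau_h(\varphi)$.
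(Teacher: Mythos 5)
Your proposal is correct and follows essentially the same route as the paper: the paper's proof also deduces the involution property from the commutation relations among $\sigma_G$, $\tau$, $\mu$ and $(\sigma_X)_*$, and obtains the commutativity of the square by tracking a polystable pair through the flat connection $D=d_A+\varphi-\tau(\varphi)$ and the functorial behaviour of holonomy under pullback by $\sigma_X$ and twisting by $\sigma_G\circ\tau^{\frac{1}{2}\mp\frac{1}{2}}$. Your write-up simply makes explicit the sign bookkeeping and the verification that $\sigma_X^*\sigma_G(h)$ is again a harmonic metric, which the paper leaves implicit.
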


\begin{proof}
The fact that ${\iota_{\Rr}(\sigma_{X},\sigma_{G})^{\pm}}$ is an involution follows from the previous discussion. The commutativity
of the diagram follows immediately from the construction of the non-abelian Hodge correspondence map $\cM(G^\R)\to \calR(G^\R)$.
Recall from Section \ref{Subsection4.1} that if $(E,\varphi)$ is a polystable $G^\R$ Higgs bundle, one associates to it the flat 
$G^\R$-connection $D\,=\,d_A+\varphi-\tau(\varphi)$, where the pair $(A,\,\varphi)$ solves the Hitchin equations (\ref{hitchin}). The result
follows now from the properties of the holonomy map associating to $D$ a representation of $\pi_1(X,\,x)$ in $G^\R$.
\end{proof}

Let $\widetilde{\calR}(G^\R,\sigma_X,\sigma_{G},c,\pm)$ be the image in 
$\calR(G)$ of the map defined by restricting $\widehat{\rho}\,\in \,\calR(G^\R,\sigma_X,\sigma_{G},c,\pm)$ to
$\pi_1(X,\,x)$. Notice that the homeomorphism in
Theorem \ref{th:non-abelian} defines a homeomorphism 
between $\widetilde{\calR}(G^\R,\,\sigma_X,\,\sigma_{G},\,c,\,\pm)$
and $\widetilde{\cM}(G^\R,\sigma_X,\sigma_{G},c,\pm)$, which is indeed the restriction of the homeomorphism $\calR(G^\R)\cong \cM(G^\R)$ given by the non-abelian Hodge correspondence.
As a corollary of Propositions \ref{rep-involutions} and \ref{pro:pseudoreales}, and Theorem \ref{th:non-abelian} we have the following.

\begin{Pro} The fixed points of $\iota_{\calR}(\sigma_{X},\sigma_{G})^{\pm}$ and the moduli spaces 
of $(\sigma_X,\,\sigma_G,\,c,\,\pm)$-compatible representations of 
$\Gamma(X,\, x)$ in $\widehat{G}^\R_\pm$ 
are related as follows.
\begin{enumerate}
\item $$\calR(G^\R)^{\iota_{\calR}(\sigma_{X},\sigma_{G})^{\pm}}\supseteq 
\bigcup_{c\in Z_\iota^{\sigma_G}\cap Z(G^\R)}
\widetilde{\calR}(G^\R,\sigma_{X},\sigma_{G},c,\pm)\,.$$

\item For $g(X)\geq 2$ and if we restrict the involution to the subvariety $\calR_{\irr}(G^R)\,\subset\,\calR(G^\R)$ of 
irreducible representations, then
$$\calR_\irr(G^\R)^{\iota_{\calR}(\sigma_{X},\sigma_{G})^{\pm}}
\subseteq \bigcup_{c\in Z_\iota^{\sigma_G}\cap Z(G^\R)}\widetilde{\calR}(G^\R,\sigma_{X},\sigma_{G},c,\pm)\,.$$
\end{enumerate}
\end{Pro}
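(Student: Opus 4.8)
The plan is to transport both inclusions from the moduli space of Higgs bundles to the character variety through the non-abelian Hodge correspondence, as the statement of the proposition indicates. Write $\Phi\,:\,\Mm(G^\R)\,\cong\,\calR(G^\R)$ for the homeomorphism of \eqref{na-hodge}. By the commutative square in Proposition \ref{rep-involutions}, $\Phi$ intertwines $\iota_{\Mm}(\sigma_X,\sigma_G)^{\pm}$ with $\iota_{\calR}(\sigma_X,\sigma_G)^{\pm}$, so it restricts to a homeomorphism of fixed-point loci
$$
\Phi\,:\,\Mm(G^\R)^{\iota_{\Mm}(\sigma_X,\sigma_G)^{\pm}}\,\xrightarrow{\ \cong\ }\,\calR(G^\R)^{\iota_{\calR}(\sigma_X,\sigma_G)^{\pm}}\,.
$$
Moreover, by Theorem \ref{th:non-abelian} and the remark preceding this proposition, for every $c\,\in\, Z_\iota^{\sigma_G}\cap Z(G^\R)$ the map $\Phi$ carries $\widetilde{\Mm}(G^\R,\sigma_X,\sigma_G,c,\pm)$ onto $\widetilde{\calR}(G^\R,\sigma_X,\sigma_G,c,\pm)$. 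These two facts reduce everything to the Higgs-bundle statement of Proposition \ref{pro:pseudoreales}.

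For part (1), fix $c\,\in\, Z_\iota^{\sigma_G}\cap Z(G^\R)$; in particular $c\,\in\, Z_\iota^{\sigma_G}$, so Proposition \ref{pro:pseudoreales}(1) gives $\widetilde{\Mm}(G^\R,\sigma_X,\sigma_G,c,\pm)\,\subseteq\, \Mm(G^\R)^{\iota_{\Mm}(\sigma_X,\sigma_G)^{\pm}}$. Applying $\Phi$ and using the two displayed facts above yields
$$
\widetilde{\calR}(G^\R,\sigma_X,\sigma_G,c,\pm)\,\subseteq\,\calR(G^\R)^{\iota_{\calR}(\sigma_X,\sigma_G)^{\pm}}\,,
$$
and taking the union over all such $c$ gives the claimed inclusion.

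For part (2), I would transport Proposition \ref{pro:pseudoreales}(2) along $\Phi$, using in addition the standard fact that the non-abelian Hodge correspondence matches stable and simple $G^\R$-Higgs bundles with irreducible representations; together with Proposition \ref{rep-involutions} this means $\Phi$ restricts to an involution-equivariant homeomorphism $\Mm_{\sm}(G^\R)\,\cong\,\calR_{\irr}(G^\R)$ (here $g(X)\,\geq\, 2$ is inherited from Proposition \ref{pro:pseudoreales}(2)). Given $[\rho]\,\in\,\calR_{\irr}(G^\R)^{\iota_{\calR}(\sigma_X,\sigma_G)^{\pm}}$, its preimage $[(E,\varphi)]\,=\,\Phi^{-1}([\rho])$ lies in $\Mm_{\sm}(G^\R)^{\iota_{\Mm}(\sigma_X,\sigma_G)^{\pm}}$, and Proposition \ref{pro:pseudoreales}(2) produces an element $c\,\in\, Z_\iota^{\sigma_G}$ with $[(E,\varphi)]\,\in\,\widetilde{\Mm}(G^\R,\sigma_X,\sigma_G,c,\pm)$.

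The main obstacle, and the only point where the argument is more than a formal transfer, is to upgrade this $c$ from $Z_\iota^{\sigma_G}$ to $Z_\iota^{\sigma_G}\cap Z(G^\R)$, which is precisely the index set over which $\widehat{G}^\R_\pm$ and hence $\widetilde{\calR}(G^\R,\sigma_X,\sigma_G,c,\pm)$ are defined. Here I would invoke irreducibility: the element $c\,=\,\sigma_X^{*}\sigma_G(f)\circ f$ furnished by Proposition \ref{pro:pseudoreales}(2) is an automorphism of $(E,\varphi)$, and under the non-abelian Hodge correspondence the automorphism group of the polystable Higgs bundle $(E,\varphi)$ is identified with the centralizer in $G^\R$ of the image of the corresponding representation $\rho$. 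Since $\rho$ is irreducible, this centralizer equals $Z(G^\R)$, so $c\,\in\, Z(G^\R)$ and therefore $c\,\in\, Z_\iota^{\sigma_G}\cap Z(G^\R)$. With $c$ so constrained, $\Phi$ sends $[(E,\varphi)]$ into $\widetilde{\calR}(G^\R,\sigma_X,\sigma_G,c,\pm)$; taking the union over $c\,\in\, Z_\iota^{\sigma_G}\cap Z(G^\R)$ completes part (2).
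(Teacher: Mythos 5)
Your proposal is correct and follows exactly the route the paper takes: the paper states this result precisely as a corollary of Proposition \ref{rep-involutions}, Proposition \ref{pro:pseudoreales} and Theorem \ref{th:non-abelian}, i.e.\ as a formal transfer along the involution-equivariant non-abelian Hodge homeomorphism, which is what you carry out. The one point you treat in more detail than the paper, upgrading $c$ from $Z_\iota^{\sigma_G}$ to $Z_\iota^{\sigma_G}\cap Z(G^\R)$ in part (2), is resolved correctly by your centralizer argument (and can also be seen directly from $Z(H)\cap\Ker(\iota)\subseteq Z(G)\cap H=Z(G)\cap G^\R=Z(G^\R)$, using that $\tau$ fixes $Z(G)$ pointwise).
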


\begin{Rem}
Recall that a representation in $\calR(G^\R)$ is said to be {\bf irreducible} if the centralizer of the image of $\rho$ in $G^\R$ coincides with the center of $G^\R$. Under the non-abelian Hodge correspondence (\ref{na-hodge}) the subvariety of irreducible representations is in bijection
with the subvariety of stable and simple Higgs bundles (see \cite{GGM:21}).
\end{Rem}

\section*{Acknowledgements}

We thank the referees for their helpful comments to improve the exposition.
The first author is supported by a J. C. Bose Fellowship.
The third author was partially supported by the Spanish MINECO under ICMAT Severo Ochoa project No. SEV-2015-0554, and under grant No. MTM2013-43963-P

\end{document}